%

\input ./style/arxiv-general.cfg
\documentclass[seceqn,aop,MSNbibl,dvips]{arximspdf}
\makeatletter
   \@ifpackageloaded{graphicx}{}{\usepackage{graphicx}}
\makeatother

%

\doi{10.1214/15-AOP1004}
\volume{44}
\issue{2}
\pubyear{2016}
\firstpage{1426}
\lastpage{1487}
\docsubty{FLA}

\makeatletter
\def\sklfrac#1#2{(#1/#2)}
\def\eqref#1{(\ref{#1})}
\newcommand{\rrvert}{\vert}
\newcommand{\rrVert}{\Vert}
\newcommand{\llvert}{\vert}
\newcommand{\llVert}{\Vert}
\newtheorem{theorem}{Theorem}[section]
\newtheorem{lemma}[theorem]{Lemma}
\newtheorem{proposition}[theorem]{Proposition}
\newproclaim{rem}[theorem]{Remark}

\newcommand{\bTm}{\mathbf{T}_{\mathrm{mix}}}

\makeatother

\begin{document}
\begin{frontmatter}

\title{Mixing time and cutoff for the adjacent transposition shuffle
and the simple exclusion}
\runtitle{AT shuffle and exclusion on the segment}

\begin{aug}
\author[A]{\fnms{Hubert}~\snm{Lacoin}\corref{}\ead[label=e1]{lacoin@impa.br}}
\runauthor{H. Lacoin}
\affiliation{IMPA---Instituto Nacional de Matem\'atica Pura e Aplicada}
\address[A]{IMPA---Instituto Nacional\\\quad  de Matem\`atica Pura e Aplicada\\Estrada Dona Castorina 110\\ Rio de Janeiro 22460-320\\ Brasil
\\\printead{e1}}
\end{aug}

%
\received{\smonth{1} \syear{2014}}
%
\revised{\smonth{1} \syear{2015}}

%
\begin{abstract}
In this paper, we investigate the mixing time of the adjacent
transposition shuffle for a deck of $N$ cards.
We prove that around time $N^2\log N/(2\pi^2)$, the total variation
distance to equilibrium of the deck
distribution drops abruptly from $1$ to $0$, and that
the separation distance has a similar behavior but with a transition
occurring at time $(N^2\log N)/\pi^2$.
This solves a conjecture formulated by David Wilson.
We present also similar results for the exclusion process on a segment
of length $N$ with $k$ particles.
\end{abstract}

%
\begin{keyword}[class=AMS]
\kwd{37L60} \kwd{82C20} \kwd{60J10}
\end{keyword}
\begin{keyword}
\kwd{Markov chains} \kwd{mixing time} \kwd{shuffle} \kwd{particle
systems} \kwd{cutoff}
\end{keyword}
\end{frontmatter}

\tableofcontents
\section{Introduction}\label{sec1}

\subsection{A brief history of card shuffling}

Let us consider the following way of shuffling a deck of $N$ cards:
at each step, with probability $1/2$ we interchange the position of a
pair of adjacent cards chosen uniformly at
random (among the $N-1$ possible choices), and with probability $1/2$
we do nothing.
How many steps do we need to perform until the deck has been shuffled?

Even though this shuffling method may be of very little practical use
for card players (indeed the usual
rifle-shuffles allow a much faster mixing of the deck if executed
properly; see \cite{cfBD}),
this question has raised a considerable interest in the domain of
Markov chains for a number of years, since
Aldous \cite{cfAld}, Section~4, proved that $O(N^3 \log N)$ steps were
sufficient to mix the deck and that $\Omega(N^3)$ steps were necessary.
This appears in \cite{cfLPW}, Chapter~23, in a short list of open
problem concerning Markov chains mixing times.

The first reason that can be given for this interest is that it is that
allowing only local moves (i.e., adjacent transpositions) adds a
constraint which makes the problem more challenging than the usual
transposition shuffle; see \cite{cfDS}
for a computation of the mixing time by algebraic methods, \cite
{cfMat} for a simpler probabilistic proof and \cite{cfkcyc} for
a recent paper on the subject with additional results on the evolution
of the cycle structure of the permutation.

The second reason is that shuffling with a geometrical constraint is a
reasonable toy-model to describe the relaxation of a low density gas.
Consider $N$ (labeled) particles in a box with erratic moves and local
interactions. We can now ask ourselves a difficult question:
how much time is needed for the system to forget all the information
about its initial configuration? Of course the adjacent transposition
is an over-simplification of the problem because it is one dimensional,
and the only motion that particles (or cards)
can make is by exchanging their position with
a neighbor, but a solution to the toy problem might give an idea of the
qualitative behavior of the system.
This connection with particle systems becomes more obvious when the
simple exclusion process (which corresponds to the case of unlabeled particles)
is introduced in the next section.

The last substantial progress toward a solution prior to the writing of
this paper was by Wilson \cite{cfWilson}, who proved that
$\frac{1}{\pi^2} N^3 \log N$ steps where necessary and that $\frac
{2}{\pi^2} N^3 \log N$ where sufficient, and conjectured that the
first was the correct answer.\vspace*{2pt}
In this paper we solve this conjecture by showing that the pack is
mixed after $\frac{1}{\pi^2} N^3 \log N(1+o(1))$ steps.

For notational convenience all our results are proved for the
continuous time version
of the Markov chain and the mixing time presented in the theorems
differs by a factor $2N$.
We show how to prove the result in discrete time is the Appendix~\ref{appB}.

\subsection{The exclusion process}

A significant part of the paper is devoted to the study of the mixing
of the exclusion process,
which is a projection of the adjacent transposition shuffle.
The simplest way to describe it is the following: consider a segment
with $N$ sites,
and place $k\in\{1,\dots,N-1\}$
particle on this segment, with \textit{at most} one particle per site.

We consider the following dynamics: each particle jumps independently
with a rate equal to the number of empty sites in its neighborhood, the
site on which it jumps being chosen uniformly at random between these
sites (equivalently it jumps with rate one on each of the empty
neighbors; see Figure~\ref{partisep}
and the next section for a more normal description).
We want to know how long we must wait to come close to the equilibrium
state of the particle system, for which all configurations are equally likely.

This model too has a long history and can be considered in a more
general setup, with an $N\times N$ grid instead of a segment (or a
higher dimensional
cube, or a more general graph), we refer to \cite{cfLiggett}, Section VIII, for a classical introduction.
The problem of computing the mixing time of the exclusion process has
also been well developed in the
case of the complete graph $\mathbb{Z}^d$, grid, torus and of general graphs; see \cite{cfMor,cfLacleb,cfRO} and references therein.

\section{Models and results}

\subsection{The AT shuffle and the total variation cutoff}
Let us now introduce card shuffles in a mathematical framework.
The adjacent transposition shuffle (or AT shuffle) is a continuous time
Markov chain on the symmetric group $S_N$.
We consider that we have a deck of $N$ cards that are labeled from $1$
to $N$.
We number the positions of the cards from top to bottom saying that the
top card has position $1$
and the bottom one $N$. To an array of cards, we associate a
permutation $\sigma$ saying that
$\sigma(x)=y$ if the $x$th position in the pack is occupied by the
card labeled $y$.
Our chain selects a card uniformly at random among those in position
$1$ to $N-1$
and exchanges its position with the one that is immediately below it.

More formally, we let $(\tau_x)_{1\leq x\leq N-1}$ denote the nearest
neighbor transpositions $(x,x+1)$ (note that
the set $\{ \tau_x \vert 1\leq x\leq N-1 \}$ is a generator $S_N$ in
the group-theoretical sense). The generator $\mathcal L$ of the
AT shuffle is defined by its action on the functions of $\mathbb{R}
^\Omega$ as follows:
%
%
\begin{equation}
\label{defgen} (\mathcal L f) (\sigma):=\sum_{x=1}^{N-1}
f( \sigma\circ\tau_x)-f(\sigma).
\end{equation}

Let $(\sigma_t)_{t\geq0}$ denote trajectory of the Markov chain with
initial condition
$\sigma_0=\mathbf{1}$ (the identity) and $P_t$ denote the law of
distribution of the time marginal $\sigma_t$.
Given a probability distribution $\nu$, we define $P_t^\nu$ to be the
marginal distribution of $\sigma^\nu_t$,
the Markov chain starting with initial distribution $\nu$.

This is a simple example of dynamics where geometry plays a role (as
opposed to mean field models):
a given card can only interact with its neighbors.

We write $\mu$ for the uniform measure on $S_N$
(we do not underline the dependence in $N$ in the notation when there
is no risk of confusion).
As the transpositions $(\tau_x)_{x=1}^{N-1}$ generate the group $S_N$,
this Markov chain is irreducible,
and $\mu$ is the unique invariant probability measure.
Hence, for $N$ fixed, when $t$ tends to infinity $P^{\nu}_t$ converges
to $\mu$ for any initial probability distribution,
and for this reason we refer to $\mu$ as the equilibrium measure.

We want to study properties of the relaxation to equilibrium of the
Markov chain or in other words the way in which $P_t$ converges to $\mu
$ when
$t\to\infty$, for large values of $N$. We investigate the asymptotic
behavior the \textit{total variation distance} to equilibrium which is
perhaps the most natural metric for probability measures.

If $\alpha$ and $\beta$ are two probability measures on a common
space $\Omega$, it is defined by
%
%
\begin{equation}
\llVert\alpha-\beta\rrVert_{\mathrm{TV}}:=\frac{1}{2}\sum
_{\omega\in\Omega} \bigl\llvert\alpha(\omega)-\beta(\omega)
\bigr\rrvert
= \sum_{\omega\in\Omega} \bigl(\alpha(\omega)-\beta(\omega)
\bigr)_+,
\end{equation}
where $x_+=\max(x,0)$ is the positive part of $x$.
An equivalent definition is
%
%
\begin{equation}
\llVert\alpha-\beta\rrVert_{\mathrm{TV}}=\max_{A\subset\Omega}
\alpha(A)-\beta(A).
\end{equation}
We will also sometimes use the following alternative characterization
of the distance:
we say that $\pi$ is a coupling of $\alpha$ and $\beta$ if $\pi$ is a
probability law on $\Omega\times\Omega$ for which the projected laws on
the first and second marginal
are respectively $\alpha$ and~$\beta$.

%
\begin{lemma}[{(\cite{cfLPW}, Proposition~4.7)}] \label{caravar}
We have
%
%
\begin{equation}
\label{caracoupl}\llVert\alpha-\beta\rrVert_{\mathrm{TV}}:=\min
\bigl\{ \pi(
\omega_1\ne\omega_2) \vert\pi\mbox{ is a coupling of $
\alpha$ and $\beta$} \bigr\}.
\end{equation}
\end{lemma}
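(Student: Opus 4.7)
The plan is to prove the two inequalities separately: first that every coupling gives an upper bound on $\pi(\go_1\neq \go_2)$ via $\|\alpha-\beta\|_{TV}$, and then to exhibit an explicit coupling that attains equality. Both directions are short and self-contained; the only real thing to pin down is the construction of the optimal coupling.

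For the lower bound direction, fix an arbitrary coupling $\pi$ of $\alpha$ and $\beta$, and let $A\subset \gO$. Using the marginal property, I can write
\begin{equation*}
\alpha(A)-\beta(A) = \pi(\go_1\in A) - \pi(\go_2\in A) \leq \pi(\go_1\in A,\ \go_2\notin A) \leq \pi(\go_1\neq \go_2),
\end{equation*}
since $\{\go_1\in A,\ \go_2\notin A\}\subset \{\go_1\neq\go_2\}$. Maximising over $A$ and using the second definition of total variation gives $\|\alpha-\beta\|_{TV}\leq \pi(\go_1\neq \go_2)$, so the right-hand side of \eqref{caracoupl} is at least $\|\alpha-\beta\|_{TV}$.

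For the matching upper bound I construct the standard greedy (maximal) coupling. Set $p:=\sum_{\go\in\gO}\min(\alpha(\go),\beta(\go))$, and observe, from the identity $\min(a,b)=a-(a-b)_+$, that $p=1-\|\alpha-\beta\|_{TV}$. If $p=1$ then $\alpha=\beta$ and the diagonal coupling works, so assume $0<p<1$. Define three probability measures on $\gO$:
\begin{equation*}
\nu(\go):=\frac{\min(\alpha(\go),\beta(\go))}{p},\qquad \alpha'(\go):=\frac{(\alpha(\go)-\beta(\go))_+}{1-p},\qquad \beta'(\go):=\frac{(\beta(\go)-\alpha(\go))_+}{1-p}.
\end{equation*}
Now sample a Bernoulli variable $\gep$ of parameter $p$; if $\gep=1$, draw $\go\sim \nu$ and set $\go_1=\go_2=\go$, while if $\gep=0$, draw $\go_1\sim \alpha'$ and $\go_2\sim \beta'$ independently. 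A direct check using $\alpha=p\,\nu+(1-p)\,\alpha'$ and the analogous identity for $\beta$ confirms that the marginals are $\alpha$ and $\beta$. Since $\alpha'$ and $\beta'$ are supported on disjoint sets, we get $\pi(\go_1\neq \go_2)\leq \bP(\gep=0)=1-p=\|\alpha-\beta\|_{TV}$, which closes the loop.

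There is no real obstacle here — this is a standard rewriting. The one place where a reader could slip is checking that $\alpha'$ and $\beta'$ are well-defined probability measures, which reduces to the elementary identity $\sum_\go (\alpha(\go)-\beta(\go))_+=\sum_\go (\beta(\go)-\alpha(\go))_+=1-p$, itself a consequence of $\sum_\go \alpha(\go)=\sum_\go \beta(\go)=1$.
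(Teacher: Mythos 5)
The paper does not prove this lemma at all; it simply cites \cite[Proposition~4.7]{cf:LPW} and moves on. Your argument is the standard two-sided proof given in that reference (lower bound for every coupling via $\alpha(A)-\beta(A)\leq\pi(\go_1\neq\go_2)$, then the greedy coupling $p\nu+(1-p)\alpha'\otimes\beta'$ with $p=1-\|\alpha-\beta\|_{TV}$ attaining it), and it is correct, including the small bookkeeping you flag about $\alpha'$ and $\beta'$ being genuine probability measures.
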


We define the distance to equilibrium of the Markov chain
%
%
\begin{equation}
\label{dat} d^N(t):=\llVert P_t-\mu\rrVert
_{\mathrm{TV}}.
\end{equation}

By symmetry of $S_N$, the distance to equilibrium does not depend on
the initial condition. The reader can further check that
\[
d^N(t)=\max_{\{ \nu\mathrm{\ probability\ on\ } S_N\} } \bigl
\llVert
P^{\nu}_t-\mu\bigr\rrVert_{\mathrm{TV}}.
\]

For a given $\varepsilon\in(0,1)$, we define the $\varepsilon
$-mixing-time to be
the time needed for the system to be at distance $\varepsilon$ from equilibrium
%
%
\begin{equation}
T_{\mathrm{mix}}^N(\varepsilon):=\inf\bigl\{t\geq0 \vert
d^N(t)\leq\varepsilon\bigr\}.
\end{equation}

Our first result states that for the first order asymptotics of
$T_{\mathrm{mix}}
^N(\varepsilon)$ for $N$ large does not depend on $\varepsilon$,
meaning that on a
certain time scale,
the distance to equilibrium drops abruptly from $1$ to $0$ in a very
short time.
This phenomenon has been conjectured or proved for a few types of
dynamics and has been called cutoff; this expression was coined in the
seminal paper \cite{cfDS}; see also \cite{cfLPW}, Chapter~18, for
more on this notion.
We further identify the exact location of the cutoff.

%
\begin{theorem}\label{mainres}
For the adjacent transposition shuffle we have for every \mbox
{$\varepsilon
\in(0,1)$},
%
%
\begin{equation}
\label{cutoff} \lim_{N\to\infty} \frac{2\pi^2T_{\mathrm
{mix}}^N(\varepsilon
)}{N^2\log N}=1.
\end{equation}
\end{theorem}

The mixing time for the AT shuffle has been the object of investigation
since Aldous \cite{cfAld}, Section~4, proved that one had to wait a
time at least of order
$N^2$
(more precisely of order $N^3$ steps in the discrete setup he
considered; see the \hyperref[sec1]{Introduction}) to reach equilibrium.
The last significant progress was made by Wilson in \cite{cfWilson},
where path coupling techniques developed in \cite{cfBubdy} were used to
prove that the mixing time was of order $N^2\log N$.

He proved that for any given $\varepsilon$,
\[
\frac{1}{2\pi^2}{N^2\log N} \bigl(1+o(1) \bigr)\leq
T_{\mathrm
{mix}}^N(\varepsilon)\leq\frac
{1}{\pi^2}{N^2
\log N} \bigl(1+o(1) \bigr),
\]
and predicted that the lower bound was sharp.
Our result brings this prediction to a rigorous ground and answers the
original questions of Aldous \cite{cfAld}.

\subsection{The separation cutoff}

Total variation is not the only kind of distance in which one might be
interested. Another commonly used distance
in the study of convergence to equilibrium is \textit{the separation
distance} (which is not a metric), defined by
\[
d_S(\alpha,\beta):= \max_{x\in\Omega} \biggl( 1-
\frac{\alpha
(x)}{\beta
(x)} \biggr).
\]

Another notion of distance to equilibrium can be derived from this
distance. We define
\[
d^N_S(t):=d_S(P_t,\mu)=\max
_{\{ \nu\mathrm{\ probability\ on\ } S_N\}
}d_S \bigl(P^\nu_t,
\mu\bigr).
\]

For $\varepsilon$ we define the separation mixing time as
%
%
\begin{equation}
T_{\mathrm{sep}}^N(\varepsilon):=\inf\bigl\{t\geq0 \vert
d^N_S(t)\leq\varepsilon\bigr\}.
\end{equation}

We prove that cutoff also occurs for the separation distance, but at a
time twice as large.

%
\begin{theorem}\label{sepdist}
For the adjacent transposition shuffle we have for every \mbox
{$\varepsilon
\in(0,1)$},
%
%
\begin{equation}
\label{cutoff2} \lim_{N\to\infty} \frac{\pi^2T_{\mathrm
{sep}}^N(\varepsilon
)}{N^2\log N}=1.
\end{equation}
\end{theorem}

This result solves another conjecture by Wilson (see \cite{cfWilson},
Table~1) and improves both the best previous lower bound and
upper bound by a factor $2$.

\subsection{The simple exclusion process}

The exclusion process is the simplest lattice model for particles with
hardcore interaction.
Consider the segment $[0,N]$ as being divided in $N$ intervals of unit size.
We identify the interval $[x-1,x]$, with $x\in\{1,\dots,N\}$, and
call each interval a site.
Each of these sites has two possible states: either it is empty or it
contains a particle.

When considering the exclusion process with $k$ particles,
the state space is defined by
%
%
\begin{equation}
\Omega_{N,k}= \Biggl\{ \gamma\in\{0,1\}^{N} \Big\vert\sum
_{x=1}^N \gamma(x)= k \Biggr\}.
\end{equation}

The simple exclusion process on the segment $[0,N]$ is a the
continuous-time Markov chain on $\Omega_{N,k}$ where each of the $k$ particles
jump to the left and to the right neighboring site with rate one
whenever these sites are empty.
An equivalent (but maybe less physical) description of the process is
to say that the content of each pair of neighboring sites gets
exchanged with rate one.
To be more formal, note that $S_N$ naturally acts on $\Omega_{N,k}$. For
$\sigma\in S_N$, $\gamma\in\Omega_{N,k}$, one can define
%
%
\begin{equation}
\label{actionee} \sigma\cdot\gamma(x):=\gamma\bigl(\sigma(x)
\bigr).
\end{equation}
The generator of the simple exclusion on the segment can be written as follows:
%
%
\begin{equation}
\label{crading} (\mathcal L f) (\gamma):=\sum_{x=1}^{N-1}
f(\tau_x\cdot\gamma)-f(\gamma),
\end{equation}
where $\tau_x$ denotes the adjacent transposition $(x,x+1)$.
The equilibrium measure of this chain process is the uniform measure on
$\Omega_{N,k}$ that we call $\mu_k$ or $\mu$ when there is no possible
confusion.
We write $(\gamma^\xi_t)_{t\geq0}$ for the Markov chain starting from
$\xi\in\Omega_{N,k}$.
We set also $P_t^\xi$ to be the law of the time marginal $\gamma^\xi_t$.
We define the distance to equilibrium at time $t$, for total variation
distance and separation respectively to be equal to
%
%
\begin{eqnarray}
\label{dissep} %
d^{N,k}(t)&:=& \max_{\xi\in\Omega_{N,k}}
\bigl\llVert P^\xi_t-\mu\bigr\rrVert_{\mathrm{TV}}=
\max_{\{\nu\mathrm{\ probability\ on\ } \Omega_{N,k}\}} \bigl
\llVert P^\nu_t-\mu
\bigr\rrVert_{\mathrm{TV}},
\nonumber
\\[-8pt]
\\[-8pt]
d^{N,k}_S(t)&:=& \max_{\xi\in\Omega_{N,k}}
d_S \bigl(P^\xi_t,\mu\bigr)=\max
_{\{
\nu\mathrm{\ probability\ on\ } \Omega_{N,k}\}} d_S \bigl(P^\nu_t,
\mu\bigr).
\nonumber
\end{eqnarray}
Note that contrary to what happens for the AT shuffle, the distance
$\llVert P^\xi_t-\mu\rrVert _{\mathrm{TV}}$ depends on the initial
condition $\xi$ as there
is no symmetry.
The respective mixing times are defined by
%
%
\begin{eqnarray}
T_{\mathrm{mix}}^{N,k}(\varepsilon)&:=&\inf\bigl\{t\geq0
\vert d^{N,k}(t)\leq\varepsilon\bigr\},
\nonumber
\\[-8pt]
\\[-8pt]
T_{\mathrm{sep}}^{N,k}(\varepsilon)&:=&\inf\bigl\{t\geq0 \vert
d^{N,k}(t)\leq\varepsilon\bigr\}.
\nonumber
\end{eqnarray}

%
\begin{theorem}\label{mainressep}
For any $\varepsilon>0$, given a sequence $k(N)$ which is such that
both $k$
and $N-k$ tend to infinity, we have
the following asymptotics for the mixing time:
%
%
\begin{equation}
\label{mixsep} \lim_{N\to\infty} \frac{2\pi^2T_{\mathrm
{mix}}^{N,k}(\varepsilon
)}{N^2\log\min(k,N-k)}=1.
\end{equation}
If furthermore we have
%
%
\begin{equation}
\lim_{N\to\infty} \frac{\log\min(k, N-k) }{\log\log N}=\infty,
\end{equation}
then
%
%
\begin{equation}
\lim_{N\to\infty} \frac{\pi^2T_{\mathrm{sep}}^{N,k}(\varepsilon
)}{N^2\log\min(k,N-k)}=1.
\end{equation}
\end{theorem}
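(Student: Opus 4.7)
My plan is to prove Theorem \ref{mainressep} by establishing matching upper and lower bounds for both the total-variation and the separation mixing times. First, the particle–hole involution $\gamma \mapsto \mathbf{1} - \gamma$ preserves the generator \eqref{crading} and conjugates the $k$-particle dynamics with the $(N-k)$-particle one, so we may assume $k = \min(k, N-k) \leq N/2$. The central object is the test function $\Phi(\gamma) := \sum_{x=1}^{N} \gamma(x)\psi(x)$ with $\psi(x) := \cos(\pi(x-1/2)/N)$. A direct computation using \eqref{crading} and the Neumann-type identity $\psi(2)-\psi(1) = -\lambda\psi(1)$ shows $\mathcal L \Phi = -\lambda \Phi$ with $\lambda := 2(1-\cos(\pi/N)) = \pi^2/N^2 + O(N^{-4})$, which is also the spectral gap of the exclusion process on the segment. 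Using $\sum_x \psi(x) = 0$ and the hypergeometric covariance structure of $\mu_k$ one finds $\E_\mu \Phi = 0$ and $\Var_\mu \Phi = k(N-k)/(2(N-1))$. For the extremal initial condition $\xi^- := \mathbf{1}_{\{1,\ldots,k\}}$, a short calculation shows that $|\Phi(\xi^-)|^2/\Var_\mu \Phi$ is of order $\min(k, N-k)$ in all regimes.

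The lower bound $\Tm^{N,k}(\gep) \geq (1-o(1))\frac{N^2 \log \min(k,N-k)}{2\pi^2}$ then follows from Wilson's lemma \cite{cf:Wilson} applied to $\Phi$, noting that the fluctuation term $\mathcal L(\Phi^2) - 2\Phi \cdot \mathcal L \Phi$ is bounded uniformly in $N$ and $k$. For the separation lower bound, I would use the spectral expansion (valid by reversibility) $P_t^\xi(\eta)/\mu(\eta) = 1 + \sum_{j \geq 1} e^{-\lambda_j t} f_j(\xi) f_j(\eta)$, taking $\xi = \xi^-$ and $\eta = \xi^+ := \mathbf{1}_{\{N-k+1,\ldots,N\}}$. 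The $j=1$ term with $f_1 = \Phi/\sqrt{\Var_\mu \Phi}$ is negative and of magnitude $|\Phi(\xi^-)|^2/\Var_\mu \Phi \sim \min(k,N-k)$, while the remaining terms are controlled by Cauchy--Schwarz by $e^{-\lambda_2^{\star} t}/\sqrt{\mu(\xi^-)\mu(\xi^+)} \leq e^{-\lambda_2^{\star} t} \binom{N}{k}^{1/2}$, where $\lambda_2^{\star} \geq (1+c)\lambda$ is the next eigenvalue above $\lambda$. The hypothesis $\log\min(k,N-k)/\log\log N \to \infty$ is precisely what guarantees that $\log\binom{N}{k} = o(\lambda_2^{\star} t)$ at the critical time, so the leading mode dominates and yields $\Ts^{N,k}(\gep) \geq (1-o(1))\frac{N^2 \log \min(k,N-k)}{\pi^2}$.

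For the upper bound on $\Tm^{N,k}$, I would set up a monotone grand coupling of two exclusion processes started from the two extremal configurations $\xi^+$ and $\xi^-$, driven by common rate-one edge clocks, so that via Lemma \ref{caravar} the TV distance is bounded by the non-coalescence probability. The key estimate is an $L^2$-type control on the area between the two coupled height functions (Figure \ref{partisep}): its projection onto $\Phi$ is a supermartingale contracting at rate $\lambda$, while the higher modes contribute only a polynomially small correction, yielding coalescence within time $(1+o(1))\frac{N^2 \log \min(k,N-k)}{2\pi^2}$ with probability $1-\gep$. The separation upper bound is then obtained from the semigroup splitting $P_t = P_{t/2}\circ P_{t/2}$: by Cauchy--Schwarz and reversibility,
\begin{equation*}
\max_\eta \left| \frac{P_t^\xi(\eta)}{\mu(\eta)} - 1 \right| \leq \left\| \frac{P_{t/2}^\xi}{\mu} - 1 \right\|_{L^2(\mu)}^2 ,
\end{equation*}
and the $L^2$ estimate used for the TV analysis shows the right-hand side is of order $\min(k,N-k) e^{-\lambda t}$, giving $d_S^{N,k}(t) \leq \gep$ for $t \geq (1+o(1))\frac{N^2 \log \min(k,N-k)}{\pi^2}$.

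The main obstacle is the TV upper bound: a naive $L^2$ argument would incur a loss of order $\log\binom{N}{k} \sim k\log(N/k)$ rather than $\log k$, so one really needs a coupling or preconditioning step that reduces the effective initial variance down to the single slowest mode. The area-based supermartingale technique to be developed for the AT-shuffle in Theorem \ref{mainres} is the natural tool, and the assumption $\log\min(k,N-k)/\log\log N \to \infty$ reflects the price paid for this reduction: polynomial $N$-factors that would otherwise dominate the separation distance are absorbed into the exponential decay only when this condition holds.
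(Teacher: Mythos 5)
Your proposal contains two genuine gaps. First, the separation lower bound cannot be closed by the spectral Cauchy--Schwarz argument you sketch. The tail $\sum_{j\geq 2} e^{-\lambda_j t} f_j(\xi^-)f_j(\xi^+)$ is controlled by $e^{-\lambda_2^\star t}/\sqrt{\mu(\xi^-)\mu(\xi^+)} = e^{-\lambda_2^\star t}\binom{N}{k}$, and even with the gap-amplification $\lambda_2^\star\geq(1+c)\lambda$ you only get $\lambda_2^\star t \approx (1+c)\log\min(k,N-k)$ at the critical time, whereas $\log\binom{N}{k}\sim k\log(N/k)$ is typically far larger (e.g. for $k=N/2$ it is of order $N$). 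The hypothesis $\log\min(k,N-k)/\log\log N\to\infty$ does \emph{not} give $\log\binom{N}{k}=o(\lambda_2^\star t)$; the two scales are unrelated in the interesting regimes. The paper avoids this catastrophe entirely by writing $P_t^\wedge(\vee)=\sum_\eta P_{t/2}^\wedge(\eta)P_{t/2}^\vee(\eta)$, partitioning $\gO_{N,k}$ according to the sign of $\eta(\lceil N/2\rceil)$, and applying FKG on each piece to reduce the problem to showing that $P^\vee_{t/2}(\gO_1)$ and $P^\wedge_{t/2}(\gO_2)$ vanish, which is a one-coordinate second-moment computation (Lemma \ref{lemecaumilieu}). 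That argument never sees $\binom{N}{k}$.

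Second, for the TV upper bound, the coupling you propose -- common rate-one edge clocks, i.e.\ the standard graphical construction -- is explicitly shown in \cite{cf:Wilson} to give coalescence only at time $\frac{1}{\pi^2}N^2\log N(1+o(1))$, off by a factor $2$ from the truth; the paper stresses that ``to have an improvement on Wilson's bound, one must necessarily use another coupling.'' Section \ref{glopal} introduces precisely such a coupling (independent clocks attached to corner locations, as in the zero-temperature Ising interface), chosen so that the two paths flip corners as independently as possible and the area supermartingale accumulates \emph{fluctuation}, not just drift. Moreover, the coalescence proof is not an $L^2$ projection onto $\Phi$: it requires first bringing $P^\wedge_{t_0}$ and $P^\vee_{t_0}$ to equilibrium (Proposition \ref{topandbottom}, itself a full censoring-plus-skeleton argument), then using equilibrium statistics to guarantee $\Theta(k/\mathrm{poly}\log k)$ active corners at all times (Lemma \ref{yapadeh}), and finally running a time-changed random walk comparison (Lemmas \ref{leptitauis}--\ref{ladique}). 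Your acknowledged ``preconditioning step'' is exactly this machinery, and without it the supermartingale alone does not force coalescence on the required scale. As a small additional point, the separation \emph{upper} bound should not go through an $L^2$ bound (which again carries the $\binom{N}{k}$ factor) but rather through the general reversible-chain inequality $d_S(2t)\leq 4d(t)$, which combined with the TV upper bound gives the result directly.
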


In this case also the lower bound for $T_{\mathrm
{mix}}^{N,k}(\varepsilon)$
\[
T_{\mathrm{mix}}^{N,k}(\varepsilon)\geq\frac{1}{2\pi^2}N^2
\log\min(k,N-k) \bigl(1+o(1) \bigr),
\]
corresponds to \cite{cfWilson}, Theorem~4.

%
\begin{rem}
The assumption on $k$ for the separation mixing time is purely
technical, and we do not believe it to be necessary.
As exposed in the next section,
the upper bound
\[
\limsup_{N\to\infty} \frac{\pi^2T_{\mathrm
{sep}}^{N,k}(\varepsilon)}{N^2\log\min
(k,N-k)}\leq1
\]
is a consequence of \eqref{mixsep} and thus is valid whenever both $k$
and $N-k$ tend to infinity.
\end{rem}

\subsection{Connection between exclusion and AT shuffle and between
separation and total variation}

There is a natural projection for the set of permutations onto the set
of particle configurations
%
%
\begin{eqnarray}
\label{gammasigma} S_N &\to&\Omega_{N,k},
\nonumber
\\[-8pt]
\\[-8pt]
\sigma&\mapsto&\gamma_\sigma.
\nonumber
\end{eqnarray}

It gives to the card labeled from $1$ to $k$ the role of particles and
to those labeled from $k+1$ to $N$
the role of empty sites (see Figure~\ref{partisep})
with
%
%
\begin{equation}
\gamma_\sigma(x):= %
\cases{\displaystyle1 &\quad if $\displaystyle
\sigma(x)\leq k$,
\cr
0 &\quad  if $\displaystyle\sigma(x)> k$. } %
\end{equation}

%
\begin{figure}

\includegraphics{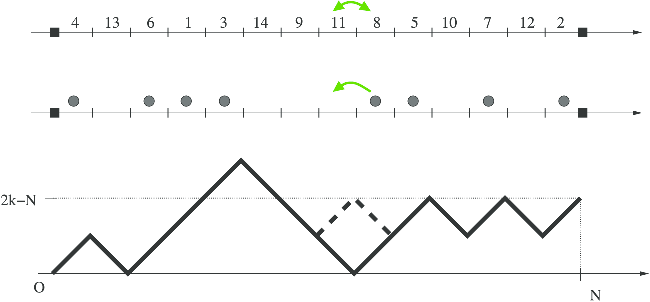}

\caption{On the first line, a permutation with
$N=15$ is represented as with a possible composition by an adjacent
transposition (double arrow).
The second line gives the image of the permutation by the mapping
\protect\eqref{gammasigma} for $k=8$, the adjacent transposition of
the first
line corresponds to a particle jump.
The third line gives the lattice paths version of the particle system:
each particle corresponds to an up step and each empty site to a down
step. When a particle jumps, a local extremum of the path is
``flipped.'' This lattice path correspondence is used in the
construction of $\widetilde\sigma$ [equation \protect\eqref
{tildesigma}] and
$\eta$ [equation \protect\eqref{defeta}].}
\label{partisep}
\end{figure}

With this mapping, the AT shuffle $(\sigma_t)_{t\geq0}$ is mapped on
the exclusion process [this is a simple consequence of \eqref{crading}].
As the total variation distance shrinks with projection, we have
[recall \eqref{dat} and \eqref{dissep}] for all $k\in\{1,\dots,N-1\}$,
%
%
\begin{eqnarray}
d^{N,k}(t)&\leq& d^N(t)\qquad\forall t \geq0,
\nonumber
\\[-8pt]
\\[-8pt]
T_{\mathrm{mix}}^{N,k}(\varepsilon) &\leq& T_{\mathrm
{mix}}^N(
\varepsilon) \qquad\forall\varepsilon\in(0,1).
\nonumber
\end{eqnarray}
Similar inequalities are valid for the separation distance.
For these reasons, the lower bound asymptotics for the mixing time in
Theorems \ref{mainres} and \ref{sepdist} are implied by the lower
bound asymptotics
in Theorem~\ref{mainressep} for $k=N/2$, and the upper bound in
Theorem~\ref{mainressep} for $k=N/2$ is implied by the upper bound in
Theorem~\ref{mainres}.

Furthermore, there exists a general comparison inequality for the total
variation distance and separation distance for reversible Markov chains
(see, for instance, \cite{cfLPW}, Lemma~19.3),
%
%
\begin{equation}
d_S(2t)\leq4d(t).
\end{equation}
This implies
\[
T_{\mathrm{sep}}^N(\varepsilon)\leq2 T_{\mathrm{mix}}^N(
\varepsilon/4) \quad\mbox{and} \quad T_{\mathrm{sep}} ^{N}(
\varepsilon)\leq2 T_{\mathrm{mix}}^N(\varepsilon/4),
\]
the analogous inequality being valid for the exclusion process.
In view of this and of the bounds proved in \cite{cfWilson}, to prove Theorems
\ref{mainres}, \ref{sepdist} and \ref{mainressep} it is sufficient
to prove the following statements:
\begin{itemize}
\item The sharp asymptotic upper bound on the mixing time of the AT shuffle
\[
T_{\mathrm{mix}}^N(\varepsilon)\leq\frac{1}{2\pi^2}N^2
\log N \bigl(1+o(1) \bigr).
\]
\item A sharp asymptotic lower bound on the mixing time for the
separation distance for the
exclusion process
\[
T_{\mathrm{sep}}^{N,k}(\varepsilon)\geq\frac{1}{\pi^2}N^2
\log\min(k,N-k) \bigl(1+o(1) \bigr).
\]
The case $k=N/2$ gives the lower bound for the AT shuffle.
\item A sharp asymptotic upper bound on the mixing time of the
exclusion process
\[
T_{\mathrm{mix}}^{N,k}(\varepsilon)\leq\frac{1}{2\pi^2}N^2
\log\min(k,N-k) \bigl(1+o(1) \bigr).
\]
\end{itemize}

For the sake of completeness, we will also provide a short proof for
the lower bound on the mixing time
of the exclusion process
\[
T_{\mathrm{mix}}^{N,k}(\varepsilon)\geq\frac{1}{2\pi^2}N^2
\log\min(k,N-k) \bigl(1+o(1) \bigr).
\]

\subsection{Open questions}

\subsubsection{The cutoff window}
Our results only identify the main asymptotic term for the mixing time,
and a natural question would be how
to obtain a more complete asymptotic. In particular, one would like to
know on what time scale around $T_{\mathrm{mix}}(1/2)$ the total
variation distance
drops from $1$ to zero [i.e., e.g., the asymptotic behavior of
$T_{\mathrm{mix}}
(3/4)-T_{\mathrm{mix}}(1/4)$].
This time scale is usually referred to as the \textit{cutoff window},
and from heuristics of Wilson \cite{cfWilson}, Section~10, the
natural conjecture would be that
it is of order $N^2$.

With some tedious effort, an upper bound on the cutoff window could be
derived from our proof, but there are some serious reasons why we
cannot push this up to the optimal order $N^2$.

Our proofs rely very much on the graph structure which is considered,
that is, the segment $\{1,\dots,N\}$, and in particular on the fact
that it is totally ordered.
Hence a natural challenge is to try to generalize the method for the
$\sqrt{N}\times\sqrt{N}$ grid (or higher dimensional ones)
for which most of the monotonicity tool cannot be used, or at least,
not in the manner it is used in the present paper. In fact, even the
case of the circle $\mathbb{Z} / N\mathbb{Z} $
is a challenging one.

%
\begin{rem} Since the competition of this work, we have developed an
alternative approach to tackle the problem of the mixing time for the
exclusion process on the circle \cite{cfLac}. While the method is
slightly more robust and, in particular, does not depend on
monotonicity consideration, it does not permit us to treat the case of
the adjacent transposition shuffle. On the positive side, it gives a
sharp result on the cutoff window [which is shown to be indeed $O(N^2)$].
\end{rem}

\subsection{Organization of the paper}

A key ingredient in the proof of all our results is the use of mononicity:
we introduce a natural order on our state space which is preserved by
the dynamics,
and then use order-preservation to get extra information about the
convergence to equilibrium.

Hence an important part of the paper, Section~\ref{monotool}, is
dedicated in introducing the order,
and various properties of order preservation on the symmetric group.
In Section~\ref{nomonotool}, we introduce further important technical
tools: we show how our processes are related to the heat equation
and exhibit a weaker upper bound on the mixing time, which is used in
the proof as an input.
These two preliminary sections are absolutely crucial to understanding
the rest of the paper,
though the proof of the results presented
in them might be skipped on a first reading. Some of the more technical
proofs of these sections are postponed to Appendix~\ref{app}.

In Section~\ref{proofmres} we prove an upper bound for the mixing time
of the AT shuffle (which together with the lower bound of \cite
{cfWilson} implies
Theorem~\ref{mainres}). In Section~\ref{lbexp} we prove the lower
bound result on the separation mixing time and total variation mixing
time for the exclusion process,
from which we deduce Theorem~\ref{sepdist} and half of Theorem~\ref
{mainressep}. In Section~\ref{excluproc}, we prove an upper bound for
the mixing time of the exclusion process for an arbitrary number of
particles to complete the proof of Theorem~\ref{mainressep}.

\subsection{Notation}

Let us introduce some notation that we will repeatedly use in the paper.

We use $:=$ to define new quantities (and in a few cases, $=:$ when the
quantity which is defined is on the right-hand side).

If $\nu$ is a probability distribution on $S_N$ (or $\Omega_{N,k}$) and
$\sigma\in S_N$, we write
$\nu(\sigma)$ for $\nu(\{\sigma\})$.

We write $\nu(f)$ or $\nu(f(\sigma))$
for the expected value of $f(\sigma)$,
\[
\nu(f):=\sum_{\sigma\in S_N} f(\sigma)\nu(\sigma).
\]

Expectations are denoted by $\mathbb{E} $ when the probability is
denoted by
$\mathbb{P} $.

We write $\frac{\nu}{\mu}$ for the probability density
\[
\sigma\mapsto\frac{\nu(\sigma)}{\mu(\sigma)}.
\]

Finally, we say that an event or rather a family of events
$(A_N)_{N\geq
0}$ holds with high probability (and write w.h.p.)
if
\[
\lim_{N\to\infty} \mathbb{P} (A_N)=0.
\]

\section{A tool box to take advantage of monotonicity}\label{monotool}

Putting an order on the set of permutations might seem a strange idea
at first glance because of the complete symmetry of $S_N$.
What we do to break that symmetry is we choose to give a special role
to the identity which we fix to be the maximal element.
Then the idea is to say that $\sigma$ is larger than $\sigma'$ if it
is ``closer to the identity'' in a certain sense.

However, in order to give a simple definition of our order on $S_N$, we
must first introduce a mapping that transforms permutations into
discrete surfaces.

\subsection{Mapping permutations onto discrete surfaces}

The following mapping is inspired by \cite{cfWilson}, Figure~3.
We associate with each $\sigma\in S_N$ a function $\widetilde\sigma
\dvtx
\{
0,\dots,N\}^2\to\mathbb{R} $,
defined as follows:
%
%
\begin{equation}
\label{tildesigma} \widetilde\sigma(x,y):=\sum_{z=1}^x
\mathbf{1}_{\{\sigma(z)\leq y\}
}-\frac{xy}{N}.
\end{equation}
The term $xy/N$ is subtracted so that $\widetilde\sigma(x,y)$ has zero
mean under the equilibrium measure.
The map is injective. Indeed,
\[
\widetilde\sigma(x,y)- \widetilde\sigma(x,y-1)-\widetilde\sigma(x-1,y)+
\widetilde\sigma(x-1,y-1)+\frac{1}{N}=\mathbf{1}_{\{\sigma(x)= y\}}.
\]
We identify the image set $\{\widetilde\sigma\vert \sigma\in S_N\}$
with $S_N$ as it brings no confusion. This mapping induces a natural
(partial) order relation on $S_N$ defined by
\[
\sigma\leq\sigma'\quad\Leftrightarrow\quad\forall x,y,\qquad
\widetilde\sigma(x,y)\geq\widetilde\sigma'(x,y).
\]

The identity (which we denote by $\mathbf{1}$) is the maximal element of
$(S_N,\geq)$, and the permutation $\sigma_{\min}$
defined by
%
%
\begin{equation}
\forall x \in\{1, \dots, N\} ,\qquad\sigma_{\min}(x)=N+1-x
\end{equation}
is the minimal one.

\subsection{The graphical construction}\label{graphix}

We present now a construction of the dynamics which allows us to
construct all the trajectories $\sigma^\xi_t$ starting from
all initial conditions $\xi\in S_N$ simultaneously (a \textit{grand
coupling} and has the property of conserving the order).

We associate with each $x\in\{1,\dots,N-1\}$ an independent Poisson
processes $(\mathcal{T} ^x)= (\mathcal{T} ^x_n)_{n\geq0}$ which has
intensity two.
In other words $\mathcal{T} ^x_0=0$ for every $x$ and
\[
\bigl(\mathcal{T} ^x_n-\mathcal{T} ^x_{n-1}
\bigr)_{x\in\{1,\dots
,N-1\}, n\geq1}
\]
is a field of i.i.d. exponential variables with mean $1/2$.
We refer to $\mathcal{T} =(\mathcal{T} ^x)_{1\leq x\leq N-1}$ as {\sl
the clock process}.
Note that the set of values taken by the clock processes is almost
surely a discrete subset of $\mathbb{R} $.

Let $(U^x_n)_{x\in\{1,\dots,N-1\}, n\geq1}$, be a field of i.i.d.
Bernoulli random variables
($U^x_n\in\{ 0,1\}$) with parameter one half, which is independent of
$\mathcal{T} $.

Now given $\mathcal{T} $ and $U$, we construct, in a deterministic fashion
$(\sigma^\xi_t)_{t\geq0}$, the trajectory of the Markov
chain starting from $\xi\in S_N$.
The trajectory $(\sigma^\xi_t)_{t\geq0}$ is c\`adl\`ag and is
constant on the intervals where the clock process is silent.

When a clock rings, that is, at time $t=\mathcal{T} ^x_n$ $(n\geq
1)$, $\sigma
^\xi_t$ is constructed by updating $\sigma^\xi_{t^-}$ as follows:
\begin{itemize}
\item if either $U^x_n= 1$ and $\sigma_{t^-}(x+1)\leq\sigma
_{t^-}(x)$, or $U^x_N=0$ and $\sigma_{t^-}(x+1)\geq\break\sigma_{t^-}(x+1)$,
we exchange the values of $\sigma_{t^-}(x)$ and $\sigma_{t^-}(x+1)$;
\item in the other cases, we do nothing.
\end{itemize}
In other words, when the clock process associated to $x$ rings, we sort
the cards in position $x$ and $x+1$ if $U^n_x=1$, and we
reverse sort them if $U^x_i=0$. It is straightforward to check that
this construction gives a Markov chain with generator $\mathcal L$
described in \eqref{defgen}.

The effect of the update on $\widetilde\sigma$ is the following: for each
$y\in\{1,\dots, N-1\}$,
if $(\widetilde\sigma_{t^-}(z,y))_{z\in\{1,\dots,N-1\}}$ presents a
local minimum at $z=x$ and $U^x_n= 1$, then it is turned into a local maximum
[$\widetilde\sigma_{t}(x,y)=\widetilde\sigma_{t^-}(x,y)+1$].
On the contrary if it has a local minimum at $z=x$ and $U^x_n=0$, then
$\widetilde\sigma_{t}(x,y)=\widetilde\sigma_{t^-}(x,y)-1$. We call this
operation an update of $\sigma$ at
coordinate $x$.

The fact that the order is conserved by this construction is not a new
result (see, for instance, \cite{cfWilson}), but we choose to include
a short proof here for the sake of completeness.

%
\begin{proposition}\label{orderpreserv}
Let $\xi\geq\xi'$ be two elements of $S_N$. With the graphical
construction above, we have
%
%
\begin{equation}
\sigma^\xi_t\geq\sigma^{\xi'}_t.
\end{equation}
\end{proposition}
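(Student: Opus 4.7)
The plan is to induct on the successive times at which the clocks $\mathcal T^x$ ring. Since the trajectories $\sigma^\xi_t$ are c\`adl\`ag and constant between consecutive clock rings, and the grand coupling updates all initial conditions simultaneously, it suffices to check that a single update at time $t=\mathcal T^x_n$ preserves the order. Assume inductively that $\tilde\sigma^\xi_{t^-}(z,y)\le \tilde\sigma^{\xi'}_{t^-}(z,y)$ for every $(z,y)$; I must show the same holds at time $t$. The value $\tilde\sigma(z,y)$ depends only on $(\sigma(1),\dots,\sigma(z))$, and the swap at position $x$ preserves $\{\sigma(x),\sigma(x+1)\}$ as a multiset, so the only coordinate that can possibly change is $z=x$. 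It therefore suffices to verify $\tilde\sigma^\xi_t(x,y)\le \tilde\sigma^{\xi'}_t(x,y)$ for each fixed $y\in\{1,\dots,N-1\}$.

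Fix such a $y$ and abbreviate $h(z):=\tilde\sigma^\xi_{t^-}(z,y)$, $h'(z):=\tilde\sigma^{\xi'}_{t^-}(z,y)$. The increment $h(z)-h(z-1)$ equals $1-y/N$ when $\sigma^\xi_{t^-}(z)\le y$ and $-y/N$ otherwise; the same holds for $h'$. Translated into this language, the update rule reads: when $U^x_n=1$, the value $h(x)$ is incremented by $1$ iff $h$ has a local minimum at $x$ (the case $\sigma^\xi_{t^-}(x)>y,\ \sigma^\xi_{t^-}(x+1)\le y$), and is left unchanged otherwise; when $U^x_n=0$, it is decremented by $1$ exactly when $h$ has a local maximum. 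So for each $y$ the update is a corner-flip, and the question reduces to showing that this coupled corner-flip dynamics preserves $h\le h'$.

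The delicate case is $h(x)=h'(x)$, because otherwise $h'(x)-h(x)\ge 1$ (both values live in $\bbZ-xy/N$) and any single $\pm 1$ move respects the inequality. Take $U^x_n=1$ and suppose $h$ has a local minimum at $x$; I claim $h'$ must too, so that both heights rise by $1$ together. Indeed $\sigma^\xi_{t^-}(x)>y$ gives $h(x-1)=h(x)+y/N$, and combining $h'(x-1)\ge h(x-1)=h'(x)+y/N$ with the only two possible values $y/N$ and $y/N-1$ of $h'(x-1)-h'(x)$ forces $h'(x-1)=h'(x)+y/N$, i.e.\ $\sigma^{\xi'}_{t^-}(x)>y$. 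Applying the same reasoning at $x+1$ gives $\sigma^{\xi'}_{t^-}(x+1)\le y$, so $h'$ has a local minimum at $x$ as required. The case $U^x_n=0$ is completely symmetric: using the upper bounds $h(x\pm 1)\le h'(x\pm 1)$ under the assumption that $h'$ has a local maximum at $x$ forces $h$ to share it. The main (and really only) subtlety is precisely this propagation: a tight inequality at $(x,y)$ together with pointwise domination at the two neighbours pins down the local geometry of the two height functions around $x$; once this is noticed, the induction closes without further effort.
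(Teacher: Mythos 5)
Your proof is correct and takes essentially the same approach as the paper's: reduce to the single coordinate $x$ that can change, reduce to the tight case $\tilde\sigma^\xi_{t^-}(x,y)=\tilde\sigma^{\xi'}_{t^-}(x,y)$, and show that a local extremum of one height profile forces the same local extremum in the other, so the coupled corner-flips cannot break the pointwise order. You spell out the increment bookkeeping (the two possible values $1-y/N$ and $-y/N$) somewhat more explicitly than the paper, which simply observes that the dominating curve lies above at the two neighbors; also note that your pointwise inequalities run in the direction opposite to those in the paper's own proof because you followed the paper's stated definition $\sigma\le\sigma'\Leftrightarrow\tilde\sigma\ge\tilde\sigma'$, which contains a sign typo relative to how the order is actually used (e.g.\ in making the identity the maximal element) -- the argument is unaffected since it is symmetric in $\xi,\xi'$.
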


\begin{pf}
The only thing to check is that the order is conserved each time a the
clock process rings; that is, for every $(n,x)$ and $t=\mathcal{T} ^x_n$,
\[
\sigma^\xi_{t^-}\geq\sigma^{\xi'}_{t^-}
\quad\Rightarrow\quad\sigma^\xi_t\geq
\sigma^{\xi'}_t.
\]
The right-hand side in the above relation is satisfied if we have
\[
\forall y\in\{1,\dots,N-1\},\qquad\widetilde\sigma^\xi_t(x,y)
\geq\sigma^{\xi'}_t(x,y)
\]
because the other coordinates are not changed at time $t$.

Let us fix $y$.
Note that when $\widetilde\sigma^\xi_{t^-}(x,y)> \widetilde\sigma
^{\xi
'}_{t^-}(x,y)$, there is nothing to prove because it is not possible for
$\widetilde\sigma^\xi$ to jump down while $\widetilde\sigma^{\xi
'}$ jumps
up. For this reason, we might assume that
\[
\widetilde\sigma^\xi_{t^-}(x,y)=\widetilde
\sigma^{\xi'}_{t^-}(x,y).
\]

If $U^x_n= 1$, we just have to check that if $\widetilde\sigma^{\xi
'}_{t}(x,y)$ jumps up, so does $\widetilde\sigma^{\xi}_{t}(x,y)$. This
is easy because if
$\widetilde\sigma^{\xi'}_{t^-}(\cdot,y)$ presents a local minimum at
$x$, then so does $\widetilde\sigma^{\xi}_{t^-}(\cdot,y)$, which is
situated above.

If $U^x_n= 0$, for the same reasons, if $\widetilde\sigma^{\xi
}_{t}(x,y)$ jumps down so does $\widetilde\sigma^{\xi}_{t}(x,y)$,
and we
are done.
\end{pf}

\subsection{Stochastic ordering and its preservation}

Let us recall in this section the definition of stochastic dominance
for probability measures.

Let $\alpha$ and $\beta$ be two probability measures on a finite
ordered set $\Omega$.
We say that $\alpha$ stochastically dominates $\beta$ and write
$\alpha
\succeq\beta$ if
one can find a coupling $\pi$, that is, a probability on $\Omega
\times
\Omega$ such that the first marginal has law $\alpha$ and the second
$\beta$, which satisfies
\[
\omega_1\geq\omega_2, \qquad\pi\mbox{ almost surely.}
\]

We say that a function $f$ on $\Omega$ is increasing if
\[
\forall\omega,\omega'\in\Omega, \qquad\omega\geq
\omega' \quad\Rightarrow\quad f(\omega)\geq f \bigl(
\omega' \bigr).
\]
For an ordered set $\Omega$, we say that a subset $A$ is increasing if
the function $\mathbf{1}_A$ is increasing or equivalently if
%
%
\begin{equation}
\forall\omega\in A,\qquad\omega'\geq\omega\quad\Rightarrow\quad
\omega\in A.
\end{equation}

Recall the notation $\alpha(f)$ for the expectation of $f(\omega
)$ with respect to $\alpha$.
The Kantorovic duality lemma (see, e.g., \cite{cfvillani}, Theorem~5.10, item (i)) provides the following equivalent characterization of
stochastic domination:

%
\begin{lemma}\label{transport}
Consider $\alpha$ and $\beta$ two probability measures on a finite
ordered set $\Omega$. The following statements are equivalent:
\begin{itemize}
\item$\alpha$ dominates $\beta$;
\item for all increasing functions $f$ defined on $\Omega$,
\[
\alpha(f)\geq\beta(f).
\]
\end{itemize}
\end{lemma}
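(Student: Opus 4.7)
The direction $(i) \Rightarrow (ii)$ will be immediate: given a coupling $\pi$ of $\alpha$ and $\gb$ on $\gO\times \gO$ supported on the monotone set $\{\go_1 \ge \go_2\}$ and any increasing function $f$, the inequality $f(\go_1) \ge f(\go_2)$ holds $\pi$-almost surely, and taking $\pi$-expectations yields $\alpha(f) \ge \gb(f)$.

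For the reverse implication, the plan is to apply Kantorovich duality (which is the content of the result cited from \cite{cf:villani}) to the cost function $c(\go_1,\go_2) = \ind_{\{\go_2 \not\le \go_1\}}$ on $\gO \times \gO$. A coupling $\pi$ satisfies $\pi(\go_1\ge \go_2) = 1$ precisely when $\int c\,d\pi = 0$, so statement (i) is equivalent to saying that the minimum transportation cost is zero. By Kantorovich duality this minimum equals
\[
\sup\big\{\alpha(\phi) + \gb(\psi) : \phi(\go_1) + \psi(\go_2) \le c(\go_1,\go_2) \text{ for all } \go_1,\go_2 \in \gO\big\},
\]
and I would show that this supremum is $\le 0$ under assumption (ii); the reverse inequality is trivial by taking $\phi=\psi=0$.

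The key step will be to construct, from an arbitrary admissible dual pair $(\phi,\psi)$, an increasing function that dominates both. The constraint $\phi(\go_1)+\psi(\go_2) \le 0$ valid for $\go_2 \le \go_1$ gives $\phi(\go_1) \le -h(\go_1)$, where
\[
h(\go) := \max_{\go' \le \go} \psi(\go').
\]
By construction $h$ is increasing, and $h \ge \psi$ pointwise (take $\go' = \go$ in the max). Consequently $\alpha(\phi) \le -\alpha(h)$ and $\gb(\psi) \le \gb(h)$, so
\[
\alpha(\phi) + \gb(\psi) \; \le \; \gb(h) - \alpha(h) \; \le \; 0,
\]
where the final inequality is exactly (ii) applied to the increasing function $h$. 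This will conclude the argument.

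The main obstacle is precisely to identify the correct monotone envelope $h$: this is what turns the feasibility question about joint distributions into a scalar comparison between $\alpha(h)$ and $\gb(h)$ covered by the hypothesis. Once this reduction is made, the existence of the monotone coupling follows from an off-the-shelf duality theorem and no further combinatorial work is needed.
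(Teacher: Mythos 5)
Your argument is correct and follows the route the paper itself indicates: the paper gives no proof of this lemma, merely citing Kantorovich duality from Villani, and the monotone-envelope construction $h(\go)=\max_{\go'\le\go}\psi(\go')$ you introduce is exactly the (routine but nontrivial) reduction needed to obtain the Strassen-type characterization from the general duality theorem. One implicit point worth making explicit: to pass from ``zero optimal transportation cost'' to ``existence of a monotone coupling'' you need the infimum over couplings to be attained, which holds automatically here since $\gO$ is finite, so that the set of couplings is a compact polytope and the cost $\pi\mapsto\int c\,\mathrm{d}\pi$ is linear.
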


A consequence of Proposition~\ref{orderpreserv} is that if $\nu$ and
$\nu'$ are two probability measures on $S_N$,
then
%
%
\begin{equation}
\nu\succeq\nu' \quad\Rightarrow\quad\forall t\geq0,\qquad
P^{\nu
}_t \succeq P^{\nu'}_t.
\end{equation}

Let us now mention a simple tool to produce stochastic couplings.

%
\begin{lemma}\label{limitlema}
Let $\Omega$ be a finite set and $(\omega^1_t)_{t\geq0}$ and
$(\omega
^2_t)_{t \geq0}$ be two stochastic\vspace*{2pt} processes on $\Omega$.
Assume that the distribution of $\omega^1_t$ and $\omega^2_t$
respectively converge toward two probability measures $\alpha$ and
$\beta$
when $t$ tends to infinity.

If one can find a coupling of the processes such that almost surely
\[
\forall t\geq0,\qquad\omega^1_t\geq
\omega^2_t,
\]
then
\[
\alpha\succeq\beta.
\]
\end{lemma}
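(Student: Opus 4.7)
The plan is to reduce the statement to Lemma \ref{transport} and then pass to the limit using boundedness of functions on the finite set $\gO$. Specifically, to establish $\alpha\succeq \gb$, it suffices by Lemma \ref{transport} to verify that $\alpha(f)\geq \gb(f)$ for every increasing function $f:\gO\to\bbR$, so the strategy is to prove this functional inequality by taking $t\to\infty$ in the corresponding inequality at finite time.

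First I would fix an increasing function $f$ on $\gO$. Using the coupling provided by the hypothesis, since $\go^1_t\geq \go^2_t$ almost surely, monotonicity of $f$ gives $f(\go^1_t)\geq f(\go^2_t)$ almost surely, and hence after taking expectation under the coupling,
\begin{equation}
\bbE[f(\go^1_t)]\geq \bbE[f(\go^2_t)], \qquad \forall t\geq 0.
\end{equation}
Next, because $\gO$ is finite, $f$ is bounded, and the assumed convergence in distribution of the marginals $\go^1_t$ and $\go^2_t$ to $\alpha$ and $\gb$ respectively implies convergence of expectations of $f$; more concretely, since convergence in distribution on a finite set reduces to pointwise convergence of the mass functions, one simply writes $\bbE[f(\go^i_t)]=\sum_{\go\in\gO}f(\go)\bbP(\go^i_t=\go)$ and passes to the limit term by term. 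Letting $t\to\infty$ in the displayed inequality then yields $\alpha(f)\geq \gb(f)$, as required.

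There is really no serious obstacle: the argument is a routine combination of the duality characterization of stochastic domination with the trivial fact that weak limits preserve expectations of bounded functions. An equivalent alternative route would be to observe that, since $\gO\times\gO$ is finite, the set of joint laws of $(\go^1_t,\go^2_t)$ is relatively compact, extract a subsequential limit $\pi$, check that its marginals are necessarily $\alpha$ and $\gb$ by assumption, and note that the closed (in fact, since $\gO$ is finite, clopen) set $\{(\go_1,\go_2):\go_1\geq \go_2\}$ has $\pi$-mass one since each approximating law assigns it mass one. Either way, the key conceptual point is simply that monotonicity is preserved under convergence in distribution when the state space is finite.
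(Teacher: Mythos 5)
Your primary argument is correct but takes a slightly different route than the paper. The paper does not invoke Lemma \ref{transport}: it works directly with the joint law $\pi_t$ of $(\go^1_t,\go^2_t)$ under the given coupling, notes that each $\pi_t$ is supported on the closed set $\cD=\{(\go_1,\go_2) : \go_1\ge\go_2\}$, extracts a subsequential limit $\pi$ by compactness of the space of probability measures on the finite set $\gO\times\gO$, observes that $\pi$ is still supported on $\cD$ and has marginals $\alpha$ and $\gb$, and concludes that $\pi$ is the desired monotone coupling. This is exactly the ``alternative route'' you sketch at the end of your proposal. Your main argument instead fixes an increasing test function $f$, uses the coupling to get $\bbE[f(\go^1_t)]\ge\bbE[f(\go^2_t)]$ for each $t$, passes to the limit (trivially, since $\gO$ is finite), and concludes via the Kantorovich duality characterization of Lemma \ref{transport}. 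Both proofs are valid and elementary on a finite state space; the trade-off is that the paper's route produces the dominating coupling explicitly and needs no appeal to the duality lemma, whereas yours delegates the existence of the coupling to Lemma \ref{transport}, at the cost of being formally circular-looking only if one worries that the proof of that duality itself uses a compactness step --- a non-issue here since Lemma \ref{transport} is already established in the paper. Either proof would be acceptable.
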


\begin{pf}
Let $\pi_t$ be the law of $(\omega^1_t, \omega^2_t)$ under the
coupling given by the assumption of the lemma.
For all $t\geq0$, $\pi_t$ is supported by
\[
\mathcal D= \bigl\{ \bigl(\omega^1,\omega^2 \bigr)\in
\Omega_2 \vert\omega^1\geq\omega^2 \bigr\}.
\]

As $\pi_t$ lives on a compact space (for the topology induced by the
total variation distance),
it has a least one limit point which we call $\pi$ and is supported on
$\mathcal{D} $. The measure $\pi$ provides a coupling proving $\alpha
\succeq
\beta$.
\end{pf}

\subsection{Correlation inequalities and the FKG inequality}

The preservation of monotonicity by the dynamics will be used in
various ways over the course of our proof.
One of the important tools we will use are the correlation
inequalities, which roughly means that conditioning $\mu$ on an
increasing event makes all the other increasing events more likely.
First let us recall a classical result for probability laws on $\mathbb
{R} $.

%
\begin{lemma}\label{correlineq}
Let $f$ and $g$ be two increasing real functions of a real variable and
$X$ be a real random variable of law $P$.
We have
%
%
\begin{equation}
\label{correl} E \bigl[f(X)g(X) \bigr]\geq E \bigl[f(X) \bigr]E
\bigl[g(X)
\bigr].
\end{equation}
\end{lemma}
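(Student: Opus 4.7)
The plan is to give the classical two-independent-copies argument (often called the Chebyshev sum inequality for a single variable, a baby case of FKG). The main observation is that for any two monotone functions of the same variable, the quantity $(f(x)-f(y))(g(x)-g(y))$ is nonnegative for every pair $(x,y)\in\bbR^2$, because both factors change sign simultaneously when $x$ and $y$ are interchanged. Once this pointwise inequality is in hand, the lemma follows by integrating with respect to a product measure.

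More precisely, I would introduce an auxiliary random variable $Y$, independent of $X$, with the same law $P$. Since $f$ and $g$ are both increasing, for every realization of $(X,Y)$ the two real numbers $f(X)-f(Y)$ and $g(X)-g(Y)$ have the same sign (they are both nonnegative when $X\ge Y$ and both nonpositive when $X\le Y$), so
\begin{equation}
(f(X)-f(Y))\,(g(X)-g(Y))\ge 0 \quad \text{almost surely.}
\end{equation}
Taking expectations and expanding the product, one gets
\begin{equation}
E[f(X)g(X)]+E[f(Y)g(Y)]\ge E[f(X)g(Y)]+E[f(Y)g(X)].
\end{equation}
By independence of $X$ and $Y$ and the fact that $X$ and $Y$ share the same law $P$, the two terms on the left-hand side both equal $E[f(X)g(X)]$, while each of the two terms on the right-hand side factorises as $E[f(X)]\,E[g(X)]$. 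Dividing by $2$ yields exactly \eqref{correl}.

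There is no real obstacle here: the only conceptual point is to notice that working with a symmetric function of two independent copies automatically kills the non-product terms, which is the standard trick for this one-variable FKG-type inequality. The argument uses no topological or measurability assumption beyond what is needed for $f(X)$ and $g(X)$ to have finite expectations and a well-defined product expectation; if one wants to be fully rigorous one should just add a truncation argument $f_n:=(-n)\vee f\wedge n$, $g_n:=(-n)\vee g\wedge n$ and pass to the limit by monotone/dominated convergence, but this is not needed if, as will be the case in the applications in this paper, $f$ and $g$ are bounded.
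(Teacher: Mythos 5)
Your proof is correct and is essentially identical to the paper's: introduce an independent copy of $X$, observe that $(f(X)-f(X'))(g(X)-g(X'))\ge 0$ almost surely, and expand the expectation. The paper states this in one line; you have merely spelled out the expansion and added the (reasonable but unnecessary here, since the applications are on finite spaces) remark about integrability.
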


\begin{pf}
Consider $X'$ an independent copy of $X$, and expand the inequality
$ E[(f(X)-f(X'))(g(X)-g(X'))]\geq0$.
\end{pf}

Inequality \eqref{correl} is not true in general for all the notions
of partial order,
but a generalization of it exists for ``distributive lattices,'' the so
called Fortuin--Kasteleyn--Ginibre or FKG inequality, introduced and
proved in \cite{cfFKG}.

Unfortunately, $S_N$ is not a distributive lattice. More precisely,
if one defines for $\sigma$ and $\sigma'$ in $S_N$, $\operatorname
{\mathbf{min}}(\widetilde
\sigma, \widetilde\sigma')$ and $\operatorname{\mathbf
{max}}(\widetilde\sigma, \widetilde\sigma
')$ by
%
%
\begin{eqnarray}
\label{defveeperm} %
\operatorname{\mathbf{min}} \bigl( \widetilde\sigma,
\widetilde\sigma' \bigr) (x,y)&:=&\min\bigl( \widetilde\sigma
(x,y), \widetilde\sigma'(x,y) \bigr),
\nonumber
\\[-8pt]
\\[-8pt]
\operatorname{\mathbf{max}} \bigl(\widetilde\sigma, \widetilde
\sigma
' \bigr) (x,y)&:=&\max\bigl(\widetilde\sigma(x,y), \widetilde
\sigma'(x,y) \bigr),
\nonumber
\end{eqnarray}
then $\operatorname{\mathbf{min}}(\widetilde\sigma, \widetilde
\sigma')$ and $\operatorname{\mathbf{max}}(\widetilde
\sigma, \widetilde\sigma')$ are not necessarily images of elements
in $S_N$.
However, the proof of \cite{cfHolley} can be adapted to our case.

%
\begin{proposition}[(The FKG inequality for permutations)] \label{FKGbis}
For any pair of increasing functions $f$ and $g$ defined on $S_N$,
%
%
\begin{equation}
\label{permut1} \mu\bigl(f(\sigma)g(\sigma) \bigr)\geq\mu\bigl
(f(\sigma) \bigr)
\mu\bigl(g(\sigma) \bigr).
\end{equation}
\end{proposition}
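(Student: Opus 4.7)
The strategy will be a Metropolis-type adaptation of the coupling argument in \cite{cf:Holley}. A soft perturbation $h \leadsto h + \gep$ with $\gep \downarrow 0$ at the end will reduce Proposition \ref{FKGbis} to the following stochastic dominance: for every strictly positive increasing function $h : S_N \to (0,\infty)$, the biased probability measure $\mu_h := h\mu/\mu(h)$ satisfies $\mu_h \succeq \mu$. Granting this, Lemma \ref{transport} applied to an arbitrary increasing $f$ gives $\mu(fh) = \mu(h)\mu_h(f) \ge \mu(h)\mu(f)$; applied with $h := g + \gep$ for an increasing $g\ge 0$ and then letting $\gep \downarrow 0$, this yields \eqref{permut1}.

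To obtain the dominance I will couple two continuous-time Markov chains, both started from $\ind$ (the maximum of $(S_N,\ge)$): the standard AT-shuffle $(\sigma^2_t)_{t\ge 0}$ with generator $\cL$, and a Metropolis-type chain $(\sigma^1_t)_{t\ge 0}$ reversible with respect to $\mu_h$, whose rate of transition from $\sigma$ to $\sigma\tau_x$ equals $\min\bigl(1,h(\sigma\tau_x)/h(\sigma)\bigr)$. Because $h>0$ the biased chain is irreducible, so $\sigma^1_t \to \mu_h$ and $\sigma^2_t \to \mu$ in distribution. Once the coupling is arranged so that $\sigma^1_t \ge \sigma^2_t$ for every $t \ge 0$, Lemma \ref{limitlema} delivers $\mu_h \succeq \mu$.

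The coupling will use the graphical construction of Section \ref{graphix}: both chains share the clock process $\cT$ and the coin field $(U^x_n)$, and I introduce an independent family of uniform $[0,1]$ variables $(V^x_n)$ driving the Metropolis rejection step for $\sigma^1$. Since $h$ is increasing and a $U^x_n=1$ update moves $\sigma$ upward in the order, $h$ cannot decrease along such a move and both chains always accept it; when $U^x_n=0$ the update moves $\sigma$ downward, and $\sigma^1$ accepts only if $V^x_n \le h(\sigma^1_{t^-}\tau_x)/h(\sigma^1_{t^-})$. Order preservation follows by a short induction on clock rings: whenever both chains perform the update, Proposition \ref{orderpreserv} preserves the order; in the only other case ($\sigma^1$ rejects a $U^x_n=0$ move that $\sigma^2$ carries out), $\sigma^1$ stays put while $\sigma^2$ drops, so the inequality can only widen.

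The main obstacle is the non-distributivity of $(S_N,\ge)$ noted around \eqref{defveeperm}: the pointwise maximum and minimum of $\tilde\sigma$ and $\tilde\sigma'$ need not be the height function of any permutation, so Holley's original lattice condition is not directly applicable. The workaround above bypasses the lattice condition entirely by reading the inequality off an order-preserving coupling of trajectories; monotonicity is propagated purely through Proposition \ref{orderpreserv}, and no hypothesis on joins or meets of $\tilde\sigma$ is ever required. The positivity assumption on $h$, needed to keep the biased chain irreducible, is restored at the end by the soft limit $\gep \downarrow 0$.
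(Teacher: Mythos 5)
Your proof is correct, and it takes a genuinely different route from the paper's. The paper first uses the layer-cake decomposition to reduce \eqref{permut1} to the set version $\mu(A\cap B)\ge \mu(A)\mu(B)$ for increasing $A,B$, and then couples the AT-shuffle with a \emph{reflected} chain (the one that censors all transitions leaving $A$); order preservation of the graphical construction plus Lemma \ref{limitlema} then delivers $\mu(\cdot\,|\,A)\succeq\mu$. You instead work directly at the level of functions: you establish $\mu_h\succeq\mu$ for every strictly positive increasing $h$ by coupling the AT-shuffle with a Metropolis chain reversible for $\mu_h$, and then recover the inequality for an arbitrary increasing $g$ by the perturbation $h=g+\gep$ (noting that the $\gep$-terms cancel identically, so the limit is in fact unnecessary once one observes WLOG $g\ge 0$). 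The two couplings exploit order preservation in the same way — your case analysis is complete, since a rejected move can only occur for a $U^x_n=0$ clock ring with $\sigma^1$ sorted at $x$, in which case $\sigma^1$ stays put while $\sigma^2$ weakly drops, and in the accepted case Proposition \ref{orderpreserv} applies verbatim — but your Metropolis chain needs the auxiliary acceptance field $(V^x_n)$, whereas the paper's reflected chain is a deterministic function of $(\cT,U)$. What your approach buys is the more general intermediate dominance $\mu_h\succeq\mu$ and a clean bypass of the reduction to increasing sets; what the paper's buys is a slightly leaner coupling with no extra randomness. Both are sound.
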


The proof is postponed to Section~\ref{afkg}.

\subsection{The censoring inequality}\label{censorsec}

The censoring inequality in a result established by Peres and Winkler
\cite{cfPW}, Theorem~1.1, for ``monotone systems'' is a notion which is
a slight generalization of Glauber dynamics for spin systems with
totally a ordered spin space.

What the inequality says is that canceling some of the spins updates
has the effect of delaying the mixing.
Unfortunately, the AT shuffle is NOT a monotone system in the
Peres/Winkler sense. However, we can adapt the proof of the result to
our setup.
Before stating the result, we introduce some terminology and notation.
A {\sl censoring scheme} is a c\`adl\`ag function
\[
\mathcal{C} \dvtx\mathbb{R} ^+\to\mathcal P \bigl(\{1,\dots,N-1\}
\bigr),
\]
where $\mathcal P(\Omega)$ is the set of subsets of $\Omega$.

The censored dynamics with scheme $\mathcal{C}$ is the dynamics
obtained from
the graphical construction of Section~\ref{graphix},
except that if $\mathcal{T} ^x$ rings at time $t$, the update is
performed if
and only if $x\in\mathcal{C}(t)$.

It is quite natural to think that each time a clock rings, it brings
$\sigma_t$ ``closer to equilibrium'' and hence that
censoring will only make convergence to the equilibrium slower. The
censoring inequality establishes that this is true if one starts
from a measure whose density is an increasing function.

Given censoring scheme $\mathcal C$ and $\nu$ a probability
distribution on $S_N$, let $P_t^{\nu,\mathcal C}$ denote the distribution
of $\sigma_t$, which has performed the censored dynamics up to time
$t$ starting with initial distribution $\nu$.
We say that a probability law $\nu$ on $S_N$ is increasing if $\sigma
\mapsto\nu(\sigma)$ is an increasing function of $\sigma$.

%
\begin{proposition}[{(From \cite{cfPW}, Theorem~1.1)}]\label{censor}
If $\nu$ is increasing, then for all $t\geq0$,
%
%
\begin{equation}
\bigl\llVert P_t^{\nu,\mathcal C}-\mu\bigr\rrVert\geq\bigl
\llVert
P_t^{\nu}-\mu\bigr\rrVert.
\end{equation}
\end{proposition}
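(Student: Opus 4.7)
The plan is to adapt the Peres--Winkler argument \cite{cf:PW} to our non-spin setting, using the order-preserving graphical construction of Section~\ref{graphix}. The starting observation is that a single clock ring at $x$, once one averages over the associated Bernoulli $U^x_n$, acts on the density $h := \nu/\mu$ by uniform averaging on the two-element orbit $\{\sigma,\sigma\tau_x\}$:
\[
 Q_x h(\sigma) := \tfrac{1}{2}\bigl(h(\sigma) + h(\sigma\tau_x)\bigr).
\]
A direct inspection of the maps $\widetilde{\sigma}$ shows that $\sigma$ and $\sigma\tau_x$ are always comparable in the partial order of Section~\ref{monotool}: the sorted configuration (call it $\sigma^+$) is the larger one and the unsorted (call it $\sigma^-$) the smaller, and these are precisely the outputs of the deterministic sort and reverse-sort moves of the graphical construction applied to the orbit.

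Three ingredients then drive the proof. First, \emph{$Q_x$ preserves monotonicity of the density}: if $h$ is increasing and $\sigma_1 \geq \sigma_2$, Proposition~\ref{orderpreserv} applied to the sort move yields $\sigma_1^+ \geq \sigma_2^+$, and likewise $\sigma_1^- \geq \sigma_2^-$, so $Q_xh(\sigma_1) \geq Q_xh(\sigma_2)$. Second, \emph{$Q_x$ contracts the associated measure in stochastic dominance}: if $h$ is monotone then $Q_x\nu \preceq \nu$. Indeed, since $\sigma^+ \geq \sigma^-$, any increasing set $A$ partitions the orbits into three types (both, neither, only $\sigma^+ \in A$), and only the last contributes to $Q_x\nu(A) - \nu(A)$, with net value $\tfrac{1}{2}(h(\sigma^-) - h(\sigma^+))\mu(\sigma^+) \leq 0$. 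Third, $Q_x$ is itself order-preserving as a Markov kernel and therefore preserves stochastic dominance: this follows from the decomposition $Q_x = \tfrac{1}{2}(\text{sort}) + \tfrac{1}{2}(\text{reverse sort})$ together with the order-preservation of each move (Proposition~\ref{orderpreserv}).

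With these ingredients in hand I condition on a realization $\omega$ of the clock process $\cT$, which yields a finite ordered sequence $(x_1,\dots,x_n)$ of sites whose clocks ring in $[0,t]$. Conditional on $\omega$, the densities (with respect to $\mu$) of $P_t^{\nu}$ and $P_t^{\nu,\cC}$ are respectively $K_\omega h := Q_{x_n}\cdots Q_{x_1}h$ and $K_\omega^{\cC}h$, where in the latter each $Q_{x_i}$ is replaced by the identity whenever $x_i\notin\cC(\cT^{x_i}_{n_i})$. An induction on $i = 0, \ldots, n$ shows that after the first $i$ operations both densities remain increasing and the censored measure stochastically dominates the uncensored one: for ``kept'' updates one invokes ingredient 3, and for ``skipped'' ones ingredient 2 combined with the induction hypothesis. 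Averaging over $\omega$ preserves both properties (they are stable under mixtures), so $P_t^{\nu,\cC}$ has an increasing density and $P_t^{\nu,\cC} \succeq P_t^{\nu}$. The conclusion follows from the elementary observation that for measures $\rho \succeq \rho'$ with increasing densities, the set $A := \{\rho'/\mu > 1\}$ is increasing, so
\[
 \|\rho' - \mu\|_{TV} = \rho'(A) - \mu(A) \leq \rho(A) - \mu(A) \leq \|\rho - \mu\|_{TV}.
\]

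The main obstacle is the second ingredient: the partial order on $S_N$ is not a distributive lattice, so the Holley/FKG approach used in Proposition~\ref{FKGbis} does not apply directly. The crucial structural point that saves the argument is that each orbit $\{\sigma, \sigma\tau_x\}$ of a single swap is a two-element \emph{chain}, which enables the clean orbit-by-orbit accounting used for ingredient 2 and bypasses the non-lattice obstruction.
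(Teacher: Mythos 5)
Your proof is correct and follows essentially the same path as the paper's argument in Appendix A.2. Your three ingredients are exactly the paper's Lemma \ref{conservinc} (preservation of increasing density and $\theta_x(\nu)\preceq\nu$), Lemma \ref{microdil} (preservation of stochastic domination), and Lemma \ref{increden} (TV comparison via increasing sets), and your conditioning on the clock process followed by an induction on the update sequence mirrors Proposition \ref{detcensor}; the only cosmetic differences are that you argue ingredient 2 orbit-by-orbit with increasing sets rather than via increasing functions, and you run a single induction maintaining both invariants instead of the paper's reduction to omitting one update at a time.
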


The proof is postponed to Section~\ref{acensor}

The censoring inequality has been used in a variety of contexts to
bound the mixing times of Markov chains. The strategy is usually to
cook up a
censoring scheme which allows one to have better control over where the
dynamics goes without slowing it down to much. We refer to the
introduction of \cite{cfPW}
for numerous applications of this tool.

\subsection{Projection and monotonicity}

In our proof we sometimes have to work with projections of $\widetilde
\sigma$ on one or a few coordinates.
In this section we show that if $\nu$ is an increasing probability
measure on $S_N$, then its
projections have increasing densities with respect to the projections
of the equilibrium measure.

For $i\in\{0,\dots, K\}$, we set
%
%
\begin{equation}
\label{defxi} x_i:=\lceil i N/K\rceil.
\end{equation}
We define $\widehat\sigma$, the semi-skeleton of $\sigma\in S_N$ defined
on $\{0,\dots,N\}\times\{0,\dots,K\}$, by
%
%
\begin{equation}
\label{semiskel} \widehat\sigma(x,j):=\widetilde\sigma(x,x_j).
\end{equation}
We call $\widehat S_N$ the set of admissible semi-skeletons (the image of
$S_N$ by this transformation).
We define the skeleton $\bar\sigma\in\mathbb{R} ^{\{0,\dots,K\}
^2}$ of a
permutation $\sigma\in S_N$ to be
%
%
\begin{equation}
\label{barsigma} \bigl(\bar\sigma(i,j) \bigr)_{0\leq i,j\leq K}:=
\bigl(\widetilde
\sigma(x_i,x_j) \bigr)_{0\leq
i,j\leq K}.
\end{equation}
We call
\[
\bar S_N :=\{ \bar\sigma\vert\sigma\in S_N \}
\]
the set of admissible skeletons. We equip $\bar S_N$ with the natural order
\[
\bar\sigma\geq\bar\sigma' \quad\Leftrightarrow\quad\bigl(
\forall i,j \in\{0,\dots, K\}, \bar\sigma(i,j)\geq\bar\sigma'(i,j)
\bigr),
\]
and do the same for $\widehat S_N$.
Given $\nu$, a probability measure on $S_N$, we write $\bar\nu$ for
the image measure on $\bar S_N$
of $\nu$ by the skeleton projection and $\widehat\nu$ for the image
measure of the semi-skeleton. We write $\bar\nu_{i,j}$ for
the image measure of $\nu$ by the projection
$\sigma\mapsto\bar\sigma(i,j)$. In particular $\bar\mu$ and $\bar
\mu_{i,j}$ denote the projections
of the equilibrium measure.

%
\begin{rem}
For $N=52$ and $K=2$, the semi-skeleton encodes the positions of the
red cards in the decks, while the skeleton (which is one dimensional)
indicates the number of red cards in the first half of the pack.
Note that while $(\widehat\sigma_t)_{t\geq0}$ is a Markov chain,
$(\bar
\sigma_t)_{t\geq0}$ is not.
\end{rem}

%
\begin{proposition}[(Preservation of monotonicity by projection)]\label
{projecmon}
\begin{itemize}[(iii)]
\item[(i)]
Consider $\bar\sigma^1, \bar\sigma^2\in\bar S_N$.
If $\bar\sigma^1\geq\bar\sigma^2$, then
%
%
\begin{equation}
\label{nastro} \mu\bigl(\cdot\vert\bar\sigma=\bar\sigma^1
\bigr)
\succeq\mu\bigl(\cdot\vert\bar\sigma=\bar\sigma^2 \bigr).
\end{equation}

\item[(ii)] Given $(i,j)\in\{0,\dots, K\}^2$ and $z_1\leq z_2$, two
admissible values for $\bar\sigma(i,j)$, we have
%
%
\begin{equation}
\label{dominus} \mu\bigl(\cdot\vert\bar\sigma(i,j)=z_1 \bigr)
\succeq\mu\bigl(\cdot\vert\bar\sigma(i,j)=z_2 \bigr).
\end{equation}

\item[(iii)] If $\nu$ an increasing probability measure on $S_N$,
then the density
$\bar\nu/\bar\mu$ is an increasing function on $\bar S_N$.

\item[(iv)] If $\nu$ an increasing probability measure on $S_N$, then
$\bar\nu_{i,j}/\bar\mu_{i,j}$ is an
increasing function on the set of admissible value for $\bar\sigma(i,j)$.
\end{itemize}
\end{proposition}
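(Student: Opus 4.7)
The plan is to treat (i) as the core statement and to derive the others from it. Parts (iii) and (iv) follow from (i) and (ii) respectively via Kantorovich duality (Lemma \ref{transport}): writing $(\bar\nu/\bar\mu)(\bar\sigma) = \mu(\nu/\mu\mid \bar\sigma(\cdot)=\bar\sigma)$ and using that $\nu/\mu$ is increasing on $S_N$, the stochastic dominance of conditional measures supplied by (i) (resp.\ (ii)) translates, via Lemma \ref{transport}, into monotonicity of the conditional expectation in $\bar\sigma$ (resp.\ in the coordinate $\bar\sigma(i,j)$).

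For (i), I would reduce the general comparison $\bar\sigma^1\ge\bar\sigma^2$ to the case of a unit increment at a single skeleton coordinate $(i_0,j_0)$: once the case $\bar\sigma^1 = \bar\sigma^2+e_{(i_0,j_0)}$ is handled by a monotone coupling, composing couplings along a chain of unit increments connecting $\bar\sigma^2$ to $\bar\sigma^1$ yields the full statement. A direct finite-difference computation shows that a unit increment at $(i_0,j_0)$ shifts the four cell counts $c_{i_0,j_0}$ and $c_{i_0+1,j_0+1}$ up by one and $c_{i_0,j_0+1}$ and $c_{i_0+1,j_0}$ down by one, leaving the other cell counts untouched, where $c_{ij}:=\#\{z\in(x_{i-1},x_i]:\sigma(z)\in(x_{j-1},x_j]\}$. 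The coupling is then defined as follows: sample $\sigma^2\sim\mu(\cdot\mid\bar\sigma=\bar\sigma^2)$, choose uniformly at random a pair of positions $(p_1,p_2)$ with $p_1\in (x_{i_0-1},x_{i_0}]$ carrying a label in $(x_{j_0},x_{j_0+1}]$ and $p_2\in (x_{i_0},x_{i_0+1}]$ carrying a label in $(x_{j_0-1},x_{j_0}]$, and exchange these two labels to obtain $\sigma^1$.

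Two things must be verified. Monotonicity is immediate: a pointwise check shows that the exchange shifts $\tilde\sigma(x,y)$ by exactly $\pm 1$ on the rectangle $p_1\le x<p_2$, $\sigma^2(p_2)\le y<\sigma^2(p_1)$ and leaves it unchanged elsewhere, which amounts to a single step in the partial order on $S_N$ (the sign fixes the direction). Exchangeability, on the other hand, is the delicate point: one must check that the law of the output $\sigma^1$ is exactly $\mu(\cdot\mid\bar\sigma=\bar\sigma^1)$. Using the explicit count of the fiber
\begin{equation*}
\bigl|\{\sigma\in S_N:\bar\sigma(\sigma)=\bar\sigma^*\}\bigr| = \prod_{i=1}^K(x_i-x_{i-1})!\cdot\prod_{j=1}^K\frac{(x_j-x_{j-1})!}{\prod_{i=1}^K c^*_{ij}!},
\end{equation*}
one checks that each $\sigma^1$ in the target fiber is produced by exactly $(c^2_{i_0,j_0}+1)(c^2_{i_0+1,j_0+1}+1)$ pre-images under the procedure, a quantity depending only on $\bar\sigma^1$; this balances exactly with the number of valid swap pairs $c^2_{i_0,j_0+1}\cdot c^2_{i_0+1,j_0}$ available in a generic $\sigma^2$ and the ratio of fiber sizes, yielding the uniform law on the target fiber.

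Statement (ii) is proved by the same elementary-swap argument specialized to a single coordinate: increasing $\bar\sigma(i,j)$ by $1$ is realized by a uniform exchange between a label $\le x_j$ in a position $>x_i$ and a label $>x_j$ in a position $\le x_i$, and the exchangeability computation reduces to the analogous identity for the $2\times 2$ partition top/bottom vs.\ small/large. The main obstacle throughout is this exchangeability verification; once it is in place, the monotonicity of each elementary swap is an immediate pointwise check, and iterating unit increments to reach arbitrary comparable skeletons (or values of a single coordinate) is routine.
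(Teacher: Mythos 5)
Your derivation of (iii) and (iv) from (i) and (ii) via Kantorovich duality is correct and matches the paper's. The elementary-swap coupling for a single unit skeleton increment is also sound as far as it goes: each target $\sigma^1$ has exactly $c^1_{i_0,j_0}\,c^1_{i_0+1,j_0+1}$ preimages, each generic $\sigma^2$ admits exactly $c^2_{i_0,j_0+1}\,c^2_{i_0+1,j_0}$ valid swaps, these balance against the ratio of fiber sizes, and a single position-label exchange moves $\tilde\sigma$ by $\pm 1$ on a rectangle, hence is a monotone step.

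The gap is the reduction to unit increments. You implicitly assume that whenever $\bar\sigma^1>\bar\sigma^2$ are admissible skeletons there is some $(i_0,j_0)$ for which $\bar\sigma^2+e_{(i_0,j_0)}$ is again admissible, so that a chain of unit increments exists. This fails. A unit increment at $(i_0,j_0)$ requires $c_{i_0,j_0+1}\geq 1$ and $c_{i_0+1,j_0}\geq 1$, and these constraints can block every available move simultaneously. Already for $N=K=3$, where the skeleton is the permutation itself and every fiber is a singleton, take $\bar\sigma^2$ to be the skeleton of the reversal $x\mapsto 4-x$ and $\bar\sigma^1$ that of the identity. The only admissible unit increments from $\bar\sigma^2$ produce the two $3$-cycles, and from each of those, every candidate unit increment pointing toward $\bar\sigma^1$ creates a negative cell count; the two transpositions lying strictly between $\bar\sigma^2$ and $\bar\sigma^1$ in the pointwise order are not reachable by unit increments. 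So there is no unit-increment chain here, even though the stochastic-domination statement itself is trivially true. In fact the pointwise order on skeletons is the (quotient of the) Bruhat order, and its covering relations are $2\times 2$ exchanges on rectangles of arbitrary side length, not just the adjacent $2\times 2$ moves your swap implements; a proof along these lines would need couplings for those long-range coverings too. The paper sidesteps this entirely by a different decomposition: it first conditions on the full semi-skeleton $\hat\sigma$, whose fiber is a product $\prod_{i=1}^K S_{\Delta x_i}$ with an explicit monotone parametrization, and then compares the conditional laws of $\hat\sigma$ given $\bar\sigma$ by running reflected (censored) dynamics on the two fibers started from their respective maximal elements and passing to the limit via Lemma \ref{limitlema}. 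No combinatorial chain decomposition is required.
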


The proof is postponed to Section~\ref{aprojecmon}.

\section{Some additional tools}\label{nomonotool}

In this section we present a connection between the evolution of
$\widetilde\sigma$ and the heat equation, which is an essential
ingredient of the proof,
some nonoptimal estimates on the mixing time, which will use as an
input in the proof, and a technical result to decompose the total
variation distance.

\subsection{Connection with the heat equation}

If one follows the motion of one card only, we see a nearest neighbor
symmetric random walk on the set
$\{1,\dots,N\}$. This indicates a connection between the AT shuffle
and diffusions. We also find this connection when looking at the
evolution of
the mean $\widetilde\sigma_t(x,y)$.

As observed during the graphical construction,
the height $\widetilde\sigma_t(x,y)$ can only jump down when
$\widetilde
\sigma_t(\cdot,y)$ presents a local maximum at $x$, and up
when it presents a local minimum. In each case, this happens with rate one.
When computing the expected drift of $\widetilde\sigma_t(x,y)$, this gives
%
%
\begin{eqnarray}
\partial_t \mathbb{E} \bigl[\widetilde
\sigma_t(x,y) (t) \bigr]&=& \mathbb{E} [\mathbf{1}_{\{\widetilde
\sigma_t(x,y)> \max(\sigma
_t(x-1,y),\widetilde
\sigma_t(x+1,y))\}}
\nonumber
\\
&&\hphantom{\mathbb{E} [}{}- \mathbf{1}_{\{\widetilde\sigma
_t(x,y)< \min
(\sigma
_t(x-1,y),\widetilde\sigma_t(x+1,y))\}}]
\\
&=&\mathbb{E} \bigl[\widetilde\sigma_t(x-1,y)+\widetilde\sigma
_t(x+1,y)-2\widetilde\sigma_t(x,y) \bigr],
\nonumber
\end{eqnarray}
where the last equality follows from the definition of $\widetilde
\sigma$.
Hence the function $f$ defined by
%
%
\begin{equation}
\cases{ \{0,\dots,N\}^2\times\mathbb{R} _+ \to
\mathbb{R},
\cr
(x,y,t)\mapsto\mathbb{E} \bigl[\widetilde\sigma_t(x,y)
\bigr] } %
\end{equation}
is the solution of the one-dimensional discrete heat equation
%
%
\begin{equation}
\label{disheat} %
\cases{ \partial_t f=
\Delta_x f \qquad  \mbox{on }\{1,\dots,N-1\}\times\mathbb{R} _+,
\cr
f(0,t)=f(N,t)=0,
\cr
f(x,y,0)= \widetilde\sigma_0(x,y), } %
\end{equation}
where $\Delta_x$ denotes the discrete Laplacian acting on the $x$ coordinate
\[
\Delta_x f(x,y,t)= f (x+1,y,t)+f(x-1,y,t)-2f(x,y,t).
\]

%
\begin{lemma}\label{boudk}
For all $\sigma_0\in S_N$ and $t\geq0$ we have
%
%
\begin{equation}
\label{group} \max_{x\in\{0,\dots,N\}}\mathbb{E} \bigl[
\widetilde\sigma
_t(x,y) \bigr] \leq4\min(y,N-y) e^{-\lambda_N t },
\end{equation}
where
\[
\lambda_N:=2 \biggl(1-\cos\biggl(\frac{\pi}{N} \biggr)
\biggr)=\frac
{\pi^2}{N^2} \bigl(1+o(1) \bigr).
\]
In particular,
%
%
\begin{equation}
\label{group2} \max_{(x,y)\in\{0,\dots,N\}^2}\mathbb{E} \bigl[
\widetilde\sigma
_t(x,y) \bigr] \leq2N e^{-\lambda_N t }.
\end{equation}

For $\sigma_0=\mathbf{1}$ we have
%
%
\begin{equation}
\label{group3} \mathbb{E} \bigl[ \widetilde\sigma_t(x,y) \bigr
]\geq
\frac{\min
(y,N-y)}{\pi
}\sin\biggl( \frac{\pi x}{N} \biggr)e^{-\lambda_N t} .
\end{equation}
\end{lemma}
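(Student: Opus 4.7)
My plan is to identify $\bbE[\tilde\gs_t(x,y)]$ as the solution of the discrete heat equation \eqref{disheat} with Dirichlet boundary conditions and to combine the grand-coupling monotonicity of Proposition \ref{orderpreserv} with the explicit spectrum of the Dirichlet Laplacian on $\{0,\dots,N\}$. Since $\tilde\ind(x,y)\ge\tilde\gs(x,y)$ for every $\gs\in S_N$ (a direct consequence of the definition of $\tilde\gs$), Proposition \ref{orderpreserv} gives $\tilde\gs_t^{\ind}(x,y)\ge\tilde\gs_t^{\gs_0}(x,y)$ almost surely under the grand coupling, and taking expectations reduces all three displayed bounds to the single case $\gs_0=\ind$.

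For $\gs_0=\ind$, the initial data $g(x):=\tilde\ind(x,y)=\min(x,y)-xy/N$ vanishes at $x=0$ and $x=N$ and satisfies $-\gD g(x)=\ind_{\{x=y\}}$, so it is the Green's function of the Dirichlet Laplacian with source at $y$. Expanding in the orthonormal eigenbasis $\phi_k(x)=\sqrt{2/N}\sin(\pi k x/N)$ with eigenvalues $\gl_k=2(1-\cos(\pi k/N))$ and evolving under the heat semigroup yields
\[
\bbE[\tilde\ind_t(x,y)]=\frac{2}{N}\sum_{k=1}^{N-1}\frac{\sin(\pi k x/N)\sin(\pi k y/N)}{\gl_k}e^{-\gl_k t}.
\]

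For the lower bound \eqref{group3} I would compare with the explicit subsolution $u(x,t):=\frac{\min(y,N-y)}{\pi}\sin(\pi x/N)e^{-\gl_N t}$, which solves the heat equation with zero Dirichlet values. The key step is the pointwise inequality $u(x,0)\le g(x)$: splitting on whether $x\le y$ or $x\ge y$ and using $\sin(\pi x/N)=\sin(\pi(N-x)/N)$, this reduces to $\sin\theta\le\theta$ on $[0,\pi]$. Then $\bbE[\tilde\ind_t(\cdot,y)]-u(\cdot,t)$ solves the heat equation with non-negative Dirichlet initial data, so the maximum principle yields \eqref{group3} for all $t\ge 0$. For \eqref{group2}, the symmetric supersolution $v(x,t):=A\sin(\pi x/N)e^{-\gl_N t}$ with $A:=\sup_x g(x)/\sin(\pi x/N)=y(N-y)/(N\sin(\pi y/N))$ satisfies $v(x,0)\ge g(x)$ by construction, and the elementary estimate $\sin(\pi y/N)\ge 2\min(y,N-y)/N$ gives $A\le N/2$, implying the bound.

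The sharp bound \eqref{group} is the most delicate because the sine supersolution above yields only an $Ne^{-\gl_N t}$ estimate rather than $\min(y,N-y)e^{-\gl_N t}$ when $\min(y,N-y)$ is small. My plan is to split into two time regimes. For $\gl_N t\le\log 4$, the $L^\infty$-contractivity of the heat semigroup already gives $\bbE[\tilde\ind_t(x,y)]\le\|g\|_\infty=y(N-y)/N\le\min(y,N-y)\le 4\min(y,N-y)e^{-\gl_N t}$. For larger $t$, I would apply the $L^1\to L^\infty$ estimate $\bbE[\tilde\ind_t(x,y)]\le\|g\|_1\max_{x'}p_t(x,x')$, where $p_t$ is the Dirichlet heat kernel, together with $\|g\|_1=\sum_x g(x)=y(N-y)/2$ and the spectral bound $\max_{x,x'}p_t(x,x')\le\frac{2}{N}\sum_{k=1}^{N-1}e^{-\gl_k t}$. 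The main obstacle is controlling the contribution of the modes $k\ge 2$ uniformly; but in the range $\gl_N t\ge\log 4$ the ratio $\gl_k/\gl_N=(\sin(\pi k/(2N))/\sin(\pi/(2N)))^2$ is bounded below by a constant strictly greater than $1$ for $k\ge 2$, so $\sum_{k\ge 2}e^{-(\gl_k-\gl_N)t}$ is at most a small universal constant and the product gives $\bbE[\tilde\ind_t(x,y)]\le (1+o(1))\frac{y(N-y)}{N}e^{-\gl_N t}\le (1+o(1))\min(y,N-y)e^{-\gl_N t}$, comfortably within $4\min(y,N-y)e^{-\gl_N t}$.
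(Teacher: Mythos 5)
Your plan is in the same spirit as the paper's (Fourier decomposition on the sine eigenbasis of the discrete Dirichlet Laplacian, plus sub/supersolution comparison for \eqref{group3}), but there are genuine differences and one real gap.

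On the differences: the paper does not reduce to $\gs_0=\ind$. Instead it bounds the Fourier coefficients for an \emph{arbitrary} initial condition directly using $|\tilde\gs_0(x,y)|\le\min(y,N-y)$, so $|a_i|\le yN$, and then sums the series using the crucial inequality $\gl_{N,i}\ge i\,\gl_N$, obtaining $|f(x,t)|\le \frac{2y\,e^{-\gl_N t}}{1-e^{-\gl_N t}}$, which yields both \eqref{group} and \eqref{group2} after a single case split on $e^{-\gl_N t}\lessgtr 1/2$. Your monotone reduction to $\ind$ (correct, since $\tilde\ind$ is the pointwise maximal surface and Proposition~\ref{orderpreserv} transfers this to all times) and the Green's-function identification $-\gD g=\ind_{\{x=y\}}$ are valid and a bit more structural, but are not needed; and your separate supersolution argument for \eqref{group2} is also unnecessary since \eqref{group2} follows from \eqref{group} by $\max_y 4\min(y,N-y)=2N$. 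Your argument for \eqref{group3} coincides with the paper's.

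The gap is in your argument for \eqref{group} in the regime $\gl_N t\ge\log 4$. You invoke the $L^1\to L^\infty$ bound $\bbE[\tilde\ind_t(x,y)]\le\|g\|_1\max_{x'}p_t(x,x')\le\frac{y(N-y)}{N}\sum_{k=1}^{N-1}e^{-\gl_k t}$ and then claim that $\sum_{k\ge 2}e^{-(\gl_k-\gl_1)t}$ is a small universal constant because $\gl_k/\gl_1$ is bounded below by a constant $c>1$. A single spectral-gap constant does not control this sum: it has $N-2$ terms, and bounding each by $e^{-(c-1)\gl_1 t}\le 4^{-(c-1)}$ gives a total of order $N\cdot 4^{-(c-1)}$, so the final estimate would be of order $y(N-y)e^{-\gl_N t}$, not $\min(y,N-y)e^{-\gl_N t}$. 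What one actually needs is the growth $\gl_{N,k}\ge k\gl_{N,1}$ for all $1\le k\le N-1$ (the paper invokes exactly this), which makes the tail a genuine geometric series:
\begin{equation*}
\sum_{k\ge 2}e^{-(\gl_k-\gl_1)t}\le\sum_{j\ge 1}e^{-j\gl_1 t}=\frac{e^{-\gl_1 t}}{1-e^{-\gl_1 t}}\le\frac{1}{3}
\qquad\text{when }e^{-\gl_1 t}\le 1/4,
\end{equation*}
so that $\sum_k e^{-\gl_k t}\le\frac{4}{3}e^{-\gl_N t}$ and your $L^1\to L^\infty$ bound closes with a constant $\le 4$. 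The inequality $\gl_{N,k}\ge k\gl_{N,1}$ is elementary (writing $\gl_{N,k}=4\sin^2(k\gb)$, $\gb=\pi/(2N)$, one checks $\sin^2(k\gb)\ge k\sin^2\gb$ for $k\gb<\pi/2$, e.g.\ by induction via the identity $\sin^2(k\gb)-\sin^2((k-1)\gb)-\sin^2\gb=2\sin\gb\sin((k-1)\gb)\cos(k\gb)\ge 0$), but it must be stated and used; as written your estimate on the tail sum does not follow. Once this is supplied, your argument is correct, though somewhat more elaborate than the paper's direct Fourier computation.
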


The proof is postponed to Section~\ref{aboudk}.

\subsection{Wilson's upper bound on the mixing time}

Several times, we will use Wilson's upper bound as an input in our proof.
The result as it is cited is contained the proof of \cite{cfWilson},
Theorem~10.
For more details, see the proof of Proposition~\ref
{wilson2}.

%
\begin{proposition}\label{wilson}
For all $N$ sufficiently large, for all $\varepsilon>0$
%
%
\begin{equation}
d^N(t)\leq10N \exp(-t \lambda_N),
\end{equation}
where
\[
\lambda_N:=2 \bigl(1-\cos(\pi/N) \bigr).
\]
\end{proposition}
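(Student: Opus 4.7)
The plan is to prove this bound via the monotone grand coupling of Section \ref{graphix} combined with the heat-equation estimate of Lemma \ref{boudk}, following Wilson's original argument \cite[Theorem~10]{cf:Wilson}.

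First, under the grand coupling, Proposition \ref{orderpreserv} gives the sandwich $\sigma^{\sigma_{\min}}_t\le\sigma^\xi_t\le\sigma^\ind_t$ for every $\xi\in S_N$. Writing $\mu$ as the convex combination $\sum_\xi \mu(\xi) P^\xi_t$ and using convexity of total-variation together with Lemma \ref{caravar}, one obtains the reduction
\[
d^N(t)\;\le\;\|P^\ind_t-P^{\sigma_{\min}}_t\|_{TV}\;\le\;\bbP\bigl(\sigma^\ind_t\ne\sigma^{\sigma_{\min}}_t\bigr).
\]

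Second, I would turn this disagreement probability into a heat-equation estimate. Setting $h_t(x,y):=\tilde\sigma^\ind_t(x,y)-\tilde\sigma^{\sigma_{\min}}_t(x,y)\ge 0$, injectivity of $\sigma\mapsto\tilde\sigma$ makes the event $\{\sigma^\ind_t\ne\sigma^{\sigma_{\min}}_t\}$ equal to $\{h_t\not\equiv 0\}$, which decomposes over the $N-1$ SSEP level projections $\gamma^{(y)}$ as
\[
\bbP\bigl(\sigma^\ind_t\ne\sigma^{\sigma_{\min}}_t\bigr)\;\le\;\sum_{y=1}^{N-1}\bbP\bigl(h_t(\cdot,y)\not\equiv 0\bigr).
\]
By linearity, $\bbE[h_t(\cdot,y)]$ solves the one-dimensional discrete heat equation in $x$ with initial tent profile $h_0(x,y)=\min(x,y)-\max(0,x+y-N)$, and Lemma \ref{boudk} applied to both extremal initial conditions yields the pointwise decay $\bbE[h_t(x,y)]\le 8\min(y,N-y)e^{-\gl_N t}$.

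The main obstacle is to convert the per-$y$ heat-kernel estimates into a probability bound of the claimed linear size $O(Ne^{-\gl_N t})$, rather than the higher polynomial that a naive Markov inequality on $\sum_x h_t(x,y)$ would produce. Wilson's key device, which I would reproduce, is to expand $h_t(\cdot,y)$ in the sine basis of the discrete Laplacian on $\{0,\dots,N\}$: the tent profile $h_0(\cdot,y)$ has essentially all its Fourier mass on the principal mode $\sin(\pi x/N)$, so for $t$ past the relaxation time $\log N/\gl_N$ only this mode survives the heat semigroup, with amplitude controllable by a universal constant times $e^{-\gl_N t}$ independent of $y$. Combined with a second-moment/concentration estimate on the integer-valued process $h_t(\cdot,y)$, this gives $\bbP(h_t(\cdot,y)\not\equiv 0)\le Ce^{-\gl_N t}$ uniformly in $y$, and summing over the $N-1$ level sets while bookkeeping the constants yields $d^N(t)\le 10Ne^{-\gl_N t}$. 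A detailed implementation appears in \cite[Theorem~10]{cf:Wilson} and is revisited in the paper's Proposition \ref{wilson2}.
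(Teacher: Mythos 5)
Your proposal takes a genuinely different route from the paper's proof, and the final step has a gap.

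The paper proves Proposition~\ref{wilson} as the $k=N$ case of Proposition~\ref{wilson2}. That proof starts from the reduction \eqref{gramsci2}, $d^N(t)\le\max_{\xi,\xi'}\|P^\xi_t-P^{\xi'}_t\|_{TV}$, and then builds a specific, explicitly non-monotone and non-Markovian coupling of $\sigma^\xi_t$ and $\sigma^{\xi'}_t$ that is \emph{not} the grand coupling of Section~\ref{graphix}: the transposition at $(x,x+1)$ is applied simultaneously to both decks whenever at least one of the two sites holds the same card in both decks, and independently otherwise. Under this coupling, for each fixed card $i$ the pair $(X^i_t,Y^i_t)=\bigl((\sigma^\xi_t)^{-1}(i),(\sigma^{\xi'}_t)^{-1}(i)\bigr)$ is a Markov chain on $\{1,\dots,N\}^2$ that sticks on the diagonal, and Lemma~\ref{wilson9} diagonalizes this killed two-dimensional walk (eigenfunctions $u_{i,j}$, eigenvalues $\gl_{i,j,N}\ge(i+j)\gl_N$) to give $\bP_{x,y}[X_t\ne Y_t]\le 10\,e^{-\gl_N t}$ uniformly. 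The $N$ prefactor is then a clean union bound over the $N$ cards.

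You instead use the monotone grand coupling, the sandwich between extremal permutations, and a union bound over the $N-1$ level sets $y$. Besides being a structurally different decomposition, this does not deliver the bound as you sketch it. The claim that the principal-mode amplitude of $h_t(\cdot,y)$ is ``a universal constant times $e^{-\gl_N t}$ independent of $y$'' is not correct: the principal Fourier coefficient of the tent profile $h_0(\cdot,y)$ is of order $N\min(y,N-y)$, so even after the higher modes have died out, $\bbE\bigl[\sum_x h_t(x,y)\bigr]$ is of order $N\min(y,N-y)\,e^{-\gl_N t}$, and Markov's inequality on this non-negative integer gives only $\bbP\bigl(h_t(\cdot,y)\not\equiv 0\bigr)\lesssim N\min(y,N-y)\,e^{-\gl_N t}$, which sums in $y$ to $O(N^3 e^{-\gl_N t})$ --- precisely the naive polynomial you acknowledge needing to beat. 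The ``second-moment/concentration estimate'' you invoke to reach $C e^{-\gl_N t}$ per level, \emph{uniformly in $y$}, is the entire substance of the missing step and is not supplied; it is not what Wilson's Section~8.1 argument does (he tracks cards, not levels), and the paper itself remarks in Section~\ref{excluproc} that the merging-time estimate under the grand coupling is tight and had to be bypassed by a different coupling to gain anything. The per-object union bound that produces a prefactor linear in $N$ is over cards under the sticky coupling, not over levels under the grand coupling.
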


\subsection{Erasing the labels and decomposing the mixing
procedure}\label{labelerase}

Let us suppose for one moment that we change the labels assigned to the
cards in the following manner:
each card whose label previously belonged to $\{x_{i-1}+1,\dots,x_{i}\}
$, $i=1, \dots,K$ receives the label ${\bf i}$ (for $K=4$ and $N=52$,
we can think of this as differentiating only clubs, spades, hearts and
diamonds instead of looking at each individual card).
The pack of cards with the new labels is then described by the
semi-skeleton $\widehat\sigma$ described in \eqref{semiskel}.

It is quite intuitive that for $\sigma_t$ to reach equilibrium we need:
\begin{itemize}[(ii)]
\item[(i)] the semi-skeleton $\widehat\sigma_t$ to be close to its
equilibrium distribution;
\item[(ii)] conditionally to each semi-skeleton, we need that the
order of the card with label ${\bf i}$ to be close to uniformly distributed.
\end{itemize}
The aim of this short section is to make this intuitive claim rigorous;
see Lemma~\ref{cross}.

We introduce a transformation of the measures which has the effect of
making the card whose labels belongs to
$\{x_{i-1}+1,\dots,x_{i}\}$ indistinguishable.

Define $\widetilde S_N$ to be the largest subgroup of $S_N$ that leaves
all the sets $\{x_{i-1}+1,\dots,x_{i}\}$ invariant.
It is isomorphic to $\bigotimes_{i=1}^K S_{\Delta x_i}$ (recall that
$\Delta
x_i:=x_i-x_{i-1}$).

Given $\nu$ a probability measure on $S_N$, we define $\widetilde\nu
$ as
%
%
\begin{equation}
\label{tildenu} \widetilde\nu(\sigma)=\frac{1}{\prod_{i=1}^K
(\Delta x_i) !}\sum
_{\widetilde
\sigma\in\widetilde S_N} \nu(\widetilde\sigma\circ\sigma).
\end{equation}

Note that the semi-skeleton of $\sigma$
is left invariant by composition on the right by an element of
$\widetilde
S_N$ (in other words $\widehat S_N$ is in bijection with the set of
right-cosets of the subgroup $\widetilde S_N$).
Hence (recall that $\widehat\nu$ denotes the image law of $\nu$ for the
semi-skeleton projection) we have
%
%
\begin{equation}
\label{crck} \widetilde\nu(\sigma):=\frac{1}{\llvert \widetilde
S_N\rrvert }\widehat\nu(\widehat
\sigma).
\end{equation}
This leads to the following result:

%
\begin{lemma}\label{cross}
For all probability laws $\nu$ on $S_N$ we have
%
%
\begin{equation}
\llVert\widetilde\nu- \mu\rrVert_{\mathrm{TV}}=\llVert\widehat
\nu-\widehat
\mu\rrVert_{\mathrm{TV}},
\end{equation}
and as a consequence,
%
%
\begin{equation}
\llVert\nu-\mu\rrVert_{\mathrm{TV}} \leq\llVert\widehat\nu
-\widehat\mu
\rrVert_{\mathrm{TV}}+\llVert\nu-\widetilde\nu\rrVert_{\mathrm{TV}}.
\end{equation}
\end{lemma}

\begin{pf}
We have
%
%
\begin{equation}
2\llVert\widetilde\nu-\mu\rrVert_{\mathrm{TV}} =\sum
_{\xi\in\widehat S_N}\sum_{\{\sigma\in S_N \vert \widehat
\sigma=\xi\}
} \bigl\llvert
\widetilde\nu(\sigma)-\mu(\sigma) \bigr\rrvert.
\end{equation}
Now from \eqref{crck}, $\widetilde\nu$ is constant on $\{\sigma
\vert
\widehat
\sigma=\xi\}$
and thus
%
%
\begin{eqnarray}
2\llVert\widetilde\nu-\mu\rrVert_{\mathrm{TV}} &=&\sum
_{\xi\in\widehat S_N} \biggl\llvert\sum_{\{\sigma\in S_N \vert
\widehat
\sigma=\xi\}}
\widetilde\nu(\sigma)-\mu(\sigma) \biggr\rrvert
\nonumber
\\
&=&\sum_{\xi\in\widehat S_N} \biggl\llvert\sum
_{\{\sigma\in S_N \vert
\widehat
\sigma=\xi\}} \nu(\sigma)-\mu(\sigma) \biggr\rrvert
\\
&=&\sum_{\xi\in
\widehat
S_N} \bigl\llvert\widehat\nu(\xi)-
\widehat\mu(\xi) \bigr\rrvert= 2\llVert\widehat\nu-\widehat\mu
\rrVert
_{\mathrm{TV}}.
\nonumber
\end{eqnarray}
\upqed
\end{pf}

\section{Proof of Theorem \texorpdfstring{\protect\ref{mainres}}{2.2}: Upper bound for the mixing
time of the AT shufle}\label{proofmres}

\subsection{Strategy}

We are now ready to prove the asymptotics for the mixing time for the
AT shuffle.
As the lower bound is already known (\cite{cfWilson},\vadjust{\goodbreak} Theorem~6; see
also Section~\ref{lbexp} of the present paper), we only need to prove
in this section that
for every $\varepsilon>(0,1)$, $\delta>0$ for all $N$ sufficiently large,
%
%
\begin{equation}
\label{uopbound} d_N \biggl( (1+\delta)\frac{N^2}{2\pi^2} \log N
\biggr)\leq\varepsilon.
\end{equation}

Let us now explain how we plan to prove \eqref{uopbound}.
We run a censored dynamics with the following censoring scheme:
\begin{itemize}[(iii)]
\item[(i)] During a time $(\delta/3)\frac{N^2}{2\pi^2} \log N$ we
cancel the updates occurring at $x_i$,\vspace*{2pt} $i\in\{1,\dots,K-1\}$
with $K$ chosen to be $\lceil1/\delta\rceil$. According to
Proposition~\ref{wilson} this gives enough time to mix the order of
the set of cards whose label belongs to $\{x_{i-1}+1,\dots,x_i\}$.
\item[(ii)] Then, during a time $\frac{N^2}{2\pi^2}(1+\delta/3)
\log N$, we run the dynamics with no\vspace*{2pt} censoring. Using Lemma~\ref
{boudk} and monotonicity,
we prove that after such a time, the distribution of the skeleton $\bar
\sigma_t$ comes close to equilibrium (this is the most delicate part).
\item[(iii)] Finally during a time $(\delta/3)\frac{N^2}{2\pi^2}
\log N$, we censor the updates of the $x_i$s again.\vspace*{2pt} Using
Proposition~\ref{wilson}
and the fact that the skeleton is at equilibrium,
we prove that the dynamics puts the semi-skeleton $\widehat\sigma$ at
equilibrium.
\end{itemize}
After all these steps, the distribution of the semi-skeleton is close
to $\widehat\mu$ and the distribution of the order of the cards whose
label belongs
$\{x_{i-1}+1,\dots, x_i\}$ is close to uniform (for each $i$). Thus,
using Lemma~\ref{cross},
we can conclude that $\sigma_t$ has come close to equilibrium.
The censoring inequality (Proposition~\ref{censor}) guarantees that
$\sigma_t$ is even closer to equilibrium for the noncensored dynamics,
and this implies~\eqref{uopbound}.

\subsection{Decomposition of the proof}

Now let us turn the strategy we have exposed into mathematical statements.
Set
%
%
\begin{eqnarray}
t_1&:=&\frac{N^2}{2\pi^2}(\delta/3) \log N,
\nonumber
\\
t_2&:=&\frac{N^2}{2\pi^2}(1+2\delta/3) \log N,
\\
t_3&:=&\frac{N^2}{2\pi^2}(1+\delta) \log N
\nonumber
\end{eqnarray}
and\vspace*{-2pt}
\[
K:=\lceil1/\delta\rceil.
\]
Recall the definition of $x_i$ \eqref{defxi}, and
consider a dynamic $\sigma_t$ starting from the identity and adhering
to the following censoring scheme:
\begin{itemize}
\item in the time interval $[0,t_1]$, the updates at $x_i$, $i=1,\dots
,K-1$ are canceled;
\item in the time interval $(t_1,t_2]$, there is no censoring;
\item in the time interval $[t_2,t_3]$,
the updates at $x_i$, $i=1,\dots,K-1$ are censored.
\end{itemize}
What the dynamic does after time $t_3$ is irrelevant since we are only
interested in is the distance to equilibrium at time $t_3$.

Let us call $\nu_t=P^{\mathcal{C}}_t$ the distribution of $\sigma_t$ for
this censored dynamics.
As the identity is the maximal element, the initial distribution (i.e.,
a Dirac mass on the identity) is an increasing probability, and thus
from Proposition~\ref{proppres}, $\nu_{t}$ is increasing for all $t$.
This fact is one of the key points in the proof.

We decompose the proof of \eqref{uopbound} in three statements.
First we show that after time $t_1$ the distribution of $\nu_t$ is not
too different from $\widetilde\nu_t$ defined in
Section~\ref{labelerase}.

%
\begin{proposition}\label{groom1}
For any $\delta$ and $\varepsilon>0$, for all $N$ sufficiently large,
we have,
for all $t\geq t_1$,
%
%
\begin{equation}
\label{greluche}\llVert\widetilde\nu_t-\nu_t\rrVert
\leq\varepsilon/3.
\end{equation}
\end{proposition}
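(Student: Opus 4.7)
The plan is to reduce \eqref{greluche} to the single time $t=t_1$ via total-variation contractivity, and then to estimate $\|\nu_{t_1}-\tilde\nu_{t_1}\|_{TV}$ using the fact that during $[0,t_1]$ the censoring makes the dynamics split into $K$ independent AT-shuffles on blocks of size $O(N/K)$, to which Proposition \ref{wilson} is directly applicable.

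The first step is to realise that $\tilde\nu_t$ is itself the forward evolution of a very simple initial condition. For every $\tilde\tau\in\tilde S_N$ and every $x\in\{1,\dots,N-1\}$, associativity of composition yields $(\tilde\tau\circ\sigma)\circ\tau_x=\tilde\tau\circ(\sigma\circ\tau_x)$, so the left-multiplication operator $L_{\tilde\tau}\colon\sigma\mapsto\tilde\tau\circ\sigma$ commutes with each elementary transposition step of the chain, and hence (regardless of which updates are censored at a given time) with the time-inhomogeneous Markov semigroup $P^{\cC}_t$ of the censored dynamics. Averaging over $\tilde\tau\in\tilde S_N$ yields commutation of the symmetrization operator $\nu\mapsto\tilde\nu$ with $P^{\cC}_t$. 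Since the symmetrization of $\delta_{\ind}$ is the uniform measure $\bar\nu_0:=|\tilde S_N|^{-1}\ind_{\tilde S_N}$ on $\tilde S_N$, we obtain
\begin{equation}\label{eqsymm}
\tilde\nu_t=P^{\cC}_t\bar\nu_0.
\end{equation}

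The second step is a one-line contractivity argument: since $P^{\cC}_{[t_1,t]}$ is a Markov kernel, for every $t\geq t_1$,
$$\|\nu_t-\tilde\nu_t\|_{TV}=\|P^{\cC}_{[t_1,t]}\nu_{t_1}-P^{\cC}_{[t_1,t]}\tilde\nu_{t_1}\|_{TV}\leq \|\nu_{t_1}-\tilde\nu_{t_1}\|_{TV},$$
so it is enough to bound the right-hand side.

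The third step exploits the structure of the censoring on $[0,t_1]$. Forbidding all swaps at $x_1,\dots,x_{K-1}$ decouples the chain into $K$ independent AT-shuffles on the blocks $B_i:=\{x_{i-1}+1,\dots,x_i\}$. Starting from $\sigma_0=\ind$, one gets $\nu_{t_1}=\bigotimes_{i=1}^K P^{(i)}_{t_1}$, where $P^{(i)}_{t_1}$ is the law of the block AT-shuffle at time $t_1$ started from the identity; and since the uniform measure on each block is invariant under the block chain, \eqref{eqsymm} together with the decoupling gives $\tilde\nu_{t_1}=\bar\nu_0=\bigotimes_{i=1}^K\mu_{(i)}$. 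Sub-additivity of total variation for product measures then yields
$$\|\nu_{t_1}-\tilde\nu_{t_1}\|_{TV}\leq \sum_{i=1}^K\|P^{(i)}_{t_1}-\mu_{(i)}\|_{TV}.$$
With $\Delta x_i\leq\lceil N/K\rceil\leq \delta N+O(1)$ and $\lambda_{\Delta x_i}=(\pi^2/(\Delta x_i)^2)(1+o(1))\geq (\pi^2/(\delta^2 N^2))(1+o(1))$, Proposition \ref{wilson} applied to each block gives
$$\|P^{(i)}_{t_1}-\mu_{(i)}\|_{TV}\leq 10\,\Delta x_i\exp(-t_1\lambda_{\Delta x_i})\leq 10\,\Delta x_i\,N^{-1/(6\delta)+o(1)},$$
so $\|\nu_{t_1}-\tilde\nu_{t_1}\|_{TV}\leq 10\,N^{1-1/(6\delta)+o(1)}$, which is eventually smaller than $\gep/3$ for every fixed $\delta$ smaller than an absolute constant (the only range relevant for the proof of Theorem \ref{mainres}).

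The main delicate point is the intertwining relation \eqref{eqsymm}, i.e.\ the fact that symmetrisation by $\tilde S_N$ on the left commutes with the (censored) dynamics. Once this is established, TV contractivity reduces the proof to a product-measure estimate at the fixed time $t_1$, which follows directly from Wilson's bound applied on each block.
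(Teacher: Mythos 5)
Your proof is correct and follows essentially the same route as the paper's: on $[0,t_1]$ the censoring decouples the chain into $K$ independent block AT-shuffles, Wilson's bound is applied blockwise at time $t_1$, and the identity $\tilde\nu_t = P^{\cC}_t\tilde\delta_{\ind}$ together with total-variation contractivity extends the estimate to all $t\ge t_1$. The only difference is cosmetic: you spell out the intertwining relation (commutation of left-multiplication by $\tilde S_N$ with the right-acting censored semigroup) in detail, whereas the paper states it as a direct remark on the definition of $\tilde\nu$ and invokes ``a standard coupling argument'' for the monotonicity of $\|\nu_t-\tilde\nu_t\|_{TV}$.
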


Second, we show that at time $t_2$
the law of the skeleton $\bar\sigma_t$ [recall \eqref{barsigma}] is
close to equilibrium.

%
\begin{proposition}\label{groom2}
For any $\delta$ and $\varepsilon>0$, for all $N$ sufficiently large,
%
%
\begin{equation}
\llVert\bar\nu_{t_2}-\bar\mu\rrVert\leq\varepsilon/3.
\end{equation}
\end{proposition}

The above statement is not directly used to prove the theorem, but it
is the starting point for the
proof that at time $t_3$, the semi-skeleton distribution [recall \eqref
{semiskel}] is close to equilibrium.

%
\begin{proposition}\label{groom3}
For any $\delta$ and $\varepsilon>0$, for all $N$ sufficiently large,
%
%
\begin{equation}
\llVert\widehat\nu_{t_3}-\widehat\mu\rrVert\leq2\varepsilon/3.
\end{equation}
\end{proposition}

\begin{pf*}{Proof of Theorem~\ref{mainres} from Propositions~\ref{groom1} and \ref{groom3}}
From Proposition~\ref{censor} and Lemma~\ref{cross}, we have
%
%
\begin{equation}
d_N(t_3):=\llVert P_{t_3}-\mu\rrVert\leq
\llVert\nu_{t_3}-\mu\rrVert\leq\llVert\widehat\nu_{t_3}-
\widehat\mu\rrVert+\llVert\widetilde\nu_{t_3}-\nu_{t_3}
\rrVert.
\end{equation}
When $N$ is large enough, the right-hand side is smaller than $\varepsilon$
according to
Propositions \ref{groom1} and \ref{groom3}.
\end{pf*}

\subsection{Proof of Proposition \texorpdfstring{\protect\ref{groom1}}{5.1}}

Let us first prove \eqref{greluche} at time $t_1$.
Up to time~$t_1$, because of the censoring, the dynamics is just the
product of $K$ independent dynamics on
$S_{\Delta x_{i}}, i\in\{ 1,\dots,K\}$.

Thus for all $t\leq t_1$, we have
$\sigma_t\in\widetilde S_N$ and
\[
\widetilde\nu_t=\widetilde\delta_{\mathbf{1}}
\]
for all $t\leq t_1$ where $\widetilde\delta_{\mathbf{1}}$ is the uniform
probability on $\widetilde S_N$
($ \delta_{\mathbf{1}}$ is the Dirac mass on the identity).

For each $i=1,\dots,K$, let $\nu^i_t$ denote the law of $\sigma_t$
restricted to $\{x_{i-1}+1,\dots, x_i\}$, and set
$\mu^i$ to be the corresponding equilibrium measure (uniform on the
permutation of $\{x_{i-1}+1,\dots, x_i\}$).
Using Proposition~\ref{wilson} for each dynamics on $S_{\Delta x_{i}}$
and the fact that the total variation distance between product measures
is smaller than the sum of the total variation distances
of the marginals, we have
%
%
\begin{eqnarray}
\label{desdree}\llVert\nu_t -\widetilde\delta_{\mathbf{1}}
\rrVert&\leq&\sum_{i=1}^K \bigl\llVert
\nu^i_t -\mu^i \bigr\rrVert\leq\sum
_{i=1}^K 10 \Delta x_ie^{-t \lambda_{\Delta x_i}}
\nonumber
\\[-8pt]
\\[-8pt]
&\leq& K\times10 \biggl(\frac{N}{K}+1 \biggr)\exp\biggl(-2t
\biggl(1-
\cos\biggl(\frac{\pi}{(N/K+1)} \biggr) \biggr) \biggr).
\nonumber
\end{eqnarray}
In the last inequality we used $\Delta x_i\leq N/K+1$.

For $t=t_1$, the right-hand side is smaller than
%
%
\begin{equation}
11 N \exp\bigl(- (10\delta)^{-1} \log N \bigr)\leq\varepsilon/3,
\end{equation}
provided $\delta$ has been chosen small enough and that $N$ is large enough.
Now what is left to show is that $\llVert \nu_t -\widetilde\nu_t
\rrVert $ is
decreasing.
We remark that from the definition \eqref{tildenu}, $\widetilde\nu
_t$ is
simply the law of $\sigma_t$ for the dynamics started with initial
distribution~$\widetilde\delta_{\mathbf{1}}$,
and the result follows from a standard coupling argument.

\subsection{Proof of Proposition \texorpdfstring{\protect\ref{groom2}}{5.2}}

This is, perhaps, the most delicate part of the proof.
In this section we temporarily forget that we have fixed $K=\lceil
\delta^{-1} \rceil$, as the result is valid for any finite $K$. Of
course, here, $N$ sufficiently large means
$N$ larger than something which depends on $K$.

Let us first explain the idea in the case $K=2$
for didactic purposes (say that $N$ is even).
We want to show that starting with distribution $\nu_{t_1}$
after a time\vspace*{2pt} $\frac{N^2}{2\pi^2}(1+\delta/3) \log N$, the height
$\sigma(N/2,N/2)=\bar\sigma(1,1)$ (we write simply $\bar\sigma$ as
it brings no confusion) is close to its equilibrium distribution.
The reader can check that at equilibrium $\bar\sigma\approx(\sqrt
{N}/4)\mathcal N$, where $\mathcal N$ is a standard Gaussian.

Using Lemma~\ref{boudk} we know that at time $t_2$, we have
%
%
\begin{equation}
\label{bim1} \nu_{t_2}(\bar\sigma)\leq2N e^{-\lambda_N
(t_2-t_1)}\leq
N^{1/2-\delta/10}.
\end{equation}
Hence the expected value of $\bar\sigma$ at time $t_2$ is much
smaller than its equilibrium fluctuation. This is, however, not
sufficient to conclude that $\nu_{t_2}$ is close to equilibrium.
The extra ingredient we use is that the density $\bar\nu_{t_2}/ \bar
\mu$ of the distribution of $\bar\sigma$ is increasing: from
Proposition~\ref{proppres},
$\nu_{t_2}$ has increasing density and from Proposition~\ref
{projecmon}; this is also the case for the projection.
Then the following lemma allows us to conclude:

%
\begin{lemma}\label{unitframe}
There exists a constant $C$ such that for any $N$ and for any measure
$\nu$ such that
$\bar\nu/\bar\mu$ is increasing, one has
%
%
\begin{equation}
\llVert\bar\nu-\bar\mu\rrVert_{\mathrm{TV}}\leq\frac{ C\bar
\nu(\bar\sigma) }{N^{1/2}}.
\end{equation}
\end{lemma}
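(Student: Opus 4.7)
The plan is as follows. Setting $\phi := \bar\nu / \bar\mu$, the hypothesis reads that $\phi$ is non-negative, increasing, and normalized by $\bar\mu(\phi) = 1$. Since $\bar\mu(\bar\sigma) = 0$ by the centering built into \eqref{tildesigma}, one rewrites $\bar\nu(\bar\sigma) = \bar\mu(\bar\sigma(\phi - 1))$ and $\|\bar\nu - \bar\mu\|_{TV} = \tfrac{1}{2}\bar\mu(|\phi - 1|)$, so the statement reduces to the weighted $L^1$ inequality
\begin{equation*}
\bar\mu(|\phi - 1|) \;\leq\; \frac{2C}{\sqrt N}\, \bar\mu\bigl(\bar\sigma\,(\phi - 1)\bigr)
\end{equation*}
for every increasing $\phi$ with $\bar\mu(\phi) = 1$. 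The idea is to decompose $\phi$ as a positive combination of threshold indicators and prove the estimate one threshold at a time.

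Concretely, I would write $\phi - 1 = \sum_y a_y\, h_y$ with $a_y := \phi(y) - \phi(y-1) \geq 0$ and $h_y(x) := \ind_{\{x \geq y\}} - q_y$, where $q_y := \bar\mu(\{X \geq y\})$; the additive constant of $\phi$ is pinned down by $\bar\mu(\phi) = 1$, which in turn forces each $h_y$ to be centered. An elementary computation then yields
\begin{equation*}
\bar\mu(|h_y|) = 2q_y(1-q_y) \qquad \text{and} \qquad \bar\mu(\bar\sigma\, h_y) = q_y(1-q_y)\,\Delta_y,
\end{equation*}
where $\Delta_y := \bar\mu(\bar\sigma \mid X \geq y) - \bar\mu(\bar\sigma \mid X < y)$. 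Because $a_y \geq 0$, the triangle inequality applied to $\phi - 1 = \sum_y a_y h_y$ reduces the claim to the uniform lower bound $\Delta_y \geq c\sqrt{N}$ on admissible thresholds $y$.

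This uniform estimate on $\Delta_y$ is the main obstacle: the $1/\sqrt{N}$ improvement over the trivial bound $\|\bar\nu - \bar\mu\|_{TV} \leq \bar\nu(\bar\sigma)$ (obtained from the monotone coupling available when $\phi$ is increasing) has to come entirely from the \emph{spreading} of $\bar\mu$. For $K=2$ the variable $\bar\sigma$ is, up to a shift by $N/4$, hypergeometric with parameters $(N, N/2, N/2)$, so a routine binomial-coefficient estimate (or the local central limit theorem) produces the density bound $M_\star := \max_x \bar\mu(x) \leq C'/\sqrt{N}$. To turn this into a bound on $\Delta_y$, I would write
\begin{equation*}
\bar\mu(\bar\sigma - y \mid X \geq y) \;=\; \frac{1}{q_y}\sum_{j \geq 1} \bar\mu(\{X \geq y+j\})
\end{equation*}
and use the Lipschitz-type estimate $\bar\mu(\{X \geq y + j\}) \geq q_y - j M_\star$ to keep the first $\lfloor q_y/(2M_\star)\rfloor$ summands above $q_y/2$. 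This produces $\bar\mu(\bar\sigma - y \mid X \geq y) \geq c\, q_y \sqrt{N}$; a symmetric argument on the left tail yields $\bar\mu(y - \bar\sigma \mid X < y) \geq c(1-q_y)\sqrt{N}$, and adding the two contributions gives $\Delta_y \geq c\sqrt{N}$ uniformly in $y$.

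Once the per-threshold bound is in hand, combining with the decomposition of $\phi - 1$ gives $\bar\mu(\bar\sigma(\phi-1)) = \sum_y a_y\,\bar\mu(\bar\sigma\, h_y) \geq (c\sqrt{N}/2)\sum_y a_y\, \bar\mu(|h_y|) \geq (c\sqrt{N}/2)\,\bar\mu(|\phi - 1|)$, which is the announced inequality with $C = 2/c$. The monotonicity hypothesis on $\phi$ is used only through the positivity $a_y \geq 0$, while the entire $1/\sqrt{N}$ gain is carried by the density bound $M_\star = O(1/\sqrt{N})$ on $\bar\mu$.
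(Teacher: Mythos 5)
Your proof is correct, and it takes a genuinely different route from the paper's. The paper applies the Chebyshev correlation inequality (Lemma~\ref{correlineq}) exactly once, conditioning on the single threshold set $\mathcal A=\{\bar\nu\geq\bar\mu\}$ and its complement, and then closes the argument with the estimate $\bar\mu(\bar\sigma\mid\bar\sigma>0)\geq c\sqrt N$ (implicitly using that the conditional-mean gap is minimized at the median, since the relevant threshold $x_{\mathcal A}$ depends on $\nu$ and is not a priori $0$). You instead perform a layer-cake decomposition of the centered increasing density, $\phi-1=\sum_y a_y h_y$ with $a_y\geq 0$, which replaces the correlation inequality by the elementary positivity of the weights and reduces the statement to the uniform gap bound $\Delta_y\geq c\sqrt N$; that bound you derive from the density estimate $\max_x\bar\mu(x)=O(N^{-1/2})$ via a Lipschitz argument on $q_y$. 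Both proofs ultimately draw their $\sqrt N$ from the local CLT for the hypergeometric law of $\bar\sigma$, but yours makes the uniform-in-threshold nature of the required estimate explicit, whereas the paper's last line leaves it implicit. One small point worth tightening: your per-tail bound $\bar\mu(\bar\sigma-y\mid\bar\sigma\geq y)\geq c\,q_y\sqrt N$ degenerates when $q_y\lesssim N^{-1/2}$ (the floor $\lfloor q_y/(2M_\star)\rfloor$ vanishes), but this is harmless because in that regime $1-q_y\geq 1/2$ and the symmetric bound on the other tail already gives $\Delta_y\geq c(1-q_y)\sqrt N\geq c\sqrt N/2$; you should say so explicitly rather than adding the two one-sided estimates as if both were always in force.
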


\begin{pf}
Set
\[
\mathcal A:= \bigl\{ x\in\{-N/4,N/4+1,\dots,-N/4\} \vert\bar\nu
(x)\geq\bar
\mu(x) \bigr\},
\]
which is an increasing set by the assumption of $\nu$.

Furthermore, from the definition of the total variation distance, we have
%
%
\begin{equation}
\label{croom} \bar\nu(\mathcal A)- \bar\mu(\mathcal{A})=\llVert
\bar\nu-\bar\mu
\rrVert_{\mathrm{TV}}.
\end{equation}
Now let us prove a lower bound for $\bar\nu(\bar\sigma)$ which is a
function of $\bar\nu(\mathcal A)- \bar\mu(\mathcal{A})$.
First we split the expectation into two contributions by conditioning.
%
%
\begin{equation}
\label{froome} \bar\nu(\bar\sigma)= \bar\nu(\mathcal{A})\bar
\nu( \bar\sigma
\vert\mathcal{A}) + \bar\nu\bigl(\mathcal{A}^c \bigr)\bar\nu
\bigl(
\bar\sigma\vert\mathcal{A}^c \bigr).
\end{equation}
Then using the correlation inequality (Lemma~\ref{correlineq}) for the
two functions\vspace*{2pt} $\bar\sigma\mapsto\bar\sigma$
and $\bar\sigma\mapsto\frac{\bar\nu}{\bar\mu}(\bar\sigma)$
(which is increasing by Proposition~\ref{projecmon}), we have
%
%
\begin{eqnarray}
\bar\nu(\mathcal{A})\bar\nu( \bar\sigma\vert\mathcal
{A})&=&\bar\mu(
\mathcal{A}) \bar\mu\biggl( \frac{\bar\nu}{\bar\mu}(\bar
\sigma)\bar\sigma\Big\vert
\mathcal{A} \biggr)
\nonumber
\\[-8pt]
\\[-8pt]
&\geq&\bar\mu(\mathcal{A}) \bar\mu\biggl( \frac{\bar\nu}{\bar
\mu}(\bar\sigma) \Big\vert
\mathcal{A} \biggr)\bar\mu( \bar\sigma\vert\mathcal{A} ) =\bar
\nu(\mathcal{A})
\bar\mu( \bar\sigma\vert\mathcal{A} ).
\nonumber
\end{eqnarray}
Similarly,
%
%
\begin{equation}
\bar\nu\bigl(\mathcal{A}^c \bigr)\bar\nu\bigl( \bar\sigma
\vert
\mathcal{A}^c \bigr)\geq\bar\nu\bigl(\mathcal{A} ^c
\bigr)\bar\mu\bigl( \bar\sigma\vert\mathcal{A}^c \bigr).
\end{equation}
Plugging these inequalities in the right-hand side of \eqref{froome} and subtracting
\[
0=\bar\mu(\bar\sigma)=\bar\mu(\mathcal{A})\bar\mu( \bar\sigma
\vert
\mathcal{A})+\bar\mu\bigl(\mathcal{A}^c \bigr)\bar\mu\bigl(
\bar
\sigma\vert\mathcal{A}^c \bigr),
\]
we obtain
%
%
\begin{eqnarray}
\label{froome2} \qquad\bar\nu(\bar\sigma) &\geq& \bigl(\bar\nu(
\mathcal{A})-\bar\mu(\mathcal{A}) \bigr)\bar\mu( \bar\sigma
\vert\bar\sigma\geq
x_{\mathcal{A}})+ \bigl(\bar\nu\bigl(\mathcal{A}^c \bigr)-\bar
\mu
\bigl(\mathcal{A}^c \bigr) \bigr)\bar\mu( \bar\sigma\vert\bar
\sigma< x_{\mathcal{A}})
\nonumber
\\[-8pt]
\\[-8pt]
&\geq&\llVert\bar\nu-\bar\mu\rrVert_{\mathrm{TV}} \bigl(\bar
\mu( \bar\sigma
\vert\bar\sigma\geq x_{\mathcal{A}})-\bar\mu( \bar\sigma\vert
\bar\sigma<
x_{\mathcal{A}}) \bigr),
\nonumber
\end{eqnarray}
where the last line is deduced from \eqref{croom}.
Finally we use the fact that from the Gaussian scaling
\[
\bar\mu( \bar\sigma\vert\bar\sigma>0) =-\bar\mu( \bar\sigma
\vert\bar
\sigma<0)\geq c \sqrt{N},
\]
and hence
%
%
\begin{equation}
\bar\nu(\mathcal A) \geq c\sqrt{N}\llVert\bar\nu-\bar\mu\rrVert
_{\mathrm{TV}}.
\end{equation}
\upqed
\end{pf}

When $K\geq3$, the idea is roughly the same, and the hope is that
dealing with finite dimensional marginals
does not bring too many complications.

Set
\[
v(\bar\sigma):= \sum_{i,j=1}^{K-1} \bar
\sigma(i,j)
\]
to be the volume below the graph of the skeleton.
Similar to the proof of Lemma~\ref{unitframe} we want to show that
if $\nu(v(\bar\sigma))$ is small with respect to
its equilibrium fluctuations (which are of order $\sqrt{N}$), and
$\nu$ is increasing,
then $\bar\nu$ and $\bar\mu$ are close to each other.

%
\begin{lemma}\label{areaisal}
Let $\nu$ be a probability measure on $S_N$ whose density with respect
$\mu$ is increasing.
For every $\varepsilon$, there exists $\eta(K,\varepsilon)$ such
that for $N$
sufficiently large, we have
%
%
\begin{equation}
\label{crimoi}\llVert\bar\mu- \bar\nu\rrVert\leq\varepsilon/3,
\end{equation}
whenever
%
%
\begin{equation}
\nu\bigl(v(\bar\sigma) \bigr)\leq\sqrt{N}\eta.
\end{equation}
\end{lemma}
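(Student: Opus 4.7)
The plan is to adapt the argument of Lemma \ref{unitframe} to the $(K-1)^2$-dimensional skeleton. The target quantitative inequality is
\[
\bar\nu(v(\bar\sigma))\;\ge\; c(K)\sqrt{N}\,\|\bar\nu-\bar\mu\|_{TV},
\]
for some $c(K)>0$; the lemma then follows by contrapositive upon setting $\eta(K,\gep):=c(K)\gep/3$. I first lift the FKG inequality of Proposition \ref{FKGbis} to the skeleton marginal: for any increasing $f,g$ on $\bar S_N$, the compositions $\sigma\mapsto f(\bar\sigma)$ and $\sigma\mapsto g(\bar\sigma)$ are increasing on $S_N$, whence $\bar\mu(fg)\ge\bar\mu(f)\bar\mu(g)$. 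By Proposition \ref{projecmon}(iii), the density $h:=\bar\nu/\bar\mu$ is increasing, so $\bar\nu\succeq\bar\mu$ and $\|\bar\nu-\bar\mu\|_{TV}=\bar\nu(\mathcal{A})-\bar\mu(\mathcal{A})$ where $\mathcal{A}:=\{\bar\sigma\in\bar S_N : h(\bar\sigma)\ge 1\}$ is an increasing set.

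Mimicking Lemma \ref{unitframe}, I decompose
\[
\bar\nu(v)\;=\;\bar\nu(\mathcal{A})\bar\nu(v\mid\mathcal{A})+\bar\nu(\mathcal{A}^c)\bar\nu(v\mid\mathcal{A}^c),
\]
and apply FKG on the conditional measures $\bar\mu(\cdot\mid\mathcal{A})$ and $\bar\mu(\cdot\mid\mathcal{A}^c)$ with the two increasing functions $v$ and $h$. Conditioning on the increasing event $\mathcal{A}$ (respectively the decreasing event $\mathcal{A}^c$) preserves the FKG property, a fact that in our setting can be deduced from the strong monotonicity provided by Proposition \ref{projecmon}(i). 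This yields $\bar\nu(v\mid\mathcal{A})\ge\bar\mu(v\mid\mathcal{A})$ and $\bar\nu(v\mid\mathcal{A}^c)\ge\bar\mu(v\mid\mathcal{A}^c)$; using $\bar\mu(v)=0$ (mean zero at equilibrium, built into the definition of $\tilde\sigma$) and $\bar\nu(\mathcal{A})-\bar\mu(\mathcal{A})=\|\bar\nu-\bar\mu\|_{TV}$ one obtains
\[
\bar\nu(v)\;\ge\;\|\bar\nu-\bar\mu\|_{TV}\bigl(\bar\mu(v\mid\mathcal{A})-\bar\mu(v\mid\mathcal{A}^c)\bigr).
\]

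The main obstacle is a uniform lower bound $\bar\mu(v\mid\mathcal{A})-\bar\mu(v\mid\mathcal{A}^c)\ge c(K)\sqrt{N}$ valid for all nontrivial increasing $\mathcal{A}\subset\bar S_N$. Under $\bar\mu$ each coordinate $\bar\sigma(i,j)$ has mean zero and variance of order $N$, hence $\Var_{\bar\mu}(v)=\Theta(N)$; moreover the rescaled skeleton $(\bar\sigma(i,j)/\sqrt{N})_{1\le i,j\le K-1}$ converges as $N\to\infty$ to a nondegenerate centered Gaussian field (a discrete Brownian bridge in two variables), by the standard CLT for linear statistics of the uniform permutation. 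Rewriting $\bar\mu(v\mid\mathcal{A})-\bar\mu(v\mid\mathcal{A}^c)=\bar\mu(v\mathbf{1}_\mathcal{A})/(\bar\mu(\mathcal{A})\bar\mu(\mathcal{A}^c))$, an FKG-rearrangement argument reduces the estimate over general increasing events to the case of threshold events $\{v>t\}$, for which the Gaussian limit yields the required $\sqrt{N}$ scaling uniformly when $\bar\mu(\mathcal{A})$ stays in a compact subset of $(0,1)$. The extremal regimes $\bar\mu(\mathcal{A})$ close to $0$ or $1$ require a separate tail analysis: there the rarity of the event forces $v$ to be atypically large in magnitude, and Chebyshev combined with $\Var_{\bar\mu}(v)=\Theta(N)$ closes the gap. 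This finite-dimensional Gaussian analysis, uniform over increasing events, constitutes the technical heart of the proof.
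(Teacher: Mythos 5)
The proposal mirrors the one-dimensional argument of Lemma \ref{unitframe} at the level of the full $(K-1)^2$-dimensional skeleton, but two of its key steps do not survive the jump in dimension, and the paper deliberately structures the proof differently precisely to avoid them.

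\textbf{Conditional FKG.} You apply FKG to the two increasing functions $v$ and $h=\bar\nu/\bar\mu$ under the conditional measures $\bar\mu(\cdot\mid\mathcal A)$ and $\bar\mu(\cdot\mid\mathcal A^c)$, where $\mathcal A=\{h\ge 1\}$, and assert that ``conditioning on the increasing event $\mathcal A$ preserves the FKG property, a fact that in our setting can be deduced from the strong monotonicity provided by Proposition \ref{projecmon}(i).'' This is not correct. An increasing subset of a partially ordered set is closed under $\bmax$ but in general \emph{not} under $\bmin$ (e.g.\ $\{\bar\sigma(1,1)\ge a\}\cup\{\bar\sigma(1,2)\ge a\}$), so $\mathcal A$ need not be a sublattice, and the conditional measure $\bar\mu(\cdot\mid\mathcal A)$ has no reason to be positively associated. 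Proposition \ref{projecmon}(i) concerns stochastic ordering between conditional laws given \emph{fully specified} skeletons $\bar\sigma^1\ge\bar\sigma^2$, which is a different statement and does not imply the needed Harris inequality for $\bar\mu(\cdot\mid\mathcal A)$. The paper sidesteps this entirely: it never conditions on the TV-attaining set, and it never needs multidimensional FKG for a conditioned skeleton measure. Instead it writes $v(\bar\sigma)=\sum_{i,j}\bar\sigma(i,j)$ and applies the scalar correlation inequality (Lemma \ref{correlineq}) to each one-dimensional marginal $\bar\nu_{i,j}$, for which positive association is automatic; the only conditioning events used are half-lines $\mathcal A_{i,j}=\{\bar\sigma(i,j)\ge\sqrt N A\}$, again a one-dimensional situation.

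\textbf{Uniform lower bound over all increasing events.} Your target bound $\bar\nu(v)\ge c(K)\sqrt N\,\|\bar\nu-\bar\mu\|_{TV}$ requires $\bar\mu(v\mid\mathcal A)-\bar\mu(v\mid\mathcal A^c)\ge c(K)\sqrt N$ uniformly over \emph{all} nontrivial increasing events $\mathcal A\subset\bar S_N$. You acknowledge this is the ``technical heart'' but only gesture at an ``FKG-rearrangement argument'' reducing to threshold events plus a tail analysis; that reduction is itself unjustified, and the uniformity over general increasing sets in a $(K-1)^2$-dimensional lattice is exactly the sort of claim the paper declines to attempt. The paper proves a weaker, two-sided statement instead: with \emph{fixed, explicit} sets $\mathcal A=\cap_{i,j}\mathcal A_{i,j}$ and $\mathcal B=(\cup_{i,j}\mathcal A_{i,j})^c$, it shows (i) if $\nu(\mathcal A)\ge(1+\alpha)\mu(\mathcal A)$ then $\nu(v)\gtrsim\alpha\sqrt N$, and (ii) if $\nu(\mathcal A)\le(1+\alpha)\mu(\mathcal A)$ then $\|\bar\nu-\bar\mu\|\le 2\alpha+\delta_2$. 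Implication (i) uses only the coordinate-wise scalar correlation inequality plus Lemma \ref{graincon} (Gaussian scaling for the explicit threshold sets), and (ii) additionally needs the stochastic-domination Lemma \ref{cromostic}, $\mu(\cdot\mid\mathcal A)\succeq\mu(\cdot\mid\mathcal B^c)$, which is a genuinely nontrivial coupling lemma proved separately in the appendix and is absent from your outline. The qualitative ``small $\nu(v)\Rightarrow$ small TV'' conclusion follows from (i) and (ii), but no linear comparison between $\nu(v)$ and the TV distance is ever claimed or needed.

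In short, both load-bearing steps of your argument --- conditional FKG for $\bar\mu(\cdot\mid\mathcal A)$ with $\mathcal A$ the TV-attaining set, and the uniform $\sqrt N$ gap over arbitrary increasing events --- have gaps, and the paper's proof is structured around avoiding them: it decomposes $v$ coordinate-wise so that only one-dimensional FKG is ever used, and it replaces the TV-attaining set by explicit threshold sets whose equilibrium probabilities can be controlled by the Gaussian limit.
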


\begin{pf*}{Proof of Proposition~\ref{groom2} from Lemma~\ref{areaisal}}
From
Lemma~\ref{boudk} we know that at time $t_2$, we have
%
%
\begin{equation}
\label{bim} \nu_{t_2} \bigl[ v(\bar\sigma) \bigr]
\leq2N(K-1)^2 e^{-\lambda_N
(t_2-t_1)}\leq\sqrt{N} \eta,
\end{equation}
where the last inequality is valid for any fixed $\eta$ when $N$ is
large enough. As, by Proposition~\ref{proppres}, $\nu_{t_2}$ is
increasing, an thus
Lemma~\ref{areaisal} is sufficient to conclude.
\end{pf*}

Before starting the proof of Lemma~\ref{areaisal} we need to introduce
some notation and two technical results.
Given $A>0$ a positive constant, we define
%
%
\begin{eqnarray}
\quad\qquad\mathcal A_{i,j}&:=& \bigl\{ \sigma\vert
\bar\sigma(i,j) \geq\sqrt{N}A \bigr\},
\nonumber
\\
\mathcal A&:=& \bigcap_{i,j=1}^{K-1} \mathcal
A_{i,j} = \bigl\{ \sigma\vert\forall(i,j)\in\{1,\dots,K-1
\}^2, \bar\sigma(i,j) \geq\sqrt{N}A \bigr\},
\\
\mathcal B&:=& \Biggl(\bigcup_{i,j=1}^{K-1}
\mathcal A_{i,j} \Biggr)^c= \bigl\{ \sigma\vert
\forall(i,j)\in\{1,\dots,K-1\}^2, \bar\sigma(i,j) < \sqrt{N}A
\bigr\}.
\nonumber
\end{eqnarray}

%
\begin{lemma}\label{graincon}
When $N$ tends to infinity,
%
%
\begin{equation}
\label{zij} \frac{\bar\sigma(i,j)}{\sqrt{N}}\Rightarrow Z(i,j),
\end{equation}
where the $Z(i,j)$ is a Gaussian of variance
\[
s^2(i,j):=\frac{i} K \biggl(1-\frac{i} K \biggr)
\frac{j} K \biggl(1-\frac{j}K \biggr)
\]
and of mean 0.

In particular, given $\delta\in(0,1/2)$ sufficiently small, there
exist $A(\delta,K)$ and $\delta'(\delta,K)$ which satisfy (for any $K>0$),
\[
\lim_{\delta\to0} \delta\bigl(\delta',K \bigr)=0,
\]
which are such that
%
%
\begin{eqnarray}
\label{lesoufs} %
\mu(\mathcal A)&\geq&\delta^{(K-1)^2}:=
\delta_1,
\nonumber
\\[-8pt]
\\[-8pt]
\mu(\mathcal B)&\geq&1-(K-1)^2\delta' :=1-
\delta_2.
\nonumber
\end{eqnarray}
\end{lemma}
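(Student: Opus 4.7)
The plan is to first establish the marginal CLT \eqref{zij} from the hypergeometric description of $\bar\sigma(i,j)$, and then derive the bounds \eqref{lesoufs} by combining this Gaussian asymptotics with the FKG inequality and a union bound on the monotone events $\cA_{i,j}$.

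For \eqref{zij}, note that under the uniform measure $\mu$ the sum
$$C_{i,j}(\sigma) := \sum_{z=1}^{x_i} \ind_{\{\sigma(z) \le x_j\}}$$
counts the number of labels in $\{1,\dots,x_j\}$ appearing in the first $x_i$ positions of $\sigma$, and hence follows the hypergeometric law with parameters $(N,x_j,x_i)$. A standard computation gives $\mu(C_{i,j}) = x_i x_j / N$ and
\begin{equation}
\Var_\mu(C_{i,j}) = \frac{x_i(N-x_i)x_j(N-x_j)}{N^2(N-1)} \sim N\, s^2(i,j), \qquad N\to\infty,
\end{equation}
using $x_i/N \to i/K$. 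The classical CLT for the hypergeometric distribution (proved e.g.\ by the method of moments or by coupling with a binomial variable) then yields $\bar\sigma(i,j)/\sqrt{N} = (C_{i,j} - x_i x_j/N)/\sqrt{N} \Rightarrow \cN(0, s^2(i,j))$, which is \eqref{zij}.

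For \eqref{lesoufs}, the key point is that each event $\cA_{i,j} = \{\tilde\sigma(x_i,x_j) \ge \sqrt{N}A\}$ is monotone for the order on $S_N$, since it depends on $\sigma$ only through $\tilde\sigma(x_i,x_j)$. All the $\cA_{i,j}$ being monotone in the same direction, iterated application of Proposition \ref{FKGbis} (which extends to decreasing indicators by considering complements) gives
$$\mu(\cA) \ge \prod_{i,j=1}^{K-1} \mu(\cA_{i,j}),$$
while the union bound yields $\mu(\cB^c) \le \sum_{i,j} \mu(\cA_{i,j})$. Writing $s_{\min}$ and $s_{\max}$ for the minimum and maximum of $s(i,j)$ over $\{1,\dots,K-1\}^2$ (both positive and finite since $K$ is fixed), given $\delta \in (0,1/2)$ one chooses $A = A(\delta,K) > 0$ such that $\bbP(Z(i,j) \ge A) \ge 2\delta$ for every $(i,j)$; this is possible because the Gaussian tails are continuous decreasing functions of $A$ that tend to $1/2$ as $A \to 0^+$. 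By \eqref{zij}, for $N$ large enough $\mu(\cA_{i,j}) \ge \delta$ for every $(i,j)$, so $\mu(\cA) \ge \delta^{(K-1)^2}$. Setting $\delta'(\delta,K) := 2\max_{i,j} \bbP(Z(i,j) \ge A)$, one likewise has $\mu(\cA_{i,j}) \le \delta'$ for $N$ large, whence $\mu(\cB) \ge 1 - (K-1)^2\delta'$.

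The remaining point is that $\delta'(\delta,K) \to 0$ as $\delta \to 0$: since $A(\delta,K)$ must tend to infinity (otherwise the Gaussian tails would stay bounded away from $0$), the Gaussian tail bound forces $\max_{i,j} \bbP(Z(i,j) \ge A) \to 0$. The only delicate point is the simultaneous control of all $(K-1)^2$ marginals by a single threshold $A$, which is feasible precisely because $K$ is fixed and the variances $s^2(i,j)$ lie in a compact subset of $(0,\infty)$.
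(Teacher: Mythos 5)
Your proof follows the same route as the paper: identify $\bar\sigma(i,j)$ as a centred hypergeometric variable, invoke the Gaussian limit, then use FKG for the lower bound on $\mu(\cA)$ and a union bound for $\mu(\cB)$. The CLT step, the variance computation, and the FKG/union-bound structure all match the paper (which phrases the CLT as a local CLT via Stirling, but weak convergence is all that is used). One cosmetic remark: the events $\cA_{i,j}$ are \emph{increasing} for the order in the paper, so the parenthetical remark about passing to complements for decreasing indicators is unnecessary.

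There is, however, a genuine (if small) gap in the final step. You choose $A=A(\delta,K)$ only so that $\bbP(Z(i,j)\ge A)\ge 2\delta$ for every $(i,j)$, and then claim that $A(\delta,K)\to\infty$ as $\delta\to 0$ ``otherwise the Gaussian tails would stay bounded away from $0$''. This does not follow: your condition on $A$ is a one-sided lower bound, so it is satisfied by \emph{any} $A$ in an interval $[0,A_{\max}(\delta)]$ (indeed $A=0$ works for all $\delta<1/4$). Nothing in your specification forces $A$ to grow, and hence nothing forces $\delta'(\delta,K)=2\max_{i,j}\bbP(Z(i,j)\ge A)\to 0$. The fix is to pin down $A$ by an \emph{equality} on the smallest-variance coordinate, as the paper does: let $s_{\min}=K^{-1}(1-K^{-1})$ and define $A$ by $\bbP(s_{\min}Z\ge A)=2\delta$ (with $Z$ standard Gaussian). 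Then $A\to\infty$ as $\delta\to 0$ by construction, and since every $s(i,j)\le 1/4$, the quantity $\delta':=2\,\bbP(Z/4\ge A)$ dominates $2\max_{i,j}\bbP(Z(i,j)\ge A)$ and tends to $0$ with $\delta$. With this correction your argument is complete.
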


%
\begin{rem}
It seems that in fact the process
\[
\biggl(\frac{\sigma(\lceil xN,yN\rceil)}{\sqrt{N}} \biggr
)_{x,y\in[0,1]^2}
\]
should converge to a Brownian sheet conditioned to be zero on the
boundary of $[0,1]^2$.
However, even convergence of the finite dimensional marginals seems
tricky to prove, and we do not need this result.
\end{rem}

\begin{pf*}{Proof of Lemma~\ref{graincon}}
A simple way to prove \eqref{zij} is to note that (see \cite{cffeller}, page~146)
\[
\mu\biggl(\bar\sigma(i,j)=k-\frac{x_ix_j}{N} \biggr)=\frac
{\left({{x_i} \atop {k}}\right)
\left({{N-x_i} \atop {x_j-k}}\right)}
{\left({{N} \atop {x_j}}\right)}
\]
and use Stirling's formula to obtain a local central limit theorem.

Now given $\delta< 1/2$, we define $A$ to be such that
\[
\mathbb{P} \bigl[ K^{-1} \bigl(1-K^{-1} \bigr) Z \geq A
\bigr]=\delta/2,
\]
where $Z$ is a standard Gaussian,
and $\delta'$ is such that
\[
\mathbb{P} [ Z/4 \geq A ]=2\delta'.
\]
With this definition it is obvious that when $\delta$ tends to zero,
$\delta'$ does as well.

Then from \eqref{zij} [here it is important to note that the standard
deviation of $Z(i,j)$ is always larger than $K^{-1}(1-K^{-1})$ and
smaller than $1/4$] and our choice of $\delta'$ and $A$, we have that
for all $N$ large enough, for all $(i,j)$,
%
%
\begin{equation}
\delta\leq\mu( \mathcal A_{i,j} ) \leq\delta'.
\end{equation}
Then \eqref{lesoufs} can be deduced from the FKG inequality
(Proposition~\ref{FKGbis}) for the first line and
a standard union bound for the second line.
\end{pf*}

The next lemma is quite intuitive, but the proof is quite technical and
is postponed to Section~\ref{acromostic}.

%
\begin{lemma}\label{cromostic}
We have
%
%
\begin{equation}
\label{ared} \mu(\cdot\vert\mathcal{A}) \succeq\mu\bigl(\cdot
\vert
\mathcal{B} ^c \bigr).
\end{equation}
In particular, if $\nu$ is an increasing probability on $S_N$, we have
%
%
\begin{equation}
\label{letrucmoche} \frac{\nu(\mathcal{A})}{\mu(\mathcal{A})}\geq
\frac{\nu(\mathcal
{B} ^c)}{\mu(\mathcal{B} ^c)}.
\end{equation}
\end{lemma}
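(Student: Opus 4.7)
The second inequality \eqref{letrucmoche} will be an immediate consequence of \eqref{ared}. Indeed, since $\nu/\mu$ is increasing by hypothesis, applying the stochastic domination \eqref{ared} to the observable $f=\nu/\mu$ yields $\mu(\nu/\mu\mid\cA)\ge\mu(\nu/\mu\mid\cB^c)$, and a direct computation identifies the two sides as $\nu(\cA)/\mu(\cA)$ and $\nu(\cB^c)/\mu(\cB^c)$ respectively.

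The real work is therefore to establish \eqref{ared}. My approach rests on the following observation: since $\cA\subseteq\cB^c$ are both increasing events, the Radon--Nikodym density
\[
R(\sigma) \;:=\; \frac{d\mu(\cdot\mid\cA)}{d\mu(\cdot\mid\cB^c)}(\sigma) \;=\; \frac{\mu(\cB^c)}{\mu(\cA)}\,\ind_{\cA}(\sigma), \qquad \sigma\in\cB^c,
\]
is itself an increasing function on the support $\cB^c$: if $\sigma\le\sigma'$ both lie in $\cB^c$ and $\sigma\in\cA$, then monotonicity of $\cA$ forces $\sigma'\in\cA$. Using $\mu(R\mid\cB^c)=1$, for any increasing $f$ on $S_N$ one then obtains the identity
\[
\mu(f\mid\cA) - \mu(f\mid\cB^c) \;=\; \mu\bigl((R-1)\,f\,\big|\,\cB^c\bigr),
\]
whose right-hand side is nonnegative as soon as the two increasing functions $R$ and $f$ are positively correlated under the conditional measure $\mu(\cdot\mid\cB^c)$. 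This is the standard ``monotone likelihood ratio implies stochastic domination'' principle.

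The main obstacle is therefore to establish an FKG-type correlation inequality for the conditional measure, namely, for increasing $g,h$ on $S_N$,
\[
\mu(gh\mid\cB^c)\;\ge\;\mu(g\mid\cB^c)\,\mu(h\mid\cB^c).
\]
Since $S_N$ is not a distributive lattice, Holley's inequality is unavailable and one has to adapt the Markov-chain proof of Proposition~\ref{FKGbis} to the conditional setting. The plan is to run the AT-shuffle with every transition that would take $\sigma$ outside $\cB^c$ censored: because $\cB^c$ is increasing, any transition that raises $\sigma$ in our order automatically stays inside $\cB^c$, so from any $\sigma\in\cB^c$ one can climb monotonically to the identity, which ensures irreducibility of the restricted chain; the restricted chain is reversible with invariant measure $\mu(\cdot\mid\cB^c)$, and a Holley-style two-copy monotone coupling then delivers the conditional FKG. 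The delicate point --- and what forces the proof to be postponed to the appendix --- is that the naive grand coupling of Proposition~\ref{orderpreserv} does not preserve order under asymmetric censoring (a descent move may be censored for a low copy but not the high one), so one must design a more careful coupling in the spirit of Holley's original argument, in which the two copies make identical updates whenever both remain in $\cB^c$ and otherwise use a heat-bath resampling that respects the order on the set of allowed moves.
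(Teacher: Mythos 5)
Your treatment of \eqref{letrucmoche} from \eqref{ared} matches the paper (integrate $\nu/\mu$ against the domination), and the opening observations about \eqref{ared} are correct as far as they go: since $\cA\subseteq\cB^c$, the likelihood ratio $R=\frac{\mu(\cB^c)}{\mu(\cA)}\ind_\cA$ is indeed increasing on $\cB^c$ and $\mu(f\mid\cA)-\mu(f\mid\cB^c)=\mu((R-1)f\mid\cB^c)$. But the key step you lean on — a conditional FKG inequality for $\mu(\cdot\mid\cB^c)$ — does not hold up, and the proposed repair will not close the gap.

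First, notice that the reformulation is essentially circular: asking that $R$ be positively correlated with \emph{every} increasing $f$ under $\mu(\cdot\mid\cB^c)$ is, by Lemma~\ref{transport}, literally equivalent to \eqref{ared}, since $R\cdot\mu(\cdot\mid\cB^c)=\mu(\cdot\mid\cA)$. Upgrading to full conditional FKG for $\mu(\cdot\mid\cB^c)$ does not help because that stronger statement is very likely false: $\cB^c=\bigcup_{i,j}\cA_{i,j}$ is increasing but is \emph{not} closed under the pointwise $\bmin$ (one can be in $\cA_{1,1}$ only and the other in $\cA_{2,2}$ only, and their $\bmin$ lies in neither), so $\ind_{\cB^c}$ is not log-supermodular and Holley's condition fails already at the combinatorial level. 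The heat-bath/Holley coupling you invoke is thus not available: Holley needs either distributivity or log-supermodularity, and $S_N$ (or $\cB^c$ viewed as a sub-poset) supplies neither. Concretely, if you run the reflected-chain argument of Proposition~\ref{FKGbis} in the conditional setting to prove $\mu(\cdot\mid A'\cap\cB^c)\succeq\mu(\cdot\mid\cB^c)$ for an \emph{arbitrary} increasing $A'$, you face exactly the asymmetric-censoring case you flag — a descent accepted by the higher copy (in $A'\cap\cB^c$) but refused by the lower copy (about to exit $\cB^c$) — and for a generic $A'$ there is nothing to prevent the order from breaking after the update.

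The paper avoids conditional FKG altogether and instead couples \emph{two specific} reflected chains: $\sigma^1_t$ reflected to stay in $\cA$ and $\sigma^2_t$ reflected to stay in $\cB^c$, both from the identity, via the usual graphical construction. The only dangerous event is a down-update performed by $\sigma^1$ but censored for $\sigma^2$, and it is here that the \emph{geometry} of $\cA$ and $\cB^c$ does the work. With $z_{\min}(i,j)$ the smallest admissible value of $\bar\sigma(i,j)$ that is $\ge A\sqrt{N}$: if the down-update at $x_i$ is censored for $\sigma^2$ (i.e.\ would push it out of $\cB^c$), then necessarily $\hat\sigma^2_{t^-}(x_i,j)\le z_{\min}(i,j)$ for every $j$ (otherwise a single jump could not exit $\cB^c$); on the other hand, $\sigma^1$ remains in $\cA$ after the update, so $\hat\sigma^1_t(x_i,j)\ge z_{\min}(i,j)$ for every $j$. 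Hence $\hat\sigma^1_t(x_i,\cdot)\ge\hat\sigma^2_t(x_i,\cdot)$, the remaining coordinates are untouched, and monotonicity survives. This argument is genuinely tied to the ``rectangle'' structure of $\cA$ (a conjunction of threshold events) contrasted with the ``complement-of-box'' structure of $\cB^c$ (a disjunction of the same thresholds); it does not generalize to an arbitrary increasing set $\cB^c$, which is why the paper never asserts conditional FKG but proves exactly \eqref{ared} and nothing more.
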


\begin{pf*}{Proof of Lemma~\ref{areaisal}}
Let us choose $\delta$ such that (with the notation of Lemma~\ref
{graincon}) $\delta_2\leq\varepsilon/6$.
We will prove two implications and deduce the result from them.
First we show that a lower bound on $\nu(\mathcal{A})$ gives a lower
bound on
$\nu(v(\bar\sigma))$
%
%
\begin{equation}
\label{ninjatwist} \forall\alpha>0,\qquad\nu(\mathcal{A})\geq
(1+\alpha)\mu(
\mathcal{A}) \quad\Rightarrow\quad\nu\bigl(v(\bar\sigma) \bigr
)\geq
\delta_1\alpha A \sqrt{k}.
\end{equation}
Then we show that if $(\nu-\mu)(\mathcal{A})$ is small, then the law
\textit
{of the skeletons} $\bar\mu$ and $\bar\nu$
must be close in total variation distance
%
%
\begin{equation}
\label{xencuts} \nu(\mathcal{A})\leq(1+\alpha)\mu(\mathcal{A})
\quad\Rightarrow
\quad\llVert\bar\nu-\bar\mu\rrVert\leq2\alpha+\delta_2.
\end{equation}
Now \eqref{xencuts} and \eqref{ninjatwist} for $\alpha=\varepsilon
/12$ (or
rather its contrapositive) combined
implies \eqref{crimoi} with $\eta:=\delta_1\alpha A$.

To prove \eqref{ninjatwist}, we first show, similar to \eqref
{froome2}, using the correlation inequality (Lemma~\ref{correlineq})
and the fact that the density $\bar\nu_{i,j}/\bar\mu_{i,j}$ is an
increasing function (Proposition~\ref{projecmon}),
that
%
%
\begin{eqnarray}
\label{sigij} \nu\bigl(\bar\sigma(i,j) \bigr) &\geq&(\nu-\mu)
(\mathcal
A_{i,j})\mu\bigl( \bar\sigma(i,j) \vert\mathcal A_{i,j}
\bigr)
\nonumber
\\
[-8pt]
\\[-8pt]
&&{}+(\nu-\mu) \bigl(\mathcal A_{i,j} ^c \bigr)\mu\bigl(
\bar\sigma(i,j) \vert\mathcal A^c_{i,j} \bigr).
\nonumber
\end{eqnarray}
Then we remark that the second term in the right-hand side of \eqref{sigij} is positive,
and deduce using the definition of $\mathcal A_{i,j}$,
%
%
\begin{equation}
\label{sigismon} \nu\bigl(\bar\sigma(i,j) \bigr)\geq(\nu-\mu)
(\mathcal
A_{i,j})\sqrt{N}A.
\end{equation}
We consider now the increasing function
\[
\theta(\sigma):= \Biggl(\sum_{i,j=1}^{K-1}
\mathbf{1}_{\mathcal
A_{i,j}} \Biggr)-\mathbf{1}_{\mathcal A}.
\]
Using the FKG inequality (Proposition~\ref{FKGbis}) applied to the
functions $\theta$ and \mbox{$(\nu/\mu-1)$}
we obtain
%
%
\begin{equation}
\sum_{i,j=1}^{K-1} (\nu-\mu) (\mathcal
A_{i,j})\geq(\nu-\mu) (\mathcal{A}).
\end{equation}
Hence summing inequality \eqref{sigismon} over $(i,j)\in\{1,\dots
,K-1\}^2$, one obtains that
%
%
\begin{equation}
\nu\bigl(v(\bar\sigma) \bigr)\geq\sqrt{N}A(\nu-\mu) (\mathcal{A}),
\end{equation}
which, together with \eqref{lesoufs}, implies \eqref{ninjatwist}.

To prove \eqref{xencuts} we need to show the following result.

Although it is quite an intuitive statement, the proof is a bit
technical, and we will perform it in Appendix~\ref{app}.

We go back to the proof of \eqref{xencuts}.
Assume that $\nu$ is increasing and satisfies
%
%
\begin{equation}
\label{assume} \nu(\mathcal{A})\leq(1+\alpha)\mu(\mathcal{A}).
\end{equation}
Then from \eqref{letrucmoche} we have
%
%
\begin{equation}
\label{graff} \nu\bigl(\mathcal{B} ^c \bigr)\leq(1+\alpha)\mu
\bigl( \mathcal{B} ^c \bigr).
\end{equation}
Notice also that from the definition, if
$\bar\sigma\in\mathcal{B} $, $\bar\sigma'\in\mathcal{A}$
(improperly one can
consider $\mathcal{A}$ and $\mathcal{B} ^c$ as subsets of $\bar S_N$),
then $\bar\sigma\leq\bar\sigma'$, and thus from Proposition~\ref
{projecmon},
%
%
\begin{equation}
\forall\bar\sigma\in\mathcal{B} , \forall\bar\sigma'\in
\mathcal{A},\qquad\frac{\bar\nu}{\bar\mu}(\bar\sigma)\leq
\frac{\bar\nu}{\bar
\mu} \bigl(\bar
\sigma' \bigr),
\end{equation}
which, once averaged on $\sigma\in\mathcal{A}$, gives [using \eqref{assume}]
%
%
\begin{equation}
\label{graf} \forall\bar\sigma\in\mathcal{B} , \qquad\frac{\bar
\nu(\bar
\sigma
)}{\bar\mu(\bar\sigma)}\leq
\frac{\nu}{\mu}(\mathcal{A})\leq1+\alpha.
\end{equation}
Hence using \eqref{graff}, \eqref{graf} and \eqref{lesoufs} we have
%
%
\begin{eqnarray}
\llVert\bar\mu-\bar\nu\rrVert&\leq&\int_{\mathcal{B} ^c}
\biggl(
\frac{\bar\nu}{\bar\mu}(\bar\sigma)-1 \biggr)_+\bar\mu
(\mathrm{d}\bar\sigma)+ \int
_{\mathcal{B} } \biggl(\frac{\bar\nu}{\bar\mu}(\bar\sigma)-1
\biggr)_+\bar
\mu(\mathrm{d}\bar\sigma)
\nonumber
\\[-8pt]
\\[-8pt]
&\leq&\bar\nu\bigl(\mathcal{B} ^c \bigr)+\alpha\bar\mu
(\mathcal{B}
) \leq(1+\alpha)\delta_2+ \alpha\leq2\alpha+\delta_2.
\nonumber
\end{eqnarray}
\upqed
\end{pf*}

\subsection{Proof of Proposition \texorpdfstring{\protect\ref{groom3}}{5.3}}

Between time $t_2$ and $t_3$, a consequence of the censoring is that
the values taken by the sets
\[
\sigma_t \bigl(\{x_{i-1}+1,\dots, x_{i}\}
\bigr), \qquad i\in\{1,\dots,K\}
\]
are constant in time. On this time interval, the dynamics can be
considered as a product of $K$ independent AT shuffle, and the corresponding
equilibrium measure conditioned on the starting point $\sigma_{t_2}$
is simply
\[
\mu\bigl( \cdot\vert\sigma\bigl(\{x_{i-1}+1,\dots,
x_{i}\} \bigr)=\sigma_{t_2} \bigl(\{x_{i-1}+1,
\dots, x_{i}\} \bigr),\ \forall i\in\{1,\dots,K\} \bigr)=:
\mu_{\sigma_{t_2}}.
\]

Using Proposition~\ref{wilson} and with the same reasoning as in the
proof of Proposition~\ref{groom1},
we have, for any realization of $\sigma_{t_2}$,
%
%
\begin{eqnarray}
\quad&& \bigl\llVert\mathbb{P} ( \sigma_{t_3}\in\cdot\vert
\sigma
_{t_2} ) - \mu_{\sigma_{t_2}} \bigr\rrVert_{\mathrm{TV}}
\nonumber
\\
&&\qquad\leq K\times10 \biggl(\frac{N}{K}+1 \biggr)\exp\biggl(-2t
\biggl(1-\cos\biggl((t_3-t_2)\frac{\pi}{(N/K+1)} \biggr)
\biggr) \biggr)
\\
&&\qquad\leq\varepsilon/3,
\nonumber
\end{eqnarray}
provided that $N$ has been chosen small enough.

Considering the push-forward of the measures on semi-skeleton, and
integrating on the event $\{ \bar\sigma_{t_2}=\xi\}$,
we obtain that for every $\xi\in\bar S_N$,
%
%
\begin{equation}
\label{petitdist} \bigl\llVert\widehat\nu_{t_3}( \cdot\vert\bar
\sigma=
\xi)-\widehat\mu(\cdot\vert\bar\sigma=\xi) \bigr\rrVert
_{\mathrm{TV}} \leq
\varepsilon/3.
\end{equation}
Finally, to conclude we just need to remark that the distribution of
$\bar\sigma_{t_3}$
is the same as the one of $\bar\sigma_{t_2}$ (indeed, with the
censoring we have $\bar\sigma_{t_3}=\bar\sigma_{t_2}$) which is
close to equilibrium, according to Proposition~\ref{groom2}, so that
we can conclude. More formally we have
%
%
\begin{eqnarray}
\label{ladecomm} 2\llVert\widehat\nu_{t_3}- \widehat\mu
\rrVert_{\mathrm{TV}}&=&\sum_{\xi\in\bar
S_N}\sum
_{\{
\widehat\sigma\in\widehat S_N \vert \bar\sigma=\xi\}} \big\vert
\widehat\nu_{t_3}( \widehat\sigma)-
\widehat\mu(\widehat\sigma)\big\vert
\nonumber
\\
&\leq&\sum_{\xi\in\bar S_N}\sum_{\{ \widehat\sigma\in\widehat
S_N \vert \bar\sigma=\xi\}}
\bar\nu_{t_3}(\xi) \big\vert\widehat\nu_{t_3} (\widehat\sigma
\vert\bar\sigma=\xi)- \widehat\mu(\widehat\sigma\vert\bar
\sigma=\xi) \big\vert
\nonumber
\\
&&\hspace*{67pt}{} + \widehat\mu(\widehat\sigma\vert\bar\sigma=\xi) \big\vert
\bar
\nu_{t_3}(\xi)-\bar\mu(\xi) \big\vert
\\
&=&2 \biggl(\llVert\bar\nu_{t_3}-\bar\mu\rrVert_{\mathrm{TV}}+
\sum_{\xi\in\bar S_N} \bar\nu_{t_3}(\xi) \bigl\llVert
\widehat\nu_{t_3}(\cdot\vert\bar\sigma=\xi)-\widehat\mu(\cdot
\vert
\sigma=\xi) \bigr\rrVert_{\mathrm{TV}} \biggr)
\nonumber
\\
&\leq&4\varepsilon/3,
\nonumber
\end{eqnarray}
where the last inequality uses Proposition~\ref{groom2} and \eqref{petitdist}.

\section{Technical tools for the exclusion process}

To compute the mixing time of the exclusion process, we need tools
similar those developed in Sections~\ref{monotool} and \ref{nomonotool}.
In many cases, the proof is either a consequence of or exactly similar
to the proof performed for $S_N$, and thus is left to the reader.

\subsection{Ordering \texorpdfstring{$\Omega_{N,k}$}{$Omega_{N,k}$} and monotonicity properties}

To each $\gamma\in\Omega_{N,k}$ we can associate a lattice path
$\eta$
in the following manner:
%
%
\begin{equation}
\label{defeta} \eta(x):=\sum_{z=1}^x
\gamma(z)-\frac{xk}{N}.
\end{equation}
It is an injective mapping.

In what follows we describe the dynamics only in terms of $\eta$ (and
write $\Omega_{N,k}$ for the image set of $\gamma\mapsto\eta$
as it brings no confusion).

We consider the natural order on $\Omega_{N,k}$ given by
%
%
\begin{equation}
\eta\geq\eta' \quad\Leftrightarrow\quad\forall x \in\{1,\dots,
N-1\},\qquad\eta(x)\geq\eta'(x).
\end{equation}
We call $\wedge$ the maximal element of $\Omega_{N,k}$ and $\vee$ its
minimal element.
These symbols are used because they look like the graphs of the
extremal paths.
We have
%
%
\begin{eqnarray}
\wedge(x)&=&N^{-1}\min\bigl( (N-k)x, k(N-x) \bigr),
\nonumber
\\[-8pt]
\\[-8pt]
\vee(x)&=&N^{-1}\max\bigl( -kx, (N-k) (x-N) \bigr).
\nonumber
\end{eqnarray}

Note that the mapping $\gamma\mapsto\eta$ corresponds the $k$th line
of the mapping
$\sigma\mapsto\widetilde\sigma$ [see \eqref{tildesigma}]
introduced in
Section~\ref{monotool}, or more precisely if $\gamma=\gamma_{\sigma
}$ is
the image of $\sigma$ by the mapping \eqref{gammasigma}, then $\eta
(\cdot)=\widetilde\sigma(\cdot,k)$.

For $\xi\in\Omega_{N,k}$, we write $(\eta^\xi_t)_{t\geq0}$ for the
dynamics with initial condition $\xi$ and $P_t^\xi$ for the marginal
law at time $t$.
If $\nu$ is a probability on $\Omega_{N,k}$, we write $P_t^\nu$ for the
law of $\eta_t$ starting with an initial condition that has
distribution $\nu$.

The projection on $\Omega_{N,k}$ of the graphical construction of
Section~\ref{graphix} provides a coupling of the different $(\eta^\xi
_t)_{t\geq0}$
that preserves the order, that is, which is such that
%
%
\begin{equation}
\label{orderpreservrrr} \xi\geq\xi' \quad\Rightarrow\quad\forall
t \geq0,
\qquad\eta^\xi_t \geq\eta^{\xi'}_t.
\end{equation}
In Section~\ref{glopal} we will present another construction that also
preserves the order.

\subsection{FKG and censoring and monotonicity conservation}

The statespace $\Omega_{N,k}$ is a distributive lattice when equipped
with the two operations
$\operatorname{\mathbf{min}}$ and $\operatorname{\mathbf{max}}$
defined (for $\eta, \xi\in\Omega_{N,k}$) as follows:
%
%
\begin{eqnarray}
\label{defvee} %
\forall x\in\Omega_{N,k},\qquad
\operatorname{\mathbf{min}}(\eta, \xi) (x)&=&\min\bigl(\eta
(x),\xi(x) \bigr),
\nonumber
\\[-8pt]
\\[-8pt]
\forall x\in\Omega_{N,k},\qquad\operatorname{\mathbf{max}}(\eta,
\xi)
(x)&=& \max\bigl(\eta(x),\xi(x) \bigr).
\nonumber
\end{eqnarray}
This means that $\Omega_{N,k}$ is stable by these operations and that
each one is distributive with respect to the other.
For this reason the FKG inequality as proved in \cite{cfFKG} is
valid. In the proof we also need a stronger result which is a
consequence Holley's inequality.

%
\begin{proposition}[{(\cite{cfFKG}, Proposition~1,
\cite{cfHolley}, Theorem~6)}]
If $f$ and $g$ are two increasing functions on $\Omega_{N,k}$, then
%
%
\begin{equation}
\label{FKG} \mu(fg)\geq\mu(f)\mu(g).
\end{equation}
Furthermore if $A$ and $B$ are increasing subsets of $\Omega_{N,k}$
such that
$A\subset B$ and $\operatorname{\mathbf{min}}(A, B)\subset B$, where
\[
\operatorname{\mathbf{min}}(A, B):= \bigl\{ \operatorname{\mathbf
{min}} \bigl(
\eta, \eta' \bigr) \vert\eta\in A, \eta'\in B \bigr
\},
\]
then for any increasing function $f$,
%
%
\begin{equation}
\label{Holley} \mu(f \vert A) \geq\mu(f \vert B).
\end{equation}
\end{proposition}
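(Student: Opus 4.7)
The plan is to reduce both assertions to the classical FKG inequality of \cite{cf:FKG} and Holley's inequality of \cite{cf:Holley}, once we have verified that the relevant lattice structure is in place. The key point — and really the only one requiring a word of justification — is that the set $\gO_{N,k}$, viewed as a set of lattice paths via \eqref{defeta}, is actually stable under the operations $\bmin$ and $\bmax$ defined in \eqref{defvee}. This is a classical fact: if $\eta,\eta'\in\gO_{N,k}$ then $\bmin(\eta,\eta')$ still starts at $0$, ends at $0$, and has increments in $\{1-k/N,-k/N\}$ (the increments are determined by crossings of $\eta$ and $\eta'$, and each local modification preserves the step alphabet). Distributivity of $(\bmin,\bmax)$ on real-valued functions is immediate, so $\gO_{N,k}$ is a finite distributive lattice.

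For \eqref{FKG}, the point is that $\mu$ is the uniform measure on $\gO_{N,k}$, so it trivially satisfies the log-modularity condition
\[
\mu(\bmax(\eta,\eta'))\,\mu(\bmin(\eta,\eta'))=\mu(\eta)\,\mu(\eta'),
\]
and in particular is log-supermodular. The inequality \eqref{FKG} is then a direct instance of the FKG inequality \cite{cf:FKG}.

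For \eqref{Holley}, introduce the two probability measures $\mu_A:=\mu(\cdot\mid A)$ and $\mu_B:=\mu(\cdot\mid B)$ on $\gO_{N,k}$. Since $\mu$ is uniform and $\mu(\bmax(\eta,\eta'))=\mu(\bmin(\eta,\eta'))=\mu(\eta)=\mu(\eta')$, Holley's hypothesis
\[
\mu_A(\bmax(\eta,\eta'))\,\mu_B(\bmin(\eta,\eta'))\geq \mu_A(\eta)\,\mu_B(\eta')\qquad\forall\eta,\eta'\in\gO_{N,k}
\]
reduces, after clearing normalisations, to the purely set-theoretic statement
\[
\ind_A(\eta)\,\ind_B(\eta')\leq \ind_A(\bmax(\eta,\eta'))\,\ind_B(\bmin(\eta,\eta')).
\]
Whenever $\eta\in A$ and $\eta'\in B$ we have $\bmax(\eta,\eta')\geq \eta\in A$, so $\bmax(\eta,\eta')\in A$ by monotonicity of $A$, and $\bmin(\eta,\eta')\in B$ by the standing assumption $\bmin(A,B)\subset B$. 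Applying Holley's theorem \cite{cf:Holley} we obtain $\mu_A\succeq\mu_B$ in the stochastic order, which by Lemma \ref{transport} gives $\mu(f\mid A)\geq \mu(f\mid B)$ for every increasing $f$, proving \eqref{Holley}.

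There is no real obstacle here beyond the verification of the lattice closure of $\gO_{N,k}$; once that is settled, both inequalities are instances of results already available in the literature, and the only care needed is in the bookkeeping of the Holley hypothesis (in particular, noticing that the condition $\bmin(A,B)\subset B$ stated in the proposition is exactly what one needs, and that the analogous $\bmax(A,B)\subset A$ comes for free from $A$ being increasing and $A\subset B$).
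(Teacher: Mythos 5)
Your proof is correct and follows essentially the same route as the paper: both reduce \eqref{FKG} to log-modularity of the uniform measure and \eqref{Holley} to Holley's theorem, verifying the Holley hypothesis by the identical case analysis (increasing $A$ closes $\bmax$, the standing assumption $\bmin(A,B)\subset B$ closes $\bmin$, and the hypothesis is vacuous when $\eta\notin A$ or $\eta'\notin B$). The only difference is that you explicitly verify closure of $\gO_{N,k}$ under $\bmin$ and $\bmax$, which the paper asserts without proof in the sentence preceding the proposition — a reasonable addition of rigor, not a change of method.
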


\begin{pf}
A sufficient condition for the FKG inequality \cite{cfFKG},
Proposition~1, to hold for $\mu$ is that
%
%
\begin{equation}
\mu\bigl(\operatorname{\mathbf{min}}(\eta,\xi) \bigr)\mu\bigl
(\operatorname{
\mathbf{max}}(\eta,\xi) \bigr)\geq\mu(\eta)\mu(\xi),
\end{equation}
which is obviously satisfied for the uniform measure on $\Omega_{N,k}$.
The second inequality is Holley's inequality \cite{cfHolley}, Corollary~11,
applied to $\mu(f \vert A)$ and
$\mu(f \vert B)$.
What has to be checked is that
%
%
\begin{equation}
\label{gromof} \mu\bigl(\operatorname{\mathbf{max}}(\eta,\xi)
\vert A \bigr)\mu
\bigl( \operatorname{\mathbf{min}}(\eta,\xi) \vert B \bigr)\geq
\mu(\eta\vert A)
\mu(\xi\vert B),
\end{equation}
which is obviously valid if either $\eta\notin A$ or $\xi\notin B$.
If $\eta\in A$ and $\xi\in B$, then, as $A$ is increasing
$\operatorname{\mathbf{max}}(\eta
,\xi)\in A$ and
from the assumption $\operatorname{\mathbf{min}}(A, B)\subset B$, we
have $\operatorname{\mathbf{min}}(\eta,\xi
) \in B$, and hence \eqref{gromof} holds in any case.
\end{pf}

Using the terminology of Section~\ref{graphix}, we say that an update
of $\eta_t$ is performed\vspace*{1pt} at the coordinate $x$ when $\mathcal{T} _x$ rings.
As in Section~\ref{censorsec}, we define $P_t^{\nu,\mathcal{C}}$ to
be the
law of $\eta_t$ which has performed a censored dynamics with scheme
$\mathcal{C}$ with initial
distribution $\nu$.

The reader can check that Proposition~\ref{censor} is also valid for
the chain $\eta_t$, and there are two different ways to do this,
either by saying that it is just \cite{cfPW}, Theorem~1.1, and
checking that our Markov chain with its system of updates is a monotone
system for the definition given in \cite{cfPW}, or by performing the
necessary changes
to the proof of Proposition~\ref{censor}.

Finally we remark that Proposition~\ref{proppres} also applies to the
exclusion process. To adapt the proof one needs
to consider, instead of $\sigma_x^{\bullet}$, the
sets
\[
\eta_x^{\bullet}:= \bigl\{\xi\in\Omega_{N,k} \vert
\forall y\ne x, \xi(y)=\eta(y) \bigr\},
\]
which, depending on the values of $\xi$ and $x$ can have either one or
two elements.
We record these results here.

%
\begin{proposition}\label{censorsep}
If $\nu$ is an increasing probability on $\Omega_{N,k}$, then for all
positive $t$ and all censoring schemes $\mathcal{C}$,
$P_t^\nu$ and $P^{\nu,\mathcal{C}}_t$ are increasing.

Furthermore we have
\[
\bigl\llVert P_t^\nu-\mu\bigr\rrVert_{\mathrm{TV}}
\leq\bigl\llVert P_t^{\nu,\mathcal{C}}-\mu\bigr\rrVert
_{\mathrm{TV}}.
\]
\end{proposition}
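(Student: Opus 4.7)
The statement packages two separate results: an analogue of Proposition \ref{proppres} (preservation of increasing densities by the dynamics, censored or not) and an analogue of Proposition \ref{censor} (the Peres--Winkler censoring inequality). The paper indicates that both follow by copying the arguments already performed for $S_N$, with $\sigma_x^\bullet$ everywhere replaced by $\eta_x^\bullet$. My plan is to carry out this translation: the conceptual steps do not change, so the only point to check is that the single place where the $S_N$ proof exploited $|\sigma_x^\bullet|=2$ still functions when $|\eta_x^\bullet|$ may equal $1$ (which happens exactly when $\gamma(x)=\gamma(x+1)$, i.e.\ both sites occupied or both empty).

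For the preservation of monotonicity, I would argue at the level of the density $h_t := P_t^\nu/\mu$. By reversibility $h_t$ solves $\partial_t h_t = \cL h_t$, and using the decomposition $\cL = \sum_{x=1}^{N-1} \cL_x$ with $\cL_x f(\eta) = f(\tau_x \cdot \eta) - f(\eta)$ together with the Trotter product formula, it suffices to check that each one-site semigroup $e^{t\cL_x}$ preserves the cone of increasing functions. This reduces to a pointwise inequality on the pair $\{\eta, \tau_x \cdot \eta\}$: when $\eta_x^\bullet$ is a singleton, $\cL_x$ acts trivially and the claim is immediate; when $\eta_x^\bullet = \{\eta, \tau_x \cdot \eta\}$, these two elements are order-incomparable in $\gO_{N,k}$ (they differ only at position $x$ in the lattice-path encoding), and the averaging $h \mapsto \tfrac{1}{2}(h + h\circ\tau_x)$ replaces both values by a common one, which preserves the relation $h(\eta') \le h(\eta'')$ for every ordered pair $\eta' \le \eta''$. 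The same argument applies verbatim to every censored sub-semigroup, which only omits some of the $\cL_x$.

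For the censoring inequality, I would follow the Peres--Winkler scheme used in the proof of Proposition \ref{censor}: one compares the two dynamics by inserting the missing updates on short time intervals and showing that each such insertion only decreases $\|h_t - 1\|_{L^1(\mu)}$. The key pointwise inequality on pairs $\{\eta, \tau_x\cdot\eta\}$ is where the monotonicity of $h$ just propagated enters decisively --- if $\eta \le \tau_x\cdot\eta$ then $h(\eta) \le h(\tau_x\cdot\eta)$, which is exactly what is needed for the local averaging to diminish the $L^1$ discrepancy. In the singleton case the local contribution is automatic, so those sites only simplify the accounting. The main obstacle is therefore purely technical: one must traverse the $S_N$ proof carefully and verify that every use of $|\sigma_x^\bullet|=2$ either continues to apply verbatim for a 2-element $\eta_x^\bullet$ or becomes vacuous for a singleton $\eta_x^\bullet$, which is precisely why the paper only indicates the necessary change in notation rather than rewriting the full argument.
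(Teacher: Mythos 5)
There is a genuine error at the heart of your argument for preservation of monotonicity: the two elements of $\eta_x^\bullet$ are \emph{not} order-incomparable. In the lattice-path encoding they differ only at the single coordinate $x$, and differing at one coordinate while agreeing everywhere else is precisely the situation where one path dominates the other pointwise; this is the exact analogue of $\sigma_x^+\geq\sigma_x^-$ in $S_N$, and is the fact the paper's Lemma \ref{conservinc} hinges on. Your second paragraph even uses this comparability (``if $\eta\le\tau_x\cdot\eta$ then $h(\eta)\le h(\tau_x\cdot\eta)$''), contradicting the earlier claim. Because you build the preservation-of-monotonicity step on the incomparability claim, you never actually address the case that does the work: to show $\theta_x h$ is increasing one must compare $\theta_x h(\eta')$ and $\theta_x h(\eta'')$ for an arbitrary ordered pair $\eta'\le\eta''$ lying in \emph{different} $\tau_x$-orbits (possibly of different sizes). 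The clean way to do this, and the one the paper's sketch points to, is to observe that the maps $\eta\mapsto\eta_x^+$ and $\eta\mapsto\eta_x^-$ are order-preserving on $\gO_{N,k}$ (one sets $\eta_x^+=\eta_x^-=\eta$ when $\eta_x^\bullet$ is a singleton; then $\eta_x^\pm(x)$ are respectively a $\min$ and a $\max$ of the neighbouring heights $\eta(x-1),\eta(x+1)$, which is manifestly monotone in $\eta$). With that in hand the proof of Lemma \ref{conservinc} and the subsequent machinery (Lemmas \ref{microdil}, \ref{increden}, Proposition \ref{detcensor}) transfer verbatim, singleton orbits simply contributing trivial equalities.

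Your packaging of the monotonicity-preservation step via the Trotter product formula is a legitimate alternative to the paper's fix-the-Poisson-clock-and-compose-single-updates route, and if you replaced the faulty ``incomparability'' justification by the monotonicity of $\eta\mapsto\eta_x^\pm$ the argument would go through; but as written the key sub-step is supported by a statement that is false, and the one you actually need is exactly what that false statement hides. The discussion of the censoring part is too vague to assess on its own merits (``inserting missing updates on short time intervals and showing each insertion decreases $\|h_t-1\|_{L^1}$'' glosses over the real point, which is that inserting an update \emph{in the middle} of a sequence is controlled by moving it to the end using stochastic-domination preservation, i.e.\ the content of Proposition \ref{detcensor}), but the proposed remedy --- rerun the $S_N$ proof checking the singleton case --- is the right plan.
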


\subsection{Stability for projection}

The equivalent of Proposition~\ref{projecmon} is valid for $\Omega_{N,k}$
and is in fact much easier to prove.

We define $\bar\eta$ the skeleton of $\eta$ as [recall \eqref{defxi}]
%
%
\begin{equation}
\forall i \in\{0,\dots,K\},\qquad\bar\eta(i)=\eta(x_i)
\end{equation}
and equip the set of skeletons $\bar\Omega_{N,k}$ with the natural order.
For $\nu$ probability law on $\Omega_{N,k}$, define $\bar\nu$ to be
the pushed forward law for the projection
$\eta\mapsto\bar\eta$. We define in the same manner $\bar\nu_i$
for the projection on one coordinate.

%
\begin{proposition}\label{projecmonsep}
If $\nu$ is an increasing probability on $\Omega_{N,k}$,
then the density of $\bar\nu/\bar\mu$ is an increasing function of
$\bar\Omega_{N,k}$.

The density $\bar\mu_i$ is also increasing.
\end{proposition}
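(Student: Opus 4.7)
The plan is to mirror the structure of Proposition \ref{projecmon}, but exploiting the fact that $\gO_{N,k}$ is a distributive lattice under $\bmin$ and $\bmax$ (unlike $S_N$), which makes everything much cleaner. The key tool is the general form of Holley's inequality from \cite{cf:Holley}: two probability measures $\mu_1,\mu_2$ on $\gO_{N,k}$ satisfying the lattice condition
\[
\mu_1(\bmax(\eta,\eta'))\,\mu_2(\bmin(\eta,\eta')) \;\geq\; \mu_1(\eta)\,\mu_2(\eta') \qquad \forall \eta,\eta'\in\gO_{N,k}
\]
satisfy $\mu_1\succeq\mu_2$.

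\textbf{Step 1 (skeleton-conditional domination).} I first show that for any two admissible skeletons $\bar\eta^1\ge\bar\eta^2$, one has $\mu(\cdot \mid \bar\eta=\bar\eta^1) \succeq \mu(\cdot \mid \bar\eta=\bar\eta^2)$. Setting $C_j:=\{\eta\in\gO_{N,k}:\bar\eta=\bar\eta^j\}$, the conditional laws $\mu_j=\mu(\cdot\mid C_j)$ are uniform on $C_j$. For $\eta\in C_1$ and $\eta'\in C_2$, stability of $\gO_{N,k}$ under $\bmin,\bmax$ gives $\bmax(\eta,\eta'),\bmin(\eta,\eta')\in\gO_{N,k}$; moreover at each pinning point $x_i$,
\[
\bmax(\eta,\eta')(x_i)=\max(\bar\eta^1(i),\bar\eta^2(i))=\bar\eta^1(i),\qquad \bmin(\eta,\eta')(x_i)=\bar\eta^2(i),
\]
so $\bmax(\eta,\eta')\in C_1$ and $\bmin(\eta,\eta')\in C_2$. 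The Holley lattice condition then holds with equality (both sides equal $|C_1|^{-1}|C_2|^{-1}$ when $\eta\in C_1,\eta'\in C_2$, and both are zero otherwise), yielding the desired domination.

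\textbf{Step 2 (first assertion).} Writing $f:=\nu/\mu$, which is increasing on $\gO_{N,k}$ by hypothesis, a direct computation gives
\[
\frac{\bar\nu(\bar\eta)}{\bar\mu(\bar\eta)} \;=\; \mu(f \mid \bar\eta).
\]
By Step 1 and the equivalence in Lemma \ref{transport}, the right-hand side is increasing in $\bar\eta$.

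\textbf{Step 3 (second assertion).} Applying Step 1 to the trivial case of a single pinning point $\eta(x_i)=z$ shows that $\mu(\cdot\mid\eta(x_i)=z)$ is stochastically increasing in $z$; this property is preserved by the monotone push-forward through the skeleton projection, so $\bar\mu(\cdot\mid\bar\eta(i)=z)$ is also increasing in $z$. Combined with the identity
\[
\frac{\bar\nu_i(z)}{\bar\mu_i(z)} \;=\; \bar\mu\!\left(\frac{\bar\nu}{\bar\mu}\;\Big|\;\bar\eta(i)=z\right)
\]
and the fact that $\bar\nu/\bar\mu$ is increasing (by Step 2), Lemma \ref{transport} gives the monotonicity of $\bar\nu_i/\bar\mu_i$.

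The only non-trivial point is Step 1, and its entire content is the observation that the pinning constraint $\bar\eta=\bar\eta^j$ is respected coordinate-by-coordinate by $\bmin$ and $\bmax$ once $\bar\eta^1\ge\bar\eta^2$; after this observation, Holley's inequality reduces to an equality and no further work is required. This is precisely the sort of simplification promised in the paper's comment that "the equivalent of Proposition \ref{projecmon} \dots is in fact much easier to prove."
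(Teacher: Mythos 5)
Your proof is correct, and it takes a genuinely different route from the paper's. The paper's proof is the remark ``identical to that of \eqref{ledernier}'': one shows that each conditional set $\{\eta : \bar\eta = \bar\eta^j\}$ has a maximal element, that these maxima are ordered, runs two reflected Markov chains from them coupled via the graphical construction, and invokes Lemma \ref{limitlema} to pass to the limit; the two assertions about densities then follow by integrating $\nu/\mu$ against the resulting stochastic domination (as in items (iii) and (iv) of Proposition \ref{projecmon}). You instead prove the same domination $\mu(\cdot\mid\bar\eta=\bar\eta^1)\succeq\mu(\cdot\mid\bar\eta=\bar\eta^2)$ statically, via Holley's inequality: the key observation is that when $\bar\eta^1\ge\bar\eta^2$, taking $\bmax$ and $\bmin$ of paths with these skeletons yields again paths with these skeletons, so the lattice condition holds with equality term by term. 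This exploits precisely the feature $\gO_{N,k}$ has and $S_N$ lacks -- closure under $\bmin,\bmax$ -- which is what makes the permutation version (Proposition \ref{projecmon}) require the roundabout reflected-chain argument, and is the ``much easier to prove'' simplification hinted at in the text. Your route avoids all dynamics, the explicit identification of maximal skeletons, and the limiting argument, at the (small) cost of invoking Holley for measures that are not strictly positive on $\gO_{N,k}$; this is legitimate here (e.g.\ via the Ahlswede--Daykin four-functions inequality, or by noting that the supports $C_1,C_2$ satisfy $\bmax(C_1,C_2)\subseteq C_1$ and $\bmin(C_1,C_2)\subseteq C_2$), and is consistent with the way the paper itself already applies Holley to conditional measures in its FKG/Holley proposition for $\gO_{N,k}$. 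The reduction of the second assertion to the first via $\bar\nu_i(z)/\bar\mu_i(z)=\bar\mu\bigl(\bar\nu/\bar\mu\mid\bar\eta(i)=z\bigr)$ and the push-forward of stochastic monotonicity is also sound.
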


The proof is identical to that of \eqref{ledernier}.

\subsection{Limit of the mean height and rough upper bounds on the
mixing time}

As $\eta_t$ has the same law as $\widetilde\sigma_t(\cdot, k)$,
Lemma~\ref{boudk} gives us the behavior of the mean
value $\mathbb{E} [\eta^\xi_t(x) ]$.
More precisely, we have the following:

%
\begin{lemma}\label{boudk2}
For all $k\leq N/2$ we have:
\begin{itemize}
\item for any $\xi\in S_N$ and $t\geq0$, we have
%
%
\begin{equation}
\label{groupeta} \max_{x\in\{0,\dots,N\}}\mathbb{E} \bigl[
\eta^\xi_t(x) \bigr] \leq4 k e^{-\lambda_N t },
\end{equation}
where
\[
\lambda_N:=2 \biggl(1-\cos\biggl(\frac{\pi}{N} \biggr)
\biggr)=\frac
{\pi^2}{N^2} \bigl(1+o(1) \bigr);
\]
\item when $\xi=\wedge$,
%
%
\begin{equation}
\label{groupeta2} \mathbb{E} \bigl[ \eta^\wedge_t(x) \bigr]
\geq\frac{k}{\pi}\exp(-\lambda_N t) \sin\biggl(
\frac{\pi x}{N} \biggr).
\end{equation}
\end{itemize}
\end{lemma}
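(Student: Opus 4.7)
The plan is to deduce both bounds directly from Lemma \ref{boudk} by exploiting the projection from the AT-shuffle to the exclusion process.

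First, I would record the identity tying together $\tilde\sigma(\cdot,k)$ and $\eta$. Comparing the definitions \eqref{tildesigma} and \eqref{defeta}, for any $\sigma\in S_N$ and the associated particle configuration $\gamma_\sigma$ introduced in \eqref{gammasigma},
\begin{equation*}
\tilde\sigma(x,k)=\sum_{z=1}^x \ind_{\{\sigma(z)\le k\}}-\frac{xk}{N}=\sum_{z=1}^x \gamma_\sigma(z)-\frac{xk}{N}=\eta(\gamma_\sigma)(x).
\end{equation*}
Since the projection \eqref{gammasigma} intertwines the AT-shuffle with the exclusion process (noted right after \eqref{gammasigma}), picking any lift $\sigma_0\in S_N$ with $\gamma_{\sigma_0}=\xi$ yields that $(\tilde\sigma_t(\cdot,k))_{t\ge 0}$ has the same law as $(\eta^\xi_t)_{t\ge 0}$.

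For the upper bound \eqref{groupeta} I would then apply \eqref{group} of Lemma \ref{boudk} with $y=k$. Because the standing assumption $k\le N/2$ gives $\min(k,N-k)=k$, it follows immediately that
\begin{equation*}
\max_{x\in\{0,\dots,N\}} \bbE[\eta^\xi_t(x)]=\max_{x\in\{0,\dots,N\}} \bbE[\tilde\sigma_t(x,k)]\le 4k\, e^{-\lambda_N t}.
\end{equation*}

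For the lower bound \eqref{groupeta2} I would verify that the identity permutation projects onto the maximal path $\wedge$: indeed $\gamma_{\ind}(x)=\ind_{\{x\le k\}}$, so
\begin{equation*}
\eta(\gamma_{\ind})(x)=\min(x,k)-\frac{xk}{N}=N^{-1}\min\bigl((N-k)x,\,k(N-x)\bigr)=\wedge(x).
\end{equation*}
Choosing $\sigma_0=\ind$ and using \eqref{group3} of Lemma \ref{boudk} with $y=k$ (again $\min(k,N-k)=k$) gives
\begin{equation*}
\bbE[\eta^\wedge_t(x)]=\bbE[\tilde\sigma_t(x,k)]\ge \frac{k}{\pi}\sin\!\Bigl(\frac{\pi x}{N}\Bigr)e^{-\lambda_N t}.
\end{equation*}

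There is no genuine obstacle: the content of the lemma is already contained in Lemma \ref{boudk} once the identification via the projection is noticed. The only two items requiring verification are the pointwise equality $\tilde\sigma(\cdot,k)=\eta(\gamma_\sigma)(\cdot)$ and the fact that $\gamma_{\ind}=\wedge$, both of which follow directly from the definitions.
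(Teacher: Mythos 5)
Your proof is correct and follows exactly the route the paper intends: the paper simply remarks, just before stating the lemma, that ``$\eta_t$ has the same law as $\tilde\sigma_t(\cdot,k)$'' and invokes Lemma \ref{boudk}, which is precisely what you carry out (with the two verifications --- the pointwise identity $\tilde\sigma(\cdot,k)=\eta(\gamma_\sigma)$ and $\gamma_{\ind}\mapsto\wedge$ --- spelled out).
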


Similar to Proposition~\ref{wilson} we have the following upper
bound for the distance to equilibrium.

%
\begin{proposition}\label{wilson2}
For all $N$ sufficiently large and $k\in\{0,\dots,N\}$, for all
$\varepsilon>0$,
%
%
\begin{equation}
d^{N,k}(t)\leq10k \exp(-t \lambda_N),
\end{equation}
where
\[
\lambda_N:=2 \bigl(1-\cos(\pi/N) \bigr).
\]
\end{proposition}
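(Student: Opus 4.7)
The approach is to adapt Wilson's strategy for the AT-shuffle (Proposition~\ref{wilson}) to the exclusion process, using the graphical construction, the heat-equation bounds of Lemma~\ref{boudk2}, and a spectral/$L^2$ argument.

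First, I would use the monotone coupling to reduce the problem to bounding the coupling probability between the two extremal initial conditions $\wedge$ and $\vee$. By the grand coupling (Proposition~\ref{orderpreservrrr}), we have $\eta_t^\vee \le \eta_t^\xi \le \eta_t^\wedge$ simultaneously for every $\xi \in \gO_{N,k}$. Combined with the coupling characterization of total-variation distance (Lemma~\ref{caravar}) applied to $\eta_t^\xi$ and $\eta_t^{\xi'}$ with $\xi'\sim \mu$, this yields
\begin{equation*}
d^{N,k}(t) \;=\; \max_\xi \|P_t^\xi - \mu\|_{TV} \;\le\; \bbP(\eta_t^\wedge \ne \eta_t^\vee).
\end{equation*}

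The bulk of the work lies in bounding this probability by $10k\,e^{-\gl_N t}$. The natural tool is the first Fourier eigenfunction of the generator, $\Phi(\eta):=\sum_{x=1}^{N-1}\sin(\pi x/N)\,\eta(x)$, which satisfies $\bbE^\xi[\Phi(\eta_t)] = e^{-\gl_N t}\Phi(\xi)$ (this is precisely the heat-equation statement used to derive Lemma~\ref{boudk2} restricted to the $y=k$ line). Combining Cauchy--Schwarz, $4\|P_t^\wedge - \mu\|_{TV}^2 \le \chi^2(P_t^\wedge,\mu)$, with the spectral decomposition
\begin{equation*}
\chi^2(P_t^\wedge,\mu) \;=\; \sum_{j \ge 1} \phi_j(\wedge)^2\,e^{-2\gl_j t},
\end{equation*}
one can exploit the known structure of the exclusion spectrum (by the Aldous--Caputo--Liggett--Richthammer theorem, eigenvalues take the form $\sum_{i\in S}\gl_i$ for $S\subset\{1,\dots,N-1\}$ with $|S|\le \min(k,N-k)$, the smallest non-zero one being $\gl_N$). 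The dominant contribution comes from the single eigenfunction $\phi_1 \propto \Phi$; the pointwise bound $\wedge(x)\le \min(k,N-k)$ together with the normalization $\|\Phi\|_{L^2(\mu)}^2 = O(kN^2)$ gives $\phi_1(\wedge)^2 \le Ck$, so that the leading contribution to $\chi^2$ is at most $C k\, e^{-2\gl_N t}$. The higher eigenvalues contribute a geometric-type tail dominated by the next eigenvalue $\gl_N+\gl_2$, which can be absorbed into the constant~$10$.

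The main obstacle is the bookkeeping for the sum over higher eigenvalues, whose multiplicities $\binom{N-1}{m}$ grow combinatorially---one needs sharp-enough bounds on $\phi_j(\wedge)$ (via the Fourier expansion of the tent-shaped $\wedge$, whose coefficients decay like $k/j^2$) so that the exponential factor $e^{-2\gl_j t}$ wins. Alternatively, a purely probabilistic argument proceeds by bounding $\bbE[\sum_x \zeta_t(x)]$ for $\zeta_t := \eta_t^\wedge - \eta_t^\vee\ge 0$ using Lemma~\ref{boudk2}, and applying Markov's inequality $\bbP(\zeta_t \not\equiv 0) \le \bbE[\sum_x \zeta_t(x)]$; this gives a weaker bound with an extra factor of $N$, but already suffices as input to the refined arguments of Sections~\ref{proofmres} and~\ref{excluproc}. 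In either approach, the constant~$10$ is far from sharp---one actually expects $O(\sqrt{k})$ to be the correct dependence---but since Proposition~\ref{wilson2} is used only as a rough input, optimizing it is unnecessary.
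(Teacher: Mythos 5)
Your opening reduction $d^{N,k}(t)\le\bbP(\eta_t^\wedge\ne\eta_t^\vee)$ under the monotone grand coupling is valid, but it is not what the paper does, and neither of the two routes you propose for bounding that probability actually closes the argument. The paper proceeds very differently: it uses the crude triangle-inequality bound \eqref{gramsci2} to reduce to $\max_{\xi,\xi'}\|P_t^\xi-P_t^{\xi'}\|_{TV}$, lifts to the AT-shuffle, and builds a specific \emph{non-monotone, non-Markovian} coupling in which the two deck evolutions update a transposition $\tau_x$ jointly if the two decks agree in at least one of positions $x,x+1$ and independently otherwise. Under this coupling the pair of positions of any fixed card, $(X^i_t,Y^i_t)$, is a two-dimensional Markov chain; one union-bounds over the $k$ relevant cards and controls each non-coalescence probability by diagonalizing the killed two-dimensional walk (Lemma \ref{wilson9}), giving $10k\,e^{-t\gl_N}$. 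The factor $k$ comes precisely from the union bound over particles, not from any area or spectral estimate.

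Your first route (the $\chi^2$/spectral decomposition) has two separate problems. First, you acknowledge but do not resolve the main difficulty: bounding the contribution of the exponentially many higher eigenfunctions; the proposal as written stops exactly at the step where the work would have to happen. Second, there is a logical mismatch with your reduction: a bound on $\|P_t^\wedge-\mu\|_{TV}$ (or on $\|P_t^\wedge-P_t^\vee\|_{TV}$) only shows that \emph{some} coupling makes the two configurations agree with high probability; it does not bound $\bbP(\eta_t^\wedge\ne\eta_t^\vee)$ for the \emph{monotone} coupling you used to get $d^{N,k}(t)\le\bbP(\eta_t^\wedge\ne\eta_t^\vee)$ in the first place. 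So the two steps do not compose.

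Your second route (Markov's inequality on the area, giving roughly $8k\gD x\,e^{-t\gl_{\gD x}}$ on a block of $\gD x$ sites) is correct but the extra factor of the block length is fatal, contrary to your claim that it "already suffices as input." In the proof of Proposition \ref{topandbottom} the bound is applied block-by-block and summed, yielding $\sum_i k_i\,e^{-\gl_{\gD x_i}(t_0-t_2)}\le kK\,k^{-1/(24\gd)}$, which vanishes. With your weaker bound the sum becomes $\sum_i k_i\gD x_i\,e^{-\gl_{\gD x_i}(t_0-t_2)}\le kN\,k^{-1/(24\gd)}$, and since Theorem \ref{mainressep} must hold for $k$ growing arbitrarily slowly (e.g.\ $k=\log\log N$), the factor $N$ is not compensated and the bound blows up. The prefactor in Proposition \ref{wilson2} really must be $k$, not $kN$, and obtaining that $k$ is exactly why the paper tracks particles individually rather than estimating the area.
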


The idea of the proof essentially comes from \cite{cfWilson}, Section~8.1,
with some modification performed to adapt to continuous time and the
fact that we deal with the exclusion process.
The reader can check that taking $k=N$ in the proof gives a proof of
Proposition~\ref{wilson}.

\begin{pf*}{Proof of Proposition \ref{wilson2}}
Using \eqref{gramsci2}, it is sufficient to bound the distance
$\llVert P^\xi_t-P^{\xi'}_t\rrVert _{\mathrm{TV}}$ uniformly in
$\xi$, $\xi'$.
To this end, we construct a coupling of $\eta_t^\xi$ and $\eta
_t^{\xi'}$ (which is not the one given by the graphical construction
and is not even Markovian) and prove that for this coupling,
%
%
\begin{equation}
\label{xidor} \mathbb{P} \bigl[\eta_t^\xi\ne
\eta_t^{\xi'} \bigr] \leq10k \exp(-t \lambda_N).
\end{equation}
It is in fact more convenient to consider the AT shuffle and
construct a coupling for this larger process.
Instead of proving \eqref{xidor}, we prove that for all $\xi$, $\xi
'\in S_N$,
%
%
\begin{equation}
\label{zidor} \mathbb{P} \bigl[\forall i\in\{1,\dots, k\}, \bigl(
\sigma_t^\xi\bigr)^{-1}(i) = \bigl(
\sigma_t^{\xi'} \bigr)^{-1}(i) \bigr] \leq10k
\exp(-t \lambda_N),
\end{equation}
and then deduce \eqref{xidor} from \eqref{zidor} using that the
mapping \eqref{gammasigma} projects the AT shuffle on the exclusion process.

The coupling has the following rules:
\begin{itemize}
\item if $\sigma^\xi_t(x)\ne\sigma^{\xi'}_t(x)$ and $\sigma^{\xi
}_t(x+1)\ne\sigma^{\xi'}_t(x+1)$, then the transition $\sigma\to
\sigma\circ\tau_x$
occurs independently with rate one for each of the two processes;
\item if either $\sigma^\xi_t(x)= \sigma^{\xi'}_t(x)$ or $\sigma
^{\xi}_t(x+1)= \sigma^{\xi'}_t(x+1)$ (or both), then the transition
$\sigma\to\sigma\circ\tau_x$
occurs simultaneously for the two processes (with rate one).
\end{itemize}

Let $X^i_t:=(\sigma^{\xi}_t)^{-1}(i)$ and $Y^i_t(\sigma^{\xi
'}_t)^{-1}(i)$ denote the trajectory of the particle labeled $i$ for
the two coupled permutations.
The couple $(X^i_t,Y^i_t)$ is a Markov chain with the following
transition rules:
\begin{itemize}
\item if $x\ne y$, then the transitions $(x,y)\to(x\pm1,y)$,
$(x,y)\to(x,y\pm1)$ occur with rate one, provided
the two coordinates stay between $1$ and $n$;
\item if $x=y$, then the transitions $(x,y)\to(x+1,y+1)$ and $(x,y)\to
(x-1,y-1)$ occur with rate one, provided the two coordinates stay
between $1$ and $n$.
\end{itemize}
All the other transitions have rate $0$. In particular, once $X^i_t$
and $Y^i_t$ have merged, they stay together.

By union bound, we have
%
%
\begin{eqnarray}
&&\mathbb{P} \bigl[\exists i\in\{1,\dots, k\}, \bigl(\sigma_t^\xi
\bigr)^{-1}(i) \ne\bigl(\sigma_t^{\xi'}
\bigr)^{-1}(i) \bigr]
\nonumber
\\[-8pt]
\\[-8pt]
&&\qquad\leq k \max_{(x,y)\in\{1,\times, N\}^2} \mathbf{P} _{x,y}[X_t
\ne Y_t],
\nonumber
\end{eqnarray}
where $(X_t,Y_t)$ is a Markov chain starting from $(x,y)$ and whose
transitions rules are the same as those of $(X^i_t,Y^i_t)$.

We conclude by using the following lemma.

%
\begin{lemma}\label{wilson9}
We have for all $(x,y)$,
%
%
\begin{equation}
\label{cfaz} \mathbf{P} _{x,y}[X_t\ne Y_t]
\leq10 \exp(-t \lambda_N).
\end{equation}
\end{lemma}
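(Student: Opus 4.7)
The plan is to exhibit a non-negative eigenfunction $\phi$ of the generator of $(X_t,Y_t)$ that vanishes exactly on the diagonal and has eigenvalue $-\gl_N$, and then translate the resulting exponential decay of $\bE[\phi(X_t,Y_t)]$ into the required tail bound via Markov's inequality.

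The natural candidate is $\phi(x,y):=|g(x)-g(y)|$, where $g(x):=\cos\bigl(\pi(x-\tfrac12)/N\bigr)$ is the principal Neumann eigenfunction of the single-walker generator $L$ on $\{1,\dots,N\}$ under the ``suppressed-at-the-boundary'' rule implicit in the transitions of $(X_t,Y_t)$. First I would verify $Lg=-\gl_N g$: in the bulk this reduces to the identity $\cos(A-\theta)+\cos(A+\theta)=2\cos A\cos\theta$ with $\theta=\pi/N$, and at the endpoints it reduces to the double-angle identity $\cos(3\pi/(2N))=(2\cos(\pi/N)-1)\cos(\pi/(2N))$.

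Next I would check that $\mathcal L\phi=-\gl_N\phi$ on the whole of $\{1,\dots,N\}^2$, treating three cases. When $|x-y|\ge 2$, each allowed transition preserves the sign of $g(x)-g(y)$, so the identity follows by applying $Lg=-\gl_N g$ separately to the two coordinates. In the near-diagonal cases $y=x\pm 1$, the two transitions that would reverse the sign of $g(x)-g(y)$ instead land the chain on the diagonal where $\phi=0$; using the recursion $g(x\pm 2)=(2-\gl_N)g(x\pm 1)-g(x)$, which is just a rewriting of $Lg=-\gl_N g$, one verifies by direct computation that the eigenvalue equation still holds, both in the interior and at the boundary state $(1,2)$ (and by symmetry $(N-1,N)$). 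Finally, on the diagonal both sides vanish since the chain stays on it and $\phi=0$ there. With this in hand, $e^{\gl_N t}\phi(X_t,Y_t)$ is a bounded martingale, so
\[
\bE_{x,y}[\phi(X_t,Y_t)]=\phi(x,y)\,e^{-\gl_N t}\le 2\,e^{-\gl_N t},
\]
using $|g|\le 1$.

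The main obstacle is turning this mean estimate into a bound with constant prefactor $10$. The crude route --- Markov's inequality with the minimum value $\min_{x\ne y}\phi(x,y)=g(N-1)-g(N)=2\sin(\pi/N)\sin(\pi/(2N))\asymp N^{-2}$ --- only yields $\bP_{x,y}[X_t\ne Y_t]\le O(N^2)e^{-\gl_N t}$, spurious by a factor $N^2$. To recover a universal constant I would split regimes: for $\gl_N t\le\log 10$ the bound is already trivial since $\bP\le 1\le 10\,e^{-\gl_N t}$, while for $\gl_N t>\log 10$ one exploits the fact that the conditional law of $(X_t,Y_t)$ given $\{X_t\ne Y_t\}$ is close to the quasi-stationary distribution of the chain absorbed on the diagonal (whose density is proportional to $\phi$), so that the ratio $\bE[\phi(X_t,Y_t)]/\bP[X_t\ne Y_t]$ is controlled by $\max\phi\asymp 1$ rather than $\min_{x\ne y}\phi\asymp N^{-2}$. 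Implementing this quasi-stationary comparison --- most efficiently through an $L^2$ contraction of the sub-Markov semigroup onto the one-dimensional span of the principal eigenfunction $\phi$ --- is the genuinely technical step, and once established it delivers \eqref{cfaz} with the stated constant.
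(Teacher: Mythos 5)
Your eigenfunction is correct: with $g(x)=\cos(\pi(x-\tfrac12)/N)$ one can verify that $\phi(x,y)=|g(x)-g(y)|$ solves $\mathcal L\phi=-\gl_N\phi$ everywhere, including on and next to the diagonal and at the corners, so $e^{\gl_N t}\phi(X_t,Y_t)$ is indeed a martingale. (In fact $\phi$ is, up to sign, the top eigenfunction $u_{0,1}$ of the spectral basis that the paper works with.) The problem is that this eigenfunction identity, by itself, only yields $\bE[\phi(X_t,Y_t)]\le 2e^{-\gl_N t}$, and as you yourself note, converting this mean estimate into a tail bound with constant prefactor is exactly where all the work lives. The step you wave at --- ``$L^2$ contraction onto the span of $\phi$'' / a quasi-stationary comparison --- does not close the gap as sketched: writing $\ind_{\{x<y\}}=a\phi+\psi$ with $\psi\perp\phi$, you have $\|\psi\|_2=\Theta(N)$, and the only decay you get for the remainder term $(P^*_t\psi)(x_0,y_0)$ from the spectral gap alone is of order $Ne^{-\gl_{(0,2)}t/2}e^{-\gl_N t/2}$. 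Since $\gl_{(0,2)}\approx 4\gl_N$, this beats $10e^{-\gl_N t}$ only once $t\gtrsim N^2\log N$, whereas your trivial regime stops at $t=(\log 10)/\gl_N\approx 0.23\,N^2$. There is an entire window $t\in[\Theta(N^2),\Theta(N^2\log N)]$ left uncovered, and it is precisely the window that matters.

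The paper's proof (following Wilson) avoids this by doing a \emph{full} diagonalization of the killed chain: it uses all the eigenfunctions $u_{i,j}(x,y)=\cos(i\pi x/N)\cos(j\pi y/N)-\cos(i\pi y/N)\cos(j\pi x/N)$ for $0\le i<j<N$, uses the \emph{entire} spectral profile $\gl_{i,j,N}\ge(i+j)\gl_N$ (not just the first gap), and bounds the contribution of each mode to $\bP_{x_0,y_0}[X_t\neq Y_t]$ by $8e^{-(i+j)\gl_N t}$ via $\|u_{i,j}\|_\infty\le 2$ and $\|u_{i,j}\|_2^2\ge N^2/4$. Summing the resulting geometric series gives $8e^{-\gl_N t}/(1-e^{-\gl_N t})^2$, which is at most $10e^{-\gl_N t}$ once $e^{-\gl_N t}\le 1/10$; the complementary regime is trivial. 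So what your proposal is missing is not a small technicality but the actual mechanism of the proof: control over the whole subdominant spectrum via the linear growth $\gl_{i,j,N}\ge(i+j)\gl_N$, which cannot be recovered from the first two eigenvalues alone.
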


\begin{pf}
This result is proved in \cite{cfWilson}, Lemma~9 (to which we refer
for the computations), in the discrete case by diagonalization of the
transition matrix of the random-walk
$(X,Y)$ killed when it hits the diagonal. We write $G^*_t$ for the
semi-group of this process.

Let us explain briefly how it adapts to continuous time.
By symmetry it is sufficient to consider $1\leq x< y\leq N$ [hence we
have a killed Markov chain with $N(N-1)/2$ possible states].
For convenience we shift coordinates by $1/2$ so that $x,y\in\{
1/2,\dots, N-1/2\}$.

We remark that the functions $u_{i,j}$, $0\leq i<j<N$, defined by
%
%
\begin{equation}
u_{i,j}(x,y):=\cos\biggl(\frac{i \pi x}{n} \biggr) \cos\biggl(
\frac
{j \pi y}{n} \biggr) - \cos\biggl(\frac{i \pi y}{n} \biggr) \cos
\biggl(
\frac{j \pi x}{n} \biggr),
\end{equation}
form an orthogonal basis of eigenfunctions for the generator of the
killed random walk (see \cite{cfWilson}), with respective eigenvalues
$-\lambda_{i,j,N}$ where
\begin{eqnarray*}
\lambda_{i,j,N}&:=& 2 \bigl[ \bigl(1-\cos(i\pi/N) \bigr)+ \bigl
(1-\cos(j
\pi/N) \bigr) \bigr]\\
&\geq&(i+j) 2\bigl(1-\cos(\pi/N)\bigr).
\end{eqnarray*}
We furthermore have
\[
\llVert u_{i,j} \rrVert_2^2=
N^2(1+\mathbf{1}_{i=0})/4\geq N^2/4.
\]

Hence by decomposition of $G^*_t$ on the basis of eigenfunction, we have
%
%
\begin{eqnarray}
\mathbf{P} _{x_0,y_0}[X_t\ne Y_t]&=&
\sum_{1\leq x<y\leq N-1/2} G^*_t \bigl((x_0,y_0),(x,y)
\bigr)
\nonumber
\\
&=& \sum_{0\leq i<j< N} \sum_{1/2 \leq x<y\leq N-1/2}
\frac
{u_{i,j}(x_0,y_0)u_{i,j}(x_0,y_0)}{\llVert u_{i,j} \rrVert
_2^2}e^{-\lambda
_{i,j,N}t}
\nonumber
\\[-8pt]
\\[-8pt]
&\leq&8 \sum_{0\leq i<j< N} e^{-(i+j)\lambda_{N}t} \leq8 \sum
_{i=0}^\infty\sum
_{j=1}^\infty e^{-(i+j)\lambda_{N}t}
\nonumber
\\
&=& \frac{ 8 e^{-\lambda_{N}t}}{(1-e^{-\lambda_N t})^2},
\nonumber
\end{eqnarray}
where in the first inequality we used $\llVert u_{i,j}\rrVert _\infty
\leq2$.
Then \eqref{cfaz} is trivial if $e^{-\lambda_{N}t}\geq1/10$ and is a
consequence of the above inequality when
$e^{-\lambda_{N}t}\leq1/10$.\qed\qed\noqed
\end{pf}\noqed\end{pf*}

\section{Lower bound for the mixing times for the exclusion
process}\label{lbexp}

In this section we prove that if $\min(k(N),N-k(N))\to\infty$, then
for all $\varepsilon\in(0,1)$ and $\delta>0$, for $N$ large enough,
%
%
\begin{eqnarray}
\label{cromas} d^{N,k} \biggl( \frac{1}{2\pi^2}N^2
\log\min(k,N-k) (1-\delta) \biggr)&\geq& \varepsilon,
\nonumber
\\[-8pt]
\\[-8pt]
d^{N,k}_{S} \biggl(\frac{1}{\pi^2}N^2 \log
\min(k,N-k) (1-\delta) \biggr)&\geq&\varepsilon.
\nonumber
\end{eqnarray}
We consider for simplicity that $k\leq N/2$, the result for $k>N/2$
follows by symmetry.
A proof of the first inequality is in fact already present in \cite
{cfWilson}, but we present an alternative short proof at the end of
the section for the sake of completeness.

To prove the second inequality, we need the following assumption:
\[
\lim_{N\to\infty} \frac{\log\log k}{\log N}=\infty.
\]
This is mainly for technical reasons, and we believe that the result
holds with greater generality.

\subsection{For the separation distance}

As we are looking for a lower bound on $d^{N,k}_{S}(t)$, it is
sufficient to have a lower bound for $d_S(P^{\wedge}_t,\mu)$,
even though we cannot prove that the separation distance is maximized
when starting from an extremal condition.
From Proposition~\ref{censorsep}, $P^{\wedge}_t$ is an increasing
probability (because the Dirac measure on $\wedge$ is an increasing
probability), and we have
%
%
\begin{equation}
d_S \bigl(P^{\wedge}_t,\mu\bigr)=1-
\frac{P^{\wedge}_t(\vee)}{\mu(\vee)}.
\end{equation}
Hence what we have to prove is that for $t=t_1:=\frac{1-\delta}{\pi
^2}N^2 \log k$,
%
%
\begin{equation}
\label{groomy} \frac{P^{\wedge}_{t_1}(\vee)}{\mu(\vee)}\geq
1-\varepsilon.
\end{equation}
By reversibility of the dynamics, one has for all $\eta$, $\eta'$ and
all $t\geq0$,
\[
P_t^{\eta'}(\eta)=P^{\eta}_t \bigl(
\eta' \bigr).
\]
Combining this with the semi-group property, we have
%
%
\begin{equation}
\label{extreme} P^{\wedge}_t(\vee)=\sum
_{\eta\in\Omega_{N,k}} P^{\wedge
}_{t/2}(\eta)
P^{\vee}_{t/2}(\eta).
\end{equation}

Now, we partition $\Omega_{N,k}$ into two sets,
%
%
\begin{eqnarray}
\Omega_1&:=& \bigl\{ \eta\in\Omega_{N,k} \vert
\eta\bigl(\lceil N/2 \rceil\bigr)\geq0 \bigr\} ,
\nonumber
\\[-8pt]
\\[-8pt]
\Omega_2&:=& \bigl\{ \eta\in\Omega_{N,k} \vert\eta\bigl(
\lceil N/2 \rceil\bigr) < 0 \bigr\},
\nonumber
\end{eqnarray}
and bound from above the contribution of each in \eqref{extreme}.

Note that both $\Omega_1$ and $\Omega_2$ are distributive lattices (both
sets are stable under the composition laws $\operatorname{\mathbf
{min}}$ and $\operatorname{\mathbf{max}}$), and
thus the FKG inequality \eqref{FKG} is also valid when $\mu$ is
replaced by
$\mu(\cdot\vert \Omega_i)$.
Hence we have
%
%
\begin{eqnarray}
\sum_{\eta\in\Omega_{1}} P^{\wedge}_{t/2}(
\eta) P^{\vee
}_{t/2}(\eta)&=& \pmatrix{N\cr k}\mu(
\Omega_1) \sum_{\eta\in\Omega_1} \mu(\eta\vert
\Omega_1) P^{\wedge
}_{t/2}(\eta) P^{\vee}_{t/2}(
\eta)
\nonumber
\\
&\leq&\pmatrix{N\cr k}\mu(\Omega_1) \biggl(\sum
_{\eta\in\Omega_1} \mu(\eta\vert\Omega_1)
P^{\wedge}_{t/2}(\eta) \biggr)
\nonumber
\\[-8pt]
\\[-8pt]
&&{}\times\biggl(\sum_{\eta\in
\Omega
_1} \mu(\eta\vert
\Omega_1) P^{\wedge}_{t/2}(\eta) P^{\vee
}_{t/2}(
\eta) \biggr)
\nonumber
\\
&=& \pmatrix{N\cr k}^{-1}\mu(\Omega_1)^{-1}P^{\wedge}_{t/2}(
\Omega_1)P^{\vee
}_{t/2}(\Omega_1).
\nonumber
\end{eqnarray}
Similarly,
%
%
\begin{equation}
\sum_{\eta\in\Omega_{2}} P^{\wedge}_{t/2}(\eta)
P^{\vee
}_{t/2}(\eta)\leq\pmatrix{N\cr k}^{-1}\mu(
\Omega_2)^{-1}P^{\wedge}_{t/2}(\Omega
_2)P^{\vee
}_{t/2}(\Omega_2).
\end{equation}
Thus from \eqref{extreme} we have
%
%
\begin{equation}
\frac{P^{\wedge}_t(\vee)}{\mu(\vee)}\leq\mu(\Omega
_1)^{-1}P^{\wedge
}_{t/2}(
\Omega_1)P^{\vee}_{t/2}(\Omega_1)+ \mu(
\Omega_2)^{-1}P^{\wedge}_{t/2}(
\Omega_2)P^{\vee}_{t/2}(\Omega_2).
\end{equation}

As $\eta_{\lceil N/2\rceil}$ satisfies the central limit theorem, we have
\[
\lim_{N\to\infty} \mu(\Omega_i)=1/2,\qquad i=1,2,
\]
and hence, for all $N$ sufficiently large,
\[
\frac{P^{\wedge}_t(\vee)}{\mu(\vee)}\leq3 \bigl(P^{\vee
}_{t/2}(
\Omega_1)+P^{\wedge}_{t/2}(\Omega_2)
\bigr).
\]
Hence to prove \eqref{groomy}, we just need to show that $P^{\vee
}_{t/2}(\Omega_1)$ and $P^{\wedge}_{t/2}(\Omega_2)$ are small.

%
\begin{lemma}\label{lemecaumilieu}
Set
\[
t_0:=\frac{1}{2\pi^2}N^2\log k(1-\delta).
\]
Then if
\[
\lim_{N\to\infty} \frac{\log k}{\log\log N}=\infty,
\]
we have
%
%
\begin{eqnarray}
\lim_{N\to\infty}P^{\vee}_{t_0}(
\Omega_1)&=&0,
\nonumber
\\[-8pt]
\\[-8pt]
\lim_{N\to\infty}P^{\wedge}_{t_0}(
\Omega_2)&=&0.
\nonumber
\end{eqnarray}
\end{lemma}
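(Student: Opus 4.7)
The strategy is a second-moment / Chebyshev argument. Set $x_* := \lceil N/2 \rceil$ and
\[
Y_t^\xi := \sum_{z=1}^{x_*}\gamma_t^\xi(z) = \eta_t^\xi(x_*) + \frac{x_* k}{N},
\]
so that the events in question read $\gO_1=\{Y_{t_0}^\vee \ge x_* k/N\}$ and $\gO_2=\{Y_{t_0}^\wedge < x_* k/N\}$; both will be handled by estimating the mean and variance of $Y_{t_0}^\xi$.

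\medskip

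I would first control the mean using the heat equation. The function $m_t^\xi(x) := \bbE[\eta_t^\xi(x)]$ satisfies the discrete heat equation $\partial_t m = \Delta_x m$ with zero Dirichlet boundary and initial datum $\xi$ (this is the computation preceding Lemma \ref{boudk}, transferred to $\eta_t$ via the identification $\eta_t =_{\mathcal L} \tilde\sigma_t(\cdot,k)$). For $\xi = \wedge$, Lemma \ref{boudk2} gives directly
\[
m_{t_0}^\wedge(x_*) \;\ge\; \frac{k}{\pi}\sin(\pi x_*/N)\,e^{-\gl_N t_0} \;\ge\; c\,k^{(1+\delta)/2},
\]
since $\gl_N t_0 = \tfrac{1-\delta}{2}\log k\,(1+o(1))$ and $\sin(\pi x_*/N) \ge 1/2$. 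For $\xi = \vee$, I would expand $\vee$ in the Dirichlet sine basis: integrating by parts, the $j$-th Fourier coefficient of $\vee$ equals $\tfrac{2N(-1)^j\sin(\pi j k/N)}{(\pi j)^2}$, so $|a_1| \ge 4k/\pi^2$ (negative sign) while the higher coefficients decay like $1/j^2$. Combined with the additional damping $e^{-(\gl_j-\gl_1)t_0}\le k^{-4(1-\delta)}$ for $j \ge 3$, this yields $-m_{t_0}^\vee(x_*) \ge c\,k^{(1+\delta)/2}$ as well.

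\medskip

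For the variance, I would use that the symmetric simple exclusion process preserves non-positivity of pair covariances (a classical fact; see Liggett's book on interacting particle systems or Pemantle's survey on negative correlation). Since the Dirac masses $\delta_\vee$ and $\delta_\wedge$ trivially have $\mathrm{Cov}(\gamma_0(x),\gamma_0(y)) = 0$ for all $x\ne y$, we obtain $\mathrm{Cov}(\gamma_t^\xi(x),\gamma_t^\xi(y))\le 0$ for all $t\ge 0$, $x\ne y$ and $\xi\in\{\vee,\wedge\}$. Hence, using that each $\gamma_t^\xi(z)$ is Bernoulli,
\[
\Var(Y_t^\xi) \;\le\; \sum_{z\le x_*} \Var(\gamma_t^\xi(z)) \;\le\; \sum_{z\le x_*}\bbE[\gamma_t^\xi(z)] \;\le\; k.
\]
Chebyshev's inequality then gives
\[
P_{t_0}^\vee(\gO_1) \;\le\; \frac{\Var(Y_{t_0}^\vee)}{m_{t_0}^\vee(x_*)^2} \;\le\; \frac{k}{c^2\,k^{1+\delta}} \;=\; O(k^{-\delta}) \;\longrightarrow\; 0,
\]
and the analogous bound holds for $P_{t_0}^\wedge(\gO_2)$.

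\medskip

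The main obstacle is the clean justification of the negative correlation property for SSEP on a finite segment started from a deterministic configuration: while well known, this result is most often stated on $\bbZ^d$, and one must locate or adapt a version suitable to the interval. An alternative route, should this prove delicate, is to use the self-duality of SSEP with stirring random walks to compute $\bbE[\gamma_t^\xi(x)\gamma_t^\xi(y)]$ explicitly and verify the sign of the covariance; the remaining Fourier tail estimates on $\vee$ and the Chebyshev bookkeeping are routine.
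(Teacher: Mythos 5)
Your strategy (mean lower bound from the discrete heat equation, variance upper bound, Chebyshev) is the same one the paper uses; the key difference is the method for bounding the variance of $\eta^\xi_{t_0}(\lceil N/2\rceil)$. The paper (Lemma \ref{lemecaumilieubis}) bounds it by $Ck\log N$ via a martingale argument: it writes $\eta_{t_0}(\lceil N/2\rceil)$ as the terminal value of an explicit martingale built from the Fourier coefficients $a_i(\eta_t)$ (each of which has a fixed exponential drift), and then bounds the martingale bracket by controlling the jump sizes and the total jump rate $\le 2k$. You instead invoke negative correlation of occupation variables for SSEP started from a deterministic configuration, giving $\mathrm{Var}(\sum_{z\le x_*}\gamma_t^\xi(z))\le\sum_{z\le x_*}\mathrm{Var}(\gamma_t^\xi(z))\le k$. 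Both routes are valid. Your bound is sharper by a factor $\log N$ — and in fact this sharpening would eliminate the standing hypothesis $\lim_N\frac{\log k}{\log\log N}=\infty$, which in the paper is needed precisely to absorb that stray $\log N$ — so your argument, if completed, would strengthen the lemma. The trade-off is self-containment: the paper's martingale computation is elementary and does not leave the paper, whereas your approach leans on the negative-correlation (or negative-association / strong-Rayleigh) property of symmetric exclusion. That property is indeed true on any finite graph with a deterministic (hence trivially strong Rayleigh) initial condition — Borcea--Br\"and\'en--Liggett (2009) is the cleanest reference, or the stirring-walk duality argument you sketch — but it is a nontrivial external input, and you correctly flag that it needs to be sourced or reproved in the finite-segment setting. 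One small remark: you treat the $\vee$ starting point by a Fourier-tail computation; the paper disposes of that case by the particle--hole symmetry $\eta\mapsto -\eta(N-\cdot)$, which maps $\wedge\leftrightarrow\vee$ and $\gO_1\leftrightarrow\gO_2$, and is slightly shorter than redoing the expansion.
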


We only prove the second limit, the first being exactly the same.

\subsection{Proof of Lemma \texorpdfstring{\protect\ref{lemecaumilieu}}{7.1}}

We want to prove that when one starts the dynamics from the maximal
path $\wedge$, w.h.p. $\eta_{t_0}(\lceil N/2\rceil)\geq0$.
To do so we compute the expectation and variance of $\eta_{t_0}(\lceil
N/2\rceil)$.

%
\begin{lemma}\label{lemecaumilieubis}
We can find a constant $C$ such that for all $N$ large enough,
%
%
\begin{eqnarray}
P^{\wedge}_{t_0} \bigl( \eta\bigl( \lceil N/2
\rceil\bigr) \bigr)&\geq& C^{-1} k^{(1+\delta)/2},
\nonumber
\\[-8pt]
\\[-8pt]
\operatorname{Var}_{P^{\wedge}_{t_0}} \bigl( \eta\bigl(\lceil N/2
\rceil\bigr) \bigr)&\leq& C k
\log N.
\nonumber
\end{eqnarray}
\end{lemma}

Then Lemma~\ref{lemecaumilieu} is easily deduced by using Chebytchev's
inequality.

\begin{pf*}{Proof of Lemma~\ref{lemecaumilieubis}}
The inequality for the expectation is obtained by using \eqref{group3}
[recall that $\eta_t$ has the same law that
$\widetilde\sigma_t(\cdot, k)$].

To control the variance, we use an idea similar to that in \cite{cfLST}, Section~7, with the use of martingale and Fourier coefficients.
The Fourier decomposition of $\eta$ on the basis of eigenfunctions
$(u_i)_{i=1}^{N-1}$ given by \eqref{eigenu}, implies that for all
$y\in\{0,\dots, N\}$,
%
%
\begin{equation}
\label{fourierdeco} \eta(y)= \frac{2} N \sum_{i=1}^{N-1}
\sum_{x=1}^{N-1} \eta(x)\sin\biggl(
\frac{i\pi x}{N} \biggr) \sin\biggl(\frac{i\pi y}{N} \biggr).
\end{equation}
The reader can check that
\[
\eta\mapsto\sum_{x=1}^{N-1} \eta(x)\sin
\biggl(\frac{i\pi
x}{N} \biggr)
\]
are eigenfunctions of the generator of the Markov chain \eqref
{crading} with eigenvalue $-\lambda_{N,i}$; recall \eqref{lambdani}.
For this reason, for each $i$, the process
\[
e^{\lambda_{N,i}t}\sum_{x=0}^N
\eta_t(x)\sin\biggl(\frac{i\pi
x}{N} \biggr)= e^{\lambda_{N,i}t}
a_{i}(\eta_t),
\]
where
\[
a_{i}(\eta):=\sum_{x=0}^N
\eta(x)\sin\biggl(\frac{i\pi x}{N} \biggr)
\]
is a martingale (in $t$).

We consider the following martingale which is a linear combination of
the above:
%
%
\begin{equation}
M_t:= \frac{2}{N}\sum_{i=1}^{N-1}
e^{\lambda_{N,i}(t-t_0)}\sin\bigl(\pi i \lceil N/2 \rceil/N \bigr)a_{i}(
\eta_t).
\end{equation}
As a consequence of \eqref{fourierdeco}, it satisfies
\[
M_{t_0}=\eta_{t_0} \bigl( \lceil N/2 \rceil\bigr).
\]

To control the variance of $M_{t_0}$, we prove a uniform upper bound on
the martingale bracket and use the fact that,
as the initial variance is zero, we have
%
%
\begin{equation}
\operatorname{Var}\bigl[M^2_{t_0} \bigr]=\mathbb{E} \bigl[ \langle M
\rangle^2_{t_0} \bigr].
\end{equation}

It is easy to obtain an upper bound on the bracket of the martingale.
As each transition changes the value of $M$ by at most
\[
\frac{2} N \sum_{i=1}^N
e^{\lambda_{N,i}(t-t_0)}
\]
and the transitions occur with a rate at most $2k$ (there are $k$
particles which can perform at most two transitions, each with rate
$1$), we have
%
%
\begin{eqnarray}
\langle M \rangle^2_{t_0}&\leq&\int_0^{t_0}
\frac
{8k}{N^2} \Biggl(\sum_{i=1}^{N-1}
e^{\lambda_{N,i}(t-t_0)} \Biggr)^2 \,\mathrm{d}t
\nonumber
\\[-8pt]
\\[-8pt]
&\leq&\int_{-\infty}^0 \frac{8k}{N^2} \Biggl(
\sum_{i=1}^{N-1} e^{\lambda
_{N,i}t}
\Biggr)^2 \,\mathrm{d}t = \frac{8k}{N^2} \sum
_{i,j=1}^{N-1} \frac{1}{\lambda_{N,i}+\lambda_{j,N}}.
\nonumber
\end{eqnarray}
One can find a constant $C$ such that for all $i$ and $N$,
\[
\lambda_{N,i}\geq\frac{i^2}{C N^2}.
\]
We have
%
%
\begin{equation}
\operatorname{Var}\bigl[M^2_{t_0} \bigr]\leq8C k \sum
_{i,j=1}^{N-1}\frac{1} {i^2+ j^2}\leq C' k
\log N.
\end{equation}
\upqed
\end{pf*}

\subsection{A lower bound on the total variation mixing time}

Let us now give a short proof for the first inequality of \eqref{cromas}.
Set
\[
a_1(\eta):=\sum_{x=1}^N \sin
\biggl(\frac{x}{\pi N} \biggr)\eta(x).
\]
As in the previous section, for any value of $t$,
\[
M_s:= e^{(s-t) \lambda_N}a_1(\eta_t)
\]
is a martingale. Note that
\[
M_t=a_1(\eta_t).
\]
If $\eta_0=\wedge$, there exists a constant $c$ such that for all
$s\geq0$, for all $N$ and $k$,
%
%
\begin{equation}
\label{croumphite} \mathbb{E} [M_s]=e^{-t\lambda_N}a_1(
\wedge)\geq c e^{-t\lambda_N}Nk.
\end{equation}
We control the variance of $M_t$ as follows:
%
%
\begin{equation}
\quad \operatorname{Var}\bigl[a \bigl(\eta^\wedge_t \bigr) \bigr
]=\operatorname{Var}
\bigl[M^2_{t} \bigr]=\mathbb{E} \bigl[ \langle M
\rangle^2_{t_0} \bigr]\leq Ck \int_0^t
e^{2(s-t)\lambda_N}\,\mathrm{d}s \leq Ck N^2.
\end{equation}
Taking $t=\infty$, we obtain that at equilibrium we have
\[
\operatorname{Var}_{\mu} \bigl(a_1(\eta) \bigr)\leq Ck N^2
\quad\mbox{and}\quad\mu\bigl(a_1(\eta) \bigr)=0.
\]

These bounds on the variance and expectation show that at time\vspace*{2pt}
$t=\frac{1}{2\pi^2}N^2\*\log k(1-\delta)$, the expectation of
$a(\eta
_1)$ is much larger than its typical fluctuations
so that its distribution cannot be close to equilibrium.

More precisely,
if $\mathbf{P} $ is a coupling of $P_t^{\wedge}$ (variable $\eta^1$) and
$\mu$ (variable $\eta^2$), we have
(by Chebytchev's inequality)
%
%
\begin{eqnarray}
\mathbf{P} \bigl[\eta^1=\eta^2 \bigr]&\leq&\mathbf{P}
\bigl[a_1 \bigl(\eta^1 \bigr)-a_1 \bigl(\eta
^2 \bigr)\leq0 \bigr] \leq\frac{\operatorname{Var}_{\mathbf{P} }
(a_1(\eta^1)-a_1(\eta^2))}{(\mathbf
{E} [a_1(\eta
^1)-a_1(\eta^2)])^2}
\nonumber
\\
&\leq&2\frac{\operatorname{Var}_{P^{\wedge_t}}(a_1(\eta))+
\operatorname{Var}_{\mu
}(a_1(\eta
))}{(P^{\wedge}_t[a_1(\eta)])^2}
\\
&\leq&\frac{CkN^2}{
e^{-2\lambda_N t }N^2k^2}=Ck^{-1}e^{2\lambda_N t}.
\nonumber
\end{eqnarray}

Applying this inequality for $t=\frac{1}{2\pi^2}N^2\log k(1-\delta)$
we deduce that the first line of \eqref{cromas} holds.

\section{The upper bound on the mixing time for the exclusion
process}\label{excluproc}

As the exclusion process is obtained by projecting the AT shuffle; its
mixing time is smaller.
Hence from Theorem~\ref{mainres} we already have, for any sequence $k(N)$,
%
%
\begin{equation}
\limsup_{N\to\infty} \frac{2\pi^2 T_{\mathrm
{mix}}^{N,k}(\varepsilon)}{N^2\log N}\leq1.
\end{equation}
This is sufficient to prove the upper bound on the mixing time of
Theorem~\ref{mainressep} when $k=N/2$, but this is not the case
when the number of particles is strictly smaller than
$N^{1-o(1)}$.

Contrary to the AT shuffle, the distance to equilibrium for the
exclusion process depends on the initial conditions,
and there
is a priori no reason for it to be maximized when the initial
conditions are chosen to be either $\vee$ or $\wedge$
(the extremal elements). However, most of the arguments involving
motonicity can be used only for these two cases, and
thus one must think of another strategy.

Assume that we have a coupling of the Markov chain trajectories $\eta
^\xi_t$ starting from all initial possible conditions $\xi\in\Omega
_{N,k}$, which preserves the order, or in other words satisfies \eqref
{orderpreservrrr}.
The coupling derived from the graphical construction of Section~\ref
{graphix} is an example of such coupling, but we will use another one
for our proof.
We call
$\mathbb{P} $ the law of the coupling.

Using the triangular inequality, we have for any $\xi$,
%
%
\begin{eqnarray}
\label{gramsci2} \bigl\llVert P_t^\xi- \pi\bigr\rrVert
_{\mathrm{TV}}&=& \bigl\llVert P_t^\xi-
P^{\pi}_t \bigr\rrVert_{\mathrm{TV}}
\nonumber
\\[-8pt]
\\[-8pt]
&\leq&\frac{1} {\llvert \Omega_{N,k}\rrvert } \sum_{\xi' \in
\Omega_{N,k}} \bigl\llVert
P_t^\xi-P^{\xi'}_t \bigr\rrVert
_{\mathrm{TV}}\leq\max_{\xi'} \bigl\llVert
P_t^\xi- P^{\xi'}_t \bigr\rrVert
_{\mathrm{TV}}.
\nonumber
\end{eqnarray}

As $\mathbb{P} $ provides a coupling between $P_t^\xi$ and $P^{\xi'}_t$,
using the characterization of the total variation distance given in
Lemma~\ref{caravar}, we have
%
%
\begin{equation}
\label{gramsci} \bigl\llVert P_t^\xi-
P^{\xi'}_t \bigr\rrVert\leq\mathbb{P} \bigl[
\eta^\xi_t\ne\eta_t^{\xi
'} \bigr]
\leq\mathbb{P} \bigl[\eta^\vee_t\ne\eta_t^\wedge
\bigr],
\end{equation}
where the last inequality is a consequence of \eqref{orderpreservrrr}:
both $\eta^\xi_t$ and $\eta_t^{\xi'}$
are squeezed between $\eta^\vee_t$ and $\eta_t^\wedge$, and thus
they must be equal once the dynamics starting from the extremal
initial conditions have coalesced.

This reasoning was used in \cite{cfWilson} to obtain an upper bound
on the
mixing-time using the coupling derived from the
graphical construction of Section~\ref{graphix}.
To have an improvement on Wilson's bound, one must necessarily use
another coupling. Indeed
the estimate he obtained
for the merging time of $\eta^\vee_t$ with $\eta_t^\wedge$, for the
coupling obtained with the graphical construction,
is tight; see \cite{cfWilson}, Table~1, coupling column.

\subsection{An alternative graphical construction for the exclusion
process}\label{glopal}

Let us present an alternative coupling that can be constructed for the
exclusion process.
The underlying idea is to find a construction that maximizes the
fluctuation of the area between
$\eta^\vee_t$ and $\eta_t^\wedge$ in order to make them coalesce
faster. To maximize the fluctuation, we want to make the corner-flips of
both trajectories as independent as possible.

The construction corresponds exactly to the graphical construction
for the zero-temperature Ising model in a $k\times(N-k)$ rectangle
with mixed boundary condition; see, for example, \cite{cfLST}, Section~2.3 and Figure~3, for a description of the model.

Set
\begin{eqnarray*}
\Theta&:=& \big\{ (x,z) \vert x\in\{1,\dots,N-1\}
\mbox{ and}
\\
&&\hphantom{\big\{}z\in\bigl\{ \max(0,x-N+k)-xk/N,\min(x,k)-xk/N \bigr\} \big\},
\end{eqnarray*}
and set $\mathcal{T} ^\uparrow$ and $\mathcal{T} ^\downarrow$ to be
two independent
rate-one clock processes
indexed by $\Theta$ [$\mathcal{T} ^\uparrow_{(x,z)}$ and
$\mathcal{T} ^\downarrow_{(x,z)}$ are two independent Poisson
processes of
intensity one of each $(x,z) \in\Theta$].

If $\mathcal{T} ^\uparrow_{(x,z)}$ rings at time $t$ then:
\begin{itemize}
\item if $\eta^\xi_{t^-}(x)=z$ and $\eta^\xi_{t^-}$ has a local
minimum at $x$,
then $\eta^\xi_{t}(x)=z+1$, and the other coordinate remains unchanged;
\item if these conditions are not satisfied, we do nothing.
\end{itemize}
If $\mathcal{T} ^\uparrow_{(x,z)}$ rings at time $t$, then:
\begin{itemize}
\item if $\eta^\xi_{t^-}(x)=z$ and $\eta^\xi_{t^-}$ has a local
maximum at $x$,
then $\eta^\xi_{t}(x)=z-1$, and the other coordinate remains unchanged;
\item if these conditions are not satisfied, we do nothing.
\end{itemize}

The reader can check that the dynamics we obtain is the exclusion
process and that it provides a coupling
satisfying \eqref{orderpreservrrr}. We call $\mathbb{P} $ the law of this
construction.

We want to prove the following:

%
\begin{proposition}\label{wegonosqueez}
Given $\delta>0$, set
\[
t_1:=\frac{N^2}{2\pi^2}\log k(1+\delta).
\]
Then for any $\varepsilon>0$, we have
\[
\mathbb{P} \bigl[\eta^\vee_t\ne\eta_t^\wedge
\bigr]\leq\varepsilon.
\]
\end{proposition}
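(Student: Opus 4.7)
The strategy is to bound the coalescence time under the alternative coupling $\bbP$. Set $D_t(x) := \eta^\wedge_t(x) - \eta^\vee_t(x)$; by the order-preservation property \eqref{orderpreservrrr}, $D_t \ge 0$ pointwise, and $\eta^\wedge_t = \eta^\vee_t$ if and only if $D_t \equiv 0$. Thus it suffices to show $\bbP[D_{t_1} \not\equiv 0] \le \gep$.

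First I would control the mean of $D_t$ via the heat-equation estimate of Lemma \ref{boudk2}. Both $\bbE[\eta^\wedge_t(x)]$ and $\bbE[\eta^\vee_t(x)]$ solve the discrete heat equation with Dirichlet boundary conditions, so projecting on the first eigenfunction $a_1(\eta) := \sum_x \sin(\pi x/N)\eta(x)$ (with eigenvalue $-\gl_N$) gives
\begin{equation*}
\bbE[a_1(D_t)] \;=\; e^{-\gl_N t}\bigl(a_1(\wedge) - a_1(\vee)\bigr) \;=\; O\bigl(Nk\, e^{-\gl_N t}\bigr),
\end{equation*}
so at $t = t_1$ we have $\bbE[a_1(D_{t_1})] = O(N k^{(1-\delta)/2})$. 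Since $a_1(D)$ can be as small as $\sin(\pi/N) = O(1/N)$ whenever $D\not\equiv 0$, a direct Markov bound is insufficient.

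The decisive ingredient is the independence structure of the alternative coupling: at every coordinate $x$ where $D_t(x) > 0$, the flips affecting $\eta^\wedge$ and $\eta^\vee$ are driven by disjoint Poisson clocks (indexed by distinct heights $z$), and the two paths share a clock at $x$ only when they coincide there. I plan to exploit this by computing the predictable bracket of the non-negative martingale $M_t := e^{\gl_N t} a_1(D_t)$: each independent flip at position $x$ contributes $\sin^2(\pi x/N)\, e^{2\gl_N t}\, dt$ to $d\med{M}_t$, yielding $\bbE[\med{M}_{t_1}] = O(N^3 e^{2\gl_N t_1}) = O(N^3 k^{1+\delta})$. Combined with Doob's inequality and the mean estimate, this should give a quantitative control of the coalescence time.

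The main obstacle lies in turning these first- and second-moment estimates into a sharp probability bound at time $t_1$, since the ratio $\mathrm{Var}/\mathrm{Mean}^2$ is not small in the regime where $k$ is polynomial in $N$. The cleanest way around this is likely a two-phase scheme: during a first phase of length $N^2\log k/(2\pi^2)$, the heat-equation estimate shrinks $\bbE[a_1(D_t)]$ down to order $N$, after which $D_t$ is a random non-negative path of controlled total mass; during a second phase of length $\delta N^2\log k /(2\pi^2)$, the independence of the clocks drives the remaining mass to zero. Since the present coupling coincides with the zero-temperature stochastic Ising dynamics in a $k\times(N-k)$ rectangle with mixed boundary condition (Section \ref{glopal}), one may alternatively invoke or adapt sharp interface-coalescence results such as those developed in \cite{cf:LST}.
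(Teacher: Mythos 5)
Your high-level plan (first a phase driven by the heat equation to make the gap between the extremal paths small, then a second phase in which the independence of the two clock processes creates fluctuations that kill the remaining gap, using that the coupling is a zero-temperature Ising dynamics as in \cite{cf:LST}) is the same two-phase intuition as the paper's proof, but the proposal has a genuine gap at exactly the crucial step, and you in fact half-acknowledge it.

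The central difficulty is not the first- and second-moment estimates on $a_1(D_t)$ — you compute those correctly and correctly observe that $\mathrm{Var}/\mathrm{Mean}^2$ does not become small in the relevant regime. The decisive missing ingredient is a \emph{lower} bound on the rate at which the area $A(t)=\sum_x D_t(x)$ fluctuates during the second phase. Saying that ``the independence of the clocks drives the remaining mass to zero'' presumes that there are many active corners (sites at which the two interfaces can flip independently); if the two paths are locally flat, or if the gap consists of only a handful of long thin bubbles, the flip rate may be far too small for the area to hit zero within time $\delta N^2\log k/(2\pi^2)$. The paper addresses this by: (i) proving that both $P^{\wedge}_{t_0}$ and $P^{\vee}_{t_0}$ are close to equilibrium (Proposition~\ref{topandbottom}, whose proof itself uses the censoring machinery of Section~\ref{proofmres}); (ii) showing that under $\mu$ the paths are typically not too high and have no long affine stretches (the set $\cH$ in Lemma~\ref{yapadeh}), so that outside $\cH$ the number of active corners is bounded below by roughly $A(t)/(\sqrt k(\log k)^2 N/k)$; and (iii) coupling $A(t)$ (which is a nonnegative integer-valued supermartingale with $\pm 1$ jumps) to a time-changed simple random walk and using the lower bound on the flip rate to show the random-walk time runs fast enough. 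None of the three pieces appears in your proposal. Working with the projection $a_1(D_t)$ rather than the area $A(t)$ also obscures the supermartingale/random-walk structure that makes the comparison tractable: $A(t)$ jumps by exactly $\pm1$ at a rate $u(t)+d(t)$ with $d-u\in\{0,1,2\}$, whereas $a_1(D_t)$ jumps by a position-dependent amount, so the clean time-change to a symmetric walk is lost. Finally, invoking or adapting the interface-coalescence results of \cite{cf:LST} is a reference to an idea rather than a proof; in particular, the regularity input needed there (absence of long flat stretches, height bounds) must still be established here, which is precisely what Lemma~\ref{yapadeh} does.
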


The upper bound on the mixing time can then be deduced from \eqref
{gramsci} and \eqref{gramsci2}.

Our strategy to prove the result is the following:
it follows from Lemma~\ref{boudk} that\vspace*{1pt} after time $t_0:=\frac
{N^2}{2\pi^2}\log k(1+\delta/2)$,
we have
\[
A(t):=\sum_{x=1}^{N-1} \bigl(
\eta^\wedge_t- \eta_t^\vee\bigr)
(x) \ll k^{1/2} N,
\]
or in other words, that the area between the two curves is much smaller
than the typical fluctuation
of $\sum_{x=1}^N \eta(x)$ under the equilibrium measure $\mu$.

Then we want to use the extra time $t_1-t_0=\frac{N^2}{2\pi^2}\log
k(\delta/2)$
to make the two paths coalesce by comparing the evolution of the area
$A(t)$ (which is a supermartingale)
to a symmetric random walk
with a time change.

To perform this last step, we need to know that
both $P^{\vee}_{t_0}$ and $P^{\wedge}_{t_0}$ are close to equilibrium.
This fact is proved following the ideas developed in Section~\ref{proofmres}.
Then we use the fact that typically, in the interval $[t_0,t_1]$ both
$\eta^\wedge_t$ and $\eta^\vee_t$ present
a lot of flippable corners, and this allows us to produce enough
fluctuation for the two to coalesce with
large probability.

\subsection{Reaching equilibrium from the extremal conditions}

As a preliminary work we need to prove that $\eta^\vee_t$ and
$\eta^\wedge_t$ have reached their equilibrium distribution a bit
before $t_1$.

%
\begin{proposition}\label{topandbottom}
Set
\[
t_0:=\frac{N^2}{2\pi^2}\log k(1+\delta/2).
\]
We have for all $\varepsilon>0$, for all $N$ large enough,
%
%
\begin{eqnarray}
\lim_{N\to\infty} \bigl\llVert P_{t_0}^\wedge-
\mu\bigr\rrVert_{\mathrm{TV}}&=&0,
\nonumber
\\[-8pt]
\\[-8pt]
\lim_{N\to\infty} \bigl\llVert P_{t_0}^\vee-\mu
\bigr\rrVert_{\mathrm{TV}}&=&0.
\nonumber
\end{eqnarray}
\end{proposition}

The proof of this statement has a structure similar to that of the
proof of \eqref{uopbound} (the similar result for the AT shuffle) but
is slightly simpler.
One needs only two steps instead of three to make $\eta_t$ close to
equilibrium. Note that by symmetry, we only need to consider the
initial condition $\wedge$.

Let us quickly sketch the proof.
We set $K:=\lceil1/\delta\rceil$.

We consider a dynamic $\eta_t$ starting from the initial condition
$\wedge$ with the following censoring scheme:
\begin{itemize}
\item up to time $t_2:=\frac{N^2}{2\pi^2}\log k(1+\delta/4)$, we run
the dynamics without censoring;
\item in the time interval $[t_2,t_0]$, the updates at coordinate $x_i$
[recall \eqref{defxi}] are censored.
\end{itemize}

Let $\nu_t$ be the law of $\eta_t$ under this dynamics. According to
Proposition~\ref{censor}, we have
\[
\bigl\llVert P_t^\wedge-\mu\bigr\rrVert_{\mathrm{TV}}
\leq\llVert\nu_t-\mu\rrVert_{\mathrm{TV}},
\]
and hence it is sufficient to prove that $\nu_{t_0}$ is close to equilibrium,
or that for every $\varepsilon>0$ , if $N$ is large enough,
%
%
\begin{equation}
\llVert\nu_{t_0}-\mu\rrVert_{\mathrm{TV}}\leq\varepsilon.
\end{equation}
We prove that at time $t_2$ the skeleton $\bar\eta$ has come close to
its equilibrium distribution and use the time interval $[t_2,t_0]$
to put all the segments between skeleton points to equilibrium.

%
\begin{proposition}\label{skeletontime}
 We have for all $\varepsilon>0$, for all $N$ large enough,
%
%
\begin{equation}
\llVert\bar\nu_{t_2}-\bar\mu\rrVert_{\mathrm{TV}}\leq
\varepsilon/2.
\end{equation}
\end{proposition}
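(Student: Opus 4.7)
The plan is to mirror the proof of Proposition \ref{groom2}, which in the present setting becomes even more direct since $\gO_{N,k}$ is a genuine distributive lattice and the skeleton is only one dimensional. As $\delta_\wedge$ is concentrated on the maximal element of $\gO_{N,k}$ it is an increasing probability; since no censoring is performed on $[0,t_2]$, Proposition \ref{censorsep} shows that $\nu_{t_2}$ is increasing on $\gO_{N,k}$, and Proposition \ref{projecmonsep} then guarantees that both $\bar\nu_{t_2}/\bar\mu$ and each one-dimensional marginal $\bar\nu_{t_2,i}/\bar\mu_i$ are increasing. On the quantitative side, Lemma \ref{boudk2} gives
\begin{equation}
\nu_{t_2}(\eta(x))\le 4k\, e^{-\gl_N t_2}= 4\,k^{\frac12-\delta/8}(1+o(1))
\end{equation}
uniformly in $x$, and hence, setting $v(\bar \eta):=\sum_{i=1}^{K-1}\bar\eta(i)$, $\nu_{t_2}(v(\bar\eta))=o(\sqrt{k})$. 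This is the key quantitative input: the mean area under the skeleton is already much smaller than the equilibrium fluctuation scale $\sqrt{k}$.

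The bulk of the work is then to prove the exclusion-process analogue of Lemma \ref{areaisal}: if $\nu$ is an increasing probability on $\gO_{N,k}$ and $\nu(v(\bar\eta))\le \sqrt{k}\,c(K,\gep)$ for a suitably small $c(K,\gep)>0$, then $\|\bar\nu-\bar\mu\|_{TV}\le\gep/2$. Its three ingredients are direct counterparts of those used for the AT-shuffle. First, a Gaussian local CLT for $\bar\eta(i)/\sqrt{k}$, valid uniformly in $i=1,\dots,K-1$, obtained by Stirling from the hypergeometric formula for $\mu(\eta(x_i)=z)$; this plays the role of Lemma \ref{graincon} and lets me define levels $\cA_i=\{\bar\eta(i)\ge A\sqrt{k}\}$, $\cA=\bigcap_i\cA_i$, $\cB=\bigcap_i\cA_i^c$ with $\mu(\cA)\ge\delta_1$ and $\mu(\cB)\ge 1-\delta_2$, $\delta_2$ arbitrarily small. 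Second, the stochastic ordering $\mu(\,\cdot\,|\,\cA)\succeq\mu(\,\cdot\,|\,\cB^c)$: since $\cA$ and $\cB^c$ are both increasing with $\cA\subset\cB^c$ and $\bmin(\cA,\cB^c)\subset\cB^c$ by direct inspection, this is now an immediate consequence of Holley's inequality \eqref{Holley}, which neatly bypasses the delicate argument needed for Lemma \ref{cromostic} on $S_N$. Third, the FKG inequality \eqref{FKG} applied to the marginals and to the union-bound function $\theta=\sum_i \ind_{\cA_i}-\ind_\cA$, giving, together with the projected monotonicity of $\bar\nu_i/\bar\mu_i$, the lower bound $\nu(v(\bar\eta))\ge \sqrt{k}\,A\,(\nu-\mu)(\cA)$.

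With these three ingredients in hand the final dichotomy is verbatim that of Lemma \ref{areaisal}. Either $\nu(\cA)\ge(1+\alpha)\mu(\cA)$, in which case the bound above forces $\nu(v(\bar\eta))\ge \delta_1\alpha A\sqrt{k}$, contradicting the input if $c(K,\gep)$ is chosen small enough; or $\nu(\cA)\le(1+\alpha)\mu(\cA)$, in which case the Holley domination, the monotonicity of $\bar\nu/\bar\mu$ on $\cB$ and $\mu(\cB)\ge 1-\delta_2$ together yield $\|\bar\nu-\bar\mu\|_{TV}\le 2\alpha+\delta_2$, which can be made smaller than $\gep/2$. Applied to $\nu=\nu_{t_2}$ this gives Proposition \ref{skeletontime}. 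The only step requiring genuine attention is the Gaussian CLT for the joint law of the $K-1$ skeleton values (one has to ensure the asymptotic covariance matrix is non-degenerate, so that each $\bar\eta(i)$ actually fluctuates on scale $\sqrt{k}$), but in the one-dimensional lattice-path setting this reduces to the classical hypergeometric local limit theorem; the structural obstacle encountered for the AT-shuffle, namely the absence of a distributive lattice structure on $S_N$, has disappeared here.
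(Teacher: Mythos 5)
Your proposal is correct and follows essentially the same route as the paper: verify that $\nu_{t_2}=P^\wedge_{t_2}$ is increasing via Proposition \ref{censorsep} and \ref{projecmonsep}, bound $\nu_{t_2}(v(\bar\eta))=o(\sqrt{k})$ using Lemma \ref{boudk2}, and then invoke the exclusion analogue of Lemma \ref{areaisal} (which is Lemma \ref{areaisall}, proved exactly as you outline — via the hypergeometric CLT for the skeleton values, the FKG inequality for the union-bound function, and Holley's inequality \eqref{Holley} to get $\mu(\cdot\,|\,\cA)\succeq\mu(\cdot\,|\,\cB^c)$ directly from the distributive lattice structure of $\gO_{N,k}$). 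Your observation that the lattice structure removes the need for the separate, more delicate argument of Lemma \ref{cromostic} matches the remark implicit in the paper's organization.
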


We prove Proposition~\ref{skeletontime} in the next section. Let us
now explain how we prove Proposition~\ref{topandbottom}.

\begin{pf*}{Proof of Proposition~\ref{topandbottom} using
Proposition~\ref{skeletontime}}
Between time $t_2$ and $t_0$, a consequence of the censoring is that
the number of particles
in the interval $(x_{i-1},x_{i}]$ remains constant for every $i\in\{
1,\dots,K\}$.
Hence on the time interval $[t_2,t_0]$, conditionally to $\eta_{t_2}$,
$(\eta_{t})_{t\geq t_2}$ is a product dynamics of $K$ independent
exclusion processes.
We denote the corresponding equilibrium measure by $\mu_{\eta
_{t_2}}$. We have
%
%
\begin{equation}
\mu_{\eta_{t_2}}:=\mu\bigl(\cdot\vert\forall i\in\{1,\dots,K-1\},
\eta(x_i)=\eta_{t_2}(x_i) \bigr).
\end{equation}

We define $k_i(\eta_{t_2})$ to be the number of particles in the
interval $(x_{i-1},x_{i}]$,
\[
k_i:=\eta_{t_2}(x_i)-\eta_{t_2}(x_{i-1})+
\frac{k}{N}(x_i-x_{i-1}).
\]

Using Proposition~\ref{wilson2} and the fact that the total variation
distance between product measures is smaller than the sum of the
total variation distances
of the marginals, we obtain, similar to \eqref{desdree}, that
%
%
\begin{equation}
\bigl\llVert\mathbb{P} [ \eta_{t_0}\in\cdot\vert
\eta_{t_2}]-\mu_{\eta_{t_2}} \bigr\rrVert_{\mathrm{TV}} \leq\sum
_{i=1}^K k_ie^{-\lambda_{\Delta x_i} (t_0-t_2)}.
\end{equation}
Then we use that $k_i\leq k$ for all $i$, and that if $N$ is large enough,
\[
\lambda_{\Delta x_i}=2 \biggl(1-\cos\biggl(\frac{\pi}{\Delta
x_i} \biggr)
\biggr) \geq\frac{\pi^2}{2 (\Delta x_i)^2}\geq\frac{\pi
^2}{3\delta
^2 N^2},
\]
to conclude that
%
%
\begin{equation}
\bigl\llVert\mathbb{P} [ \eta_{t_0}\in\cdot\vert
\eta_{t_2}]-\mu_{\eta_{t_2}} \bigr\rrVert_{\mathrm{TV}}\leq k K
e^{-\log k/(24\delta)}\leq\varepsilon/2.
\end{equation}
Even though the right-hand side above is a random variable, the inequality holds
not only with probability one, but also everywhere.
Using Jensen's inequality after taking the average on the event $\{\bar
\eta_{t_2}=\xi\}$,
we obtain that for every $\xi\in\bar\Omega_{N,k}$,
%
%
\begin{equation}
\label{crooom} \bigl\llVert\nu_{t_0}(\cdot\vert\bar\eta=\xi) -
\mu( \cdot\vert\bar\eta=\xi) \bigr\rrVert_{\mathrm{TV}} \leq
\varepsilon/2.
\end{equation}
Then similar to \eqref{ladecomm} we have
%
%
\begin{eqnarray}
\label{kokok}\llVert\nu_{t_0}-\mu\rrVert_{\mathrm{TV}}&\leq&
\llVert\bar\nu_{t_2}-\bar\mu\rrVert_{\mathrm{TV}}
\nonumber
\\[-8pt]
\\[-8pt]
&&{} +\sum_{\xi\in\Omega_{N,K}} \bar\nu_{t_
0}(\xi) \bigl
\llVert\nu_{t_0}(\cdot\vert\bar\eta=\xi) - \mu(\cdot\vert\bar
\eta=
\xi) \bigr\rrVert_{\mathrm{TV}} \leq\varepsilon,
\nonumber
\end{eqnarray}
where in the last inequality we used \eqref{crooom} and
Proposition~\ref{skeletontime}.
\end{pf*}

\subsection{Proof of Proposition \texorpdfstring{\protect\ref{skeletontime}}{8.3}}

The proof strongly relies on the fact that $\nu_{t_2}=P^\wedge_{t_2}$
is increasing and presents many similarities with the proof of
Proposition~\ref{groom2}.
Set
\[
v(\bar\eta):=\sum_{i=1}^{K-1} \bar\eta(i)
\]
to be the volume below the skeleton of $\eta$.
The idea is to show that once the expected volume $v(\bar\eta_t)$
becomes much smaller than its equilibrium fluctuations (which are of
order $K\sqrt{k}$), then we must be close to equilibrium.

%
\begin{lemma}\label{areaisall}
Let $ \nu$ be a probability measure whose density with respect $\mu$
is increasing.
For every $\varepsilon$, there exists $\delta(K,\varepsilon)$ such
that for $N$
sufficiently large, we have
%
%
\begin{equation}
\label{chrom} \nu\bigl(v(\bar\eta) \bigr)\leq(K-1)\sqrt
{k}\delta\quad
\Rightarrow\quad\llVert\bar\nu- \bar\mu\rrVert\leq\varepsilon/2.
\end{equation}
\end{lemma}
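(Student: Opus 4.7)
The plan is to imitate the proof of Lemma \ref{areaisal}, which adapts cleanly and in fact simplifies in the present setting because $\gO_{N,k}$ is a distributive lattice: the FKG and Holley inequalities \eqref{FKG}--\eqref{Holley} apply directly and no analogue of the technical Lemma \ref{cromostic} is needed. Given $\gep>0$, I would set $\ga:=\gep/12$, choose $A=A(K,\gep)$ large, and introduce
\[
\cA_i := \{\eta \in \gO_{N,k} \mid \bar\eta(i) \ge A\sqrt{k}\}, \qquad \cA := \bigcap_{i=1}^{K-1} \cA_i, \qquad \cB := \Big(\bigcup_{i=1}^{K-1} \cA_i\Big)^{\!c}.
\]
Each $\cA_i$ is an increasing event. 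A local CLT for the hypergeometric distribution of $\sum_{x\le x_i}\gamma(x)$ under $\mu$ shows that $\bar\eta(i)/\sqrt{k}$ is asymptotically centered Gaussian with variance $\tfrac{i}{K}(1-\tfrac{i}{K})\tfrac{N-k}{N}$, bounded above and below uniformly in $i,N$ since $k\le N/2$ and $k\to\infty$. Combined with FKG \eqref{FKG} this yields, for $A$ and $N$ large, constants $\delta_1(K,A)>0$ and $\delta_2(K,A)\le \gep/6$ with $\mu(\cA)\ge \delta_1$ and $\mu(\cB)\ge 1-\delta_2$.

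I would then establish two implications paralleling \eqref{ninjatwist} and \eqref{xencuts}. \emph{First}, assume $\nu(\cA)\ge (1+\ga)\mu(\cA)$. The correlation inequality (Lemma \ref{correlineq}) applied on $\cA_i$ and $\cA_i^c$ to $\bar\eta(i)$ and the increasing density $\bar\nu_i/\bar\mu_i$ (Proposition \ref{projecmonsep}), combined with $\mu(\bar\eta(i))=0$ and with $(\nu-\mu)(\cA_i)\ge 0$ (FKG applied to the increasing function $\ind_{\cA_i}$ and the increasing density $\nu/\mu$) to discard a non-negative remainder, yields $\nu(\bar\eta(i))\ge A\sqrt{k}\,(\nu-\mu)(\cA_i)$. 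The function $\theta:=\sum_i \ind_{\cA_i}-\ind_{\cA}$ is increasing (any jump of $\ind_{\cA}$ from $0$ to $1$ must be accompanied by the simultaneous activation of all $K-1$ indicators $\ind_{\cA_i}$), so FKG applied to $\theta$ and $\nu/\mu$ gives $\sum_i(\nu-\mu)(\cA_i)\ge (\nu-\mu)(\cA)\ge \ga\delta_1$. Summing on $i$,
\[
\nu(v(\bar\eta))=\sum_{i=1}^{K-1}\nu(\bar\eta(i))\ge A\ga\delta_1\sqrt{k}.
\]
\emph{Second}, assume $\nu(\cA)\le (1+\ga)\mu(\cA)$. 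The sets $\cA\subset \cB^c$ are both increasing and the condition $\bmin(\cA,\cB^c)\subset \cB^c$ is immediate coordinatewise: if $\eta\in\cA$ and $\eta'\in \cA_{i_0}\subset \cB^c$, then $\bmin(\eta,\eta')(x_{i_0})\ge A\sqrt{k}$. Holley's inequality \eqref{Holley} applied to the increasing function $\nu/\mu$ then gives $\nu(\cB^c)/\mu(\cB^c)\le \nu(\cA)/\mu(\cA)\le 1+\ga$. Since every $\bar\eta\in\cB$ is pointwise dominated by every $\bar\eta'\in\cA$, the monotonicity of $\bar\nu/\bar\mu$ (Proposition \ref{projecmonsep}) forces $\bar\nu/\bar\mu\le 1+\ga$ throughout $\cB$, and splitting the total variation as an integral on $\cB^c$ and $\cB$ yields
\[
\|\bar\nu-\bar\mu\|_{TV}\le \bar\nu(\cB^c)+\ga\bar\mu(\cB)\le (1+\ga)\delta_2+\ga \le 2\ga+\delta_2 \le \gep/2.
\]

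Setting $\delta:=A\ga\delta_1/(2(K-1))$, the hypothesis $\nu(v(\bar\eta))\le (K-1)\sqrt{k}\,\delta=A\ga\delta_1\sqrt{k}/2$ is incompatible with the bound produced by the first alternative, so the second alternative must hold, yielding \eqref{chrom}. I do not foresee any serious obstacle: every step has a direct counterpart in Lemma \ref{areaisal}, and the only genuinely delicate part of the $S_N$ argument (Lemma \ref{cromostic}, which compensated for $S_N$ failing to be a distributive lattice) is replaced here by a one-line application of Holley's inequality. The only new input is the local CLT for the hypergeometric marginals, which is standard and uses only that $k\to \infty$.
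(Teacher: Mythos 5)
Your proposal is correct and takes essentially the same route as the paper: the same sets $\cA_i$, $\cA$, $\cB$ with the threshold $A\sqrt{k}$, the same dichotomy between $\nu(\cA)\ge(1+\ga)\mu(\cA)$ and its complement, the same use of the correlation inequality plus FKG (on $\theta=\sum_i\ind_{\cA_i}-\ind_{\cA}$) for the first branch, and the same use of Holley's inequality \eqref{Holley} with the increasing density together with Proposition \ref{projecmonsep} for the second branch. The paper likewise uses the lattice structure of $\gO_{N,k}$ to replace the role that Lemma \ref{cromostic} plays in the $S_N$ case, so your observation about why the argument simplifies here matches the paper's reasoning.
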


\begin{pf*}{Proof of Proposition~\ref{skeletontime} from Lemma~\ref
{areaisall}}
According to \eqref{groupeta} for \mbox{$t=t_2$}, we have
%
%
\begin{equation}
\qquad\bar\nu_{t_2} \bigl(v(\bar\eta) \bigr)\leq4 k e^{-\lambda
_N t_2}
=4 ke^{-(1+\delta/2)(1+\cos(\pi/N))N^2 \pi^{-2}\log k}\leq8
k^{1/2-\delta/4}.\vadjust{\goodbreak}
\end{equation}
Hence from Lemma~\ref{areaisall}, if $N$ is large enough [so that the
left-hand side of \eqref{chrom} is satisfied], then
\[
\llVert\bar\nu_t - \bar\mu\rrVert\leq\varepsilon/2.
\]
\upqed
\end{pf*}

Now to prove Lemma~\ref{areaisall}, all we need to do is to introduce
some notation. Given $A>0$,
we set
%
%
\begin{eqnarray}
\mathcal A_i&:=& \{ \eta\vert\bar\eta_i
\geq\sqrt{k}A \},
\nonumber
\\
\mathcal A&:= &\bigcap_{i=1}^{K-1} \mathcal
A_i = \bigl\{ \eta\vert\forall i\in\{1,\dots,K-1\}, \bar
\eta_i \geq\sqrt{k}A \bigr\} ,
\\
\mathcal B&:=& \Biggl(\bigcup_{i=1}^{K-1}
\mathcal A_i \Biggr)^c= \bigl\{ \eta\vert\forall i\in
\{1,\dots,K-1\}, \bar\eta_i < \sqrt{k}A \bigr\}.
\nonumber
\end{eqnarray}
Note that the $\mathcal A_i$s and $\mathcal A$ are increasing events
while $\mathcal B$ is decreasing.
With a slight abuse of notation, we also consider these sets as subsets
of $\bar\Omega_{N,K}$.

%
\begin{lemma}\label{grainconv}
When $N$ tends to infinity,
%
%
\begin{equation}
\biggl(\sqrt{\frac{N}{k(N-k)}}\eta_{x_i} \biggr)_{i\in
[0,K]}
\Rightarrow(Y_i)_{i\in[0,K]},
\end{equation}
where the $Y$ is a Gaussian process whose covariance function is given by
%
%
\begin{equation}
\mathbb{E} [Y_i Y_j \mathbf{1}_{\{i\leq j\}} ]:=
\frac
{i}{K} \biggl(1-\frac{j}{K} \biggr)\mathbf{1}_{i\leq j}.
\end{equation}
Given $\delta\in(0,1/2)$, we choose $A$ large enough, and $\delta
'(\delta)$ satisfying $\lim_{\delta\to0} \delta'=0$, such that for
all $N$
large enough,
%
%
\begin{eqnarray}
\label{ugly} %
\mu(\mathcal A) &\geq& \delta^{K-1}:=
\delta_1,
\nonumber
\\[-8pt]
\\[-8pt]
\mu(\mathcal B)& \geq&1-(K-1)\delta':=1-\delta_2.
\nonumber
\end{eqnarray}
\end{lemma}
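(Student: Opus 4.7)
The plan is to establish the joint convergence to a Brownian bridge by a direct local central limit theorem, and then to derive the two probability estimates from the FKG inequality and a union bound respectively.

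First I would write down explicitly the joint law of $(\eta(x_1),\dots,\eta(x_{K-1}))$ under $\mu$. Since $\mu$ is the uniform measure on particle configurations with exactly $k$ particles, the increments $\eta(x_i)-\eta(x_{i-1})$ (which correspond, up to additive constants, to the number of particles in the interval $(x_{i-1},x_i]$) follow a multivariate hypergeometric distribution with parameters $N,k$ and block-sizes $(\Delta x_i)_{i=1}^K=(x_i-x_{i-1})_{i=1}^K$. Explicitly,
\begin{equation}
\mu\big(\eta(x_i)=z_i,\ \forall i\big)
=\binom{N}{k}^{-1}\prod_{i=1}^K\binom{\Delta x_i}{\Delta z_i+(k/N)\Delta x_i},
\end{equation}
with the convention $\eta(x_0)=\eta(x_K)=0$. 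Using Stirling's formula exactly as in the proof of Lemma \ref{graincon} (but in $K-1$ dimensions) one obtains a local CLT: after rescaling by $\sqrt{N/(k(N-k))}$, this joint law converges to the density of the centered Gaussian vector $(Y_i)_{i=1}^{K-1}$ with covariance
$\bbE[Y_iY_j]=\tfrac{i}{K}(1-\tfrac{j}{K})$ for $i\le j$, which is the covariance of a Brownian bridge on $[0,1]$ sampled at times $i/K$. The assumption that both $k$ and $N-k$ tend to infinity is exactly what makes the Gaussian approximation valid for every fixed $i$, and a standard sum over a compact box of joint values transfers the pointwise local CLT to convergence of probabilities of increasing/decreasing rectangular events.

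Next, given $\delta\in(0,1/2)$, I would pick $A=A(\delta,K)$ so that, for a Gaussian $Z$ of variance $1/4$ (an upper bound on $\Var(Y_i)$ when $k\le N/2$, using $k(N-k)/N\ge k/2$),
\begin{equation}
\bbP\big[Z\ge A\big]=2\delta',
\end{equation}
and simultaneously, since $\Var(Y_i)\ge (1/K)(1-1/K)$, the probability
$\bbP[Y_i\ge A\sqrt{N/(N-k)}\,]$ is at least some $\delta>0$ for every $i\in\{1,\dots,K-1\}$. One checks that $\delta'\to 0$ as $\delta\to 0$ (with $K$ fixed) because $A\to\infty$. By the local CLT of the first step, for $N$ large enough
\begin{equation}
\delta \;\le\; \mu(\mathcal A_i) \;\le\; \delta'\qquad\forall i\in\{1,\dots,K-1\}.
\end{equation}

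Finally, the two bounds in \eqref{ugly} fall out immediately. Each $\mathcal A_i$ is an increasing event, so the FKG inequality \eqref{FKG} applied iteratively yields
\begin{equation}
\mu(\mathcal A)=\mu\!\left(\bigcap_{i=1}^{K-1}\mathcal A_i\right)\ge\prod_{i=1}^{K-1}\mu(\mathcal A_i)\ge\delta^{K-1}=\delta_1.
\end{equation}
For the second estimate, a plain union bound gives
\begin{equation}
\mu(\mathcal B^c)=\mu\!\left(\bigcup_{i=1}^{K-1}\mathcal A_i\right)\le\sum_{i=1}^{K-1}\mu(\mathcal A_i)\le (K-1)\delta'=\delta_2.
\end{equation}
The only slightly delicate step is the multidimensional local CLT with the correct normalization when $k$ and $N-k$ may grow at very different rates; once the variance scale $\sqrt{k(N-k)/N}$ is identified, however, the classical Stirling-based argument in one coordinate extends coordinate by coordinate, and the tightness needed to pass from finite-dimensional convergence to probabilities of the rectangular events $\mathcal A_i$ is automatic.
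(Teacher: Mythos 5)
Your proof is correct and follows essentially the same route as the paper: a Stirling-based local CLT for the (multivariate hypergeometric) joint law of the skeleton increments, then a choice of $A$ calibrated against the extremal variances $\frac1K(1-\frac1K)$ and $\frac14$, followed by FKG for $\mu(\mathcal A)$ and a union bound for $\mu(\mathcal B)$. The only cosmetic slip is that passing from the local CLT to $\mu(\mathcal A_i)\to\bbP[Y_i\ge\cdot]$ is a matter of the limiting law having no atom at the threshold, not of tightness (tightness is invoked in the paper only for the later maximum estimate in Lemma \ref{yapadeh}).
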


\begin{pf}
This is just a simple consequence of the fact that $ (\sqrt{\frac
{N}{k(N-k)}}\*\eta_{\lceil Nx\rceil} )_{x\in[0,1]}$ converges in
law to a Brownian bridge:
the convergence of the finite dimensional marginals can be proved by
using Stirling's formula (which gives a local central limit theorem),
while the proof of tightness (in the topology of the uniform
convergence) is essentially the same as that for the proof of
convergence of random walk to Brownian motion.

The inequalities of \eqref{ugly} are proved similarly to \eqref{lesoufs}.
\end{pf}

\begin{pf*}{Proof of Lemma~\ref{areaisall}}
We are going to prove that for $N$ sufficiently large, the two
following implications hold:
%
%
\begin{equation}
\label{preum} \nu(\mathcal A)\geq(1+\alpha) \mu( \mathcal A) \quad
\Rightarrow
\quad\nu\bigl(v(\bar\eta) \bigr)\geq\delta_1 \alpha A \sqrt{k}
\end{equation}
and
%
%
\begin{equation}
\label{deuz} \nu(\mathcal A)\leq(1+\alpha)\mu(\mathcal{A})\quad
\Rightarrow
\quad\llVert\bar\nu-\bar\mu\rrVert\leq2\alpha+\delta_2.
\end{equation}

We start with \eqref{preum}. Similar to \eqref{froome2}, for all
$i\in\{1,\dots,K-1\}$, we can prove using the correlation inequality
(Lemma~\ref{correlineq} and the fact that
$\bar\nu_i/\bar\mu_i$ is increasing; cf. Proposition~\ref{projecmonsep})
%
%
\begin{equation}
\label{etaij} \nu\bigl(\bar\eta(i) \bigr)\geq(\nu-\mu)
(\mathcal
A_i)\mu( \bar\eta_i \vert\mathcal A_i) +(
\nu-\mu) \bigl(\mathcal A_i ^c \bigr)\mu\bigl( \bar
\eta_i \vert\mathcal A_i^c \bigr).
\end{equation}

As $\nu$ stochastically dominates $\mu$,
$\nu(\mathcal A_i)\geq\mu(\mathcal A_i)$. Furthermore $\mu( \bar
\eta_i \vert \mathcal A_i)\geq A\sqrt{k}$ and
$\mu( \bar\eta_i \vert \mathcal A_i^c)\leq0$, and hence \eqref
{etaij} implies
%
%
\begin{equation}
\nu(\bar\eta_i)\geq(\nu-\mu) (\mathcal A_i) A \sqrt{k}.
\end{equation}
Summing over $i$ we get
%
%
\begin{equation}
\label{volume} \nu\bigl(v(\bar\eta) \bigr)\geq\sum
_{i=1}^{K-1}( \nu-\mu) (\mathcal A_i) A
\sqrt{k}.
\end{equation}
Then we remark that
\[
\Theta\dvtx\eta\mapsto\sum_{i=1}^K
\mathbf{1}_{\mathcal A_i}(\eta)-\mathbf{1} _\mathcal A(\eta)
\]
is an increasing function, and FKG inequality \eqref{FKG} applied to
$\Theta$ and $\nu/\mu$ gives
%
%
\begin{equation}
\label{primoous} \sum_{i=1}^{K-1}(\nu-\mu) (
\mathcal A_i)\geq(\nu-\mu) (\mathcal A).
\end{equation}
Combining \eqref{volume} with \eqref{primoous} and \eqref{ugly}, we
obtain \eqref{preum}.

For \eqref{deuz} we note that, similar to \eqref{graf},
if
$\bar\nu(\mathcal{A})\leq(1+\alpha)\bar\mu( \mathcal A) $ we can prove,
using the fact that $\frac{\bar\nu}{\bar\mu}$ is an increasing function,
%
%
\begin{equation}
\label{graf2} \forall\bar\eta\in\mathcal{B} ,\qquad\frac{\bar
\nu}{\bar\mu
}(\bar
\eta)\leq\frac{\bar\nu(\mathcal{A})}{\bar\mu(\mathcal{A})}\leq
1+\alpha.
\end{equation}
Now note that if $\eta\in\mathcal A$ and $\eta' \in\mathcal B^c$,
then $\operatorname{\mathbf{min}}(\eta, \eta') \in\mathcal B^c$,
and hence from \eqref{Holley}
we have
%
%
\begin{equation}
\label{graf3} \frac{\nu}{\mu} \bigl(\mathcal B^c \bigr)= \mu
\biggl( \frac{\nu}{\mu} \Big\vert\mathcal{B} ^c \biggr) \leq\mu
\biggl( \frac{\nu}{\mu} \Big\vert\mathcal{A} \biggr)= \frac{\nu
}{\mu}(\mathcal
A)\leq1+\alpha.
\end{equation}

Then combining \eqref{graf2} and \eqref{graf3}, we have
%
%
\begin{eqnarray}
\llVert\bar\nu-\bar\mu\rrVert&=& \int_{\mathcal B^c} \biggl(
\frac{\bar\nu}{\bar\mu}(\bar\eta)-1 \biggr)_+ \bar\mu(\mathrm
{d}\bar\eta) +\int
_{\mathcal B} \biggl(\frac{\bar\nu}{\bar\mu}(\bar\eta)-1 \biggr
)_+\bar\mu
(\mathrm{d}\bar\eta)
\nonumber
\\[-8pt]
\\[-8pt]
&\leq&\nu\bigl(\mathcal B^c \bigr)+\alpha\nu(\mathcal B) \leq
\alpha+(1+\alpha)\delta_2.
\nonumber
\end{eqnarray}
\upqed
\end{pf*}

\subsection{Coupling the top and the bottom in a Markovian manner:
Proof of Lemma~\texorpdfstring{\protect\ref{wegonosqueez}}{8.1}}

The idea of the proof is to say that after time $t_0$,
the area between the two curves shrinks to $0$ in a time of order $N^2$.
This statement cannot be proved only by computing the expectation of
the area,
and one must try to control its fluctuations.

Recall that we denote by
\[
A(t):=\sum^N_{x=0} \bigl(
\eta^{\wedge}_{t-}-\eta^{\vee}_{t}(x) \bigr)
\]
the area between the two curves.

Our strategy is to couple $A(t)$ together with a symmetric random walk.
To do this we need to introduce some notation and an alternative way to
build the dynamics.
We say that $x$ is an active coordinate [and write $x\in C(t)$] if
\[
\exists y\in\{ x-1,x,x+1\},\qquad\eta^\wedge_t(y)>
\eta^{\vee}_t(y)
\]
and that $(x,z)$ is an active point for $\eta^\wedge_t$ (or $\eta
^{\vee}$)
if $x$ is active and $\eta^\wedge_t(x)=z$ (or $\eta^{\vee}$)
corresponds to a local extremum.

%
\begin{figure}

\includegraphics{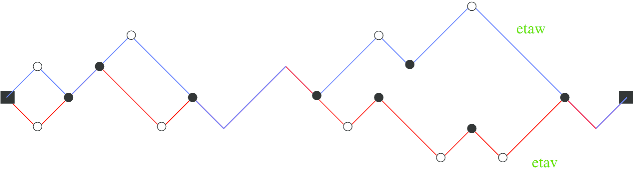}

\caption{Graphical representation of the dynamics
$(\eta^\wedge_t,\eta^\vee_t)$.
The path $\eta^\wedge_t$ is represented in blue and $\eta^\vee_t$
in red.
The active points are represented by circles: full circles for points
in $U(t)$ and void circles for points in $D(t)$. Squares represent the
fixed ends of the lattice paths.
The area between the two paths is made of three bubbles.}
\label{bubbles}
\end{figure}

Among active points, in the following, we specify those that allow an
increase of the area and those that allow the area to decrease:
%
%
\begin{eqnarray}
U(t)&:=& \bigl\{ (x,z) \vert x \in C(t), \eta_t^{\wedge}(x)=z
\mbox{ is a local minimum} \bigr\}
\nonumber
\\
&&{} \cup\bigl\{ (x,z) \vert x \in C(t), \eta_t^{\vee}(x)=z
\mbox{ is a local maximum} \bigr\},
\nonumber
\\
[-8pt]
\\[-8pt]
D(t)&:=& \bigl\{ (x,z) \vert x \in C(t), \eta_t^{\vee}(x)=z
\mbox{ is a local minimum} \bigr\}
\nonumber
\\
&& {}\cup\bigl\{ (x,z) \vert x \in C(t), \eta_t^{\wedge}(x)=z
\mbox{ is a local maximum} \bigr\}.
\nonumber
\end{eqnarray}

We refer to Figure~\ref{bubbles} for a graphical representation of
$U(t)$ and $D(t)$.
We denote by $u(t)$ and $d(t)$ the respective cardinals of $U(t)$ and $D(t)$.
They are the rates at which $A(t)$ increase and decrease respectively.
The reader can check that
\[
(d-u) (t)\in\{0,1,2\},
\]
and hence that $A(t)$ is a supermartingale.

Given a sequence of i.i.d. exponentials $(e_n)_{n\geq0}$ and a
Bernoulli sequence of parameters $1/2$, $(V_n)_{n\geq0}$,
we can reconstruct the dynamics $(\eta_t^{\wedge},\eta_t^{\vee
})_{t\geq t_0}$ (note that we start from time $t_0$ instead of $0$)
as follows:
\begin{enumerate}[$\bullet$]
\item[$\bullet$] The updates of nonactive coordinates [for which
$(\eta^{\wedge
},\eta^{\vee})$ are moving together]
are performed with appropriate rate independently of $e$ and $V$; note that
these updates do not change the value of
$U$ and $D$.
\item[$\bullet$] The updates of active coordinates are performed
using $e$ and $V$
in the following manner.
After the $(n-1)$th update of an active coordinate (which occurred say
at time $t$), we wait a time $e_n/(u(t)+d(t))$ [at time $t_0$ we wait a
time $e_1/(u(t_0)+v(t_0))$],
and then:
\begin{longlist}[(2)]
\item[(1)] if $V_n=-1$, we choose an active point uniformly at random
in $D(t)$
and flip the corresponding corner in either $\eta^{\wedge}$ or $\eta
^{\vee}$;
\item[(2)] if $V_n=1$, then with probability $\frac{d-u}{d+u}(t)$
we choose a corner of $D(t)$ uniformly at random and flip it,
and with probability $\frac{2u}{d+u}(t)$, we switch a corner of $U(t)$.
\end{longlist}
\end{enumerate}

Note that after finitely many updates of active coordinates, $\eta
^\vee(t)$ and $\eta^\wedge(t)$ merge so that only a finite
number of $(V_n)_{n\geq0}$ is used. We let $\mathcal{N} $ be the last
one which
is used.
We define $W_n$ to be equal to $-1$ if the transition corresponding to
$V_n$ decreases the area and
$+1$ if it increases it. From our construction $W_n\leq V_n$, whenever
$W_n$ is defined.

Let $(\widetilde S(t))_{t\geq0}$ be the random walk starting from $A(t_0)$
whose waiting times are given by $e$,
and increments are given by $W_n$, or in other words,
%
%
\begin{equation}
\label{construw} %
\widetilde S_t= %
\cases{
\displaystyle A(t_0)+\sum_{n=1}^N
W_n &\quad  if $\displaystyle\sum_{n=1}^N
e_n \leq t < \sum_{n=1}^{N+1}
e_n, n\leq\mathcal{N} -1$,
\cr
0 &\quad  if $\displaystyle t\geq\sum
_{n=1}^{\mathcal{N} } e_n$. } %
\end{equation}
This process is just a time changed version of $A(t+t_0)$.
We have
%
%
\begin{equation}
\label{timench} A(t+t_0)=S \biggl(\int_0^t
\bigl(d(s)+u(s) \bigr) \,\mathrm{d}s \biggr).
\end{equation}

We define also a set of stopping times for $\widetilde S$ for $i\geq2$,
%
%
\begin{eqnarray}
\tau_i&:=&\min\bigl\{ t\geq0 \vert\widetilde S(t)\leq
k^{1/2-(i+1)\varepsilon}N \bigr\},
\nonumber
\\[-8pt]
\\[-8pt]
\tau_{\infty}&:=&\min\bigl\{ t\geq0 \vert\widetilde S(t)=0 \bigr
\}.
\nonumber
\end{eqnarray}

%
\begin{lemma}\label{leptitauis}
If $\varepsilon\leq\delta/100$, we have, w.h.p.:
\begin{itemize}[(iii)]
\item[(i)] $\tau_2=0$;
\item[(ii)] for all $ i\in\{2 ,\dots, \lceil1/(2\varepsilon)
\rceil\}$,
\[
\tau_{i+1}-\tau_{i}\leq k^{1-(2i+1)\varepsilon}N^2;
\]
\item[(iii)] $\tau_{\infty}-\tau_{\lceil1/(2\varepsilon) \rceil
+1}\leq N^2$.
\end{itemize}
\end{lemma}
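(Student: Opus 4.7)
The plan is to treat the three claims separately, exploiting the same coupling tool. Because each step satisfies $W_n \le V_n$ by construction, we can couple $\tilde S$ with the symmetric continuous-time simple random walk $S$ having the same exponential waiting times $(e_n)$ and increments $(V_n)$, starting at $S_0 = A(t_0) = \tilde S_0$. This yields $\tilde S_t \le S_t$ pathwise so long as $\tilde S_t > 0$, and since $\tilde S$ is absorbed at $0$, the first hitting time of any lower level by $\tilde S$ is dominated by that for $S$. The whole analysis thus reduces to reflection-principle estimates for a symmetric simple random walk, of the form
$\bbP(\min_{0\le s \le T} S_s > M \mid S_0 = M_0) \le C(M_0 - M)/\sqrt{T}$,
valid whenever $(M_0 - M)/\sqrt T$ is small.

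For (i), order preservation combined with Lemma \ref{boudk2} gives $\bbE[A(t_0)] \le 8Nk\exp(-\lambda_N t_0) \le 8Nk^{1/2-\delta/8}(1+o(1))$. Since $\epsilon \le \delta/100 \ll \delta/8$, Markov's inequality yields $A(t_0) \le k^{1/2-3\epsilon}N$ with high probability, which is exactly the statement $\tau_2 = 0$.

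For (ii), I apply the strong Markov property at $\tau_i$ and use the coupling to bound the time to reach $M_2 := k^{1/2-(i+2)\epsilon}N$ by the hitting time of $M_2$ for $S$ starting at $M_1 := k^{1/2-(i+1)\epsilon}N \ge \tilde S(\tau_i)$. With $T := k^{1-(2i+1)\epsilon}N^2$, a short calculation gives $(M_1-M_2)/\sqrt{T} \le k^{-\epsilon/2}$, so each level transition fails with probability $O(k^{-\epsilon/2}) = o(1)$ as $k \to \infty$. A union bound over the $O(1/\epsilon) = O(1)$ values of $i$ preserves this $o(1)$ bound. For (iii), the identical argument with starting level $\tilde S(\tau_{\lceil 1/(2\epsilon)\rceil+1}) \le k^{-2\epsilon}N$ and $T := N^2$ gives failure probability $\le Ck^{-2\epsilon} = o(1)$.

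The principal obstacle is to set up the pathwise coupling $\tilde S \le S$ rigorously; once noted, it follows directly from $W_n \le V_n$ together with the absorption of $\tilde S$ at $0$, after which the remaining ingredients are classical Gaussian estimates. A minor additional point is that we repeatedly invoke $k \to \infty$, which is guaranteed by the standing hypothesis on $k(N)$ inherited from Proposition \ref{wegonosqueez}.
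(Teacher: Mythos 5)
Your proposal is correct and follows essentially the same route as the paper: item (i) is obtained from the heat-equation bound (Lemma \ref{boudk2}) together with Markov's inequality, and items (ii)--(iii) follow by coupling $\tilde S$ to the symmetric walk built from $(e_n, V_n)$ (using $W_n \le V_n$) and then applying the standard hitting-time estimate for a simple random walk. The paper states the coupling and the hitting-time step very tersely, but your elaboration fills in precisely the computations the paper has in mind.
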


\begin{pf}
Item (i) is a consequence of Proposition~\ref{boudk} applied to $t=t_0$.
The two other items follow from the fact that for each $i$,
$(\widetilde
S_{t+\tau_i}-\widetilde S_{\tau_i})_{t\geq0}$ is dominated by a simple
random walk: the coupling is obtained by replacing $W$ with $V$ in
\eqref{construw}.
Then we just have to use the fact that for a simple random walk $X_t$
on $\mathbb{Z} $ starting from the origin and with jump rate $1$,
\[
\lim_{N \to\infty} \mathbb{P} \bigl[ \inf\bigl\{t \vert
X_t\leq N k^{1/2-(i+1)\varepsilon} \bigr\}\geq
N^2k^{1-(2i+1)\varepsilon}
\bigr]=0.
\]
\upqed
\end{pf}

Now we define
%
%
\begin{eqnarray}
\tau'_i&:=&\min\bigl\{ t\geq0 \vert
A(t+t_0)\leq k^{1/2-(i+1)\varepsilon
}N \bigr\},
\nonumber
\\[-8pt]
\\[-8pt]
\tau'_{\infty}&:=&\min\bigl\{ t\geq0 \vert
A(t+t_0)=0 \bigr\}.
\nonumber
\end{eqnarray}
We have from \eqref{timench},
\[
\tau_{i+1}-\tau_{i}=\int_{\tau'_i}^{\tau'_{i+1}}
(d+u) (t) \,\mathrm{d}t.
\]

We want to use this fact and Lemma~\ref{leptitauis} to show that w.h.p.
$\tau'_{\infty}$ is not too large.
In fact we already have from the last item of Lemma~\ref{leptitauis}
and \eqref{timench} that w.h.p.
%
%
\begin{equation}
\label{linfff} \tau'_{\infty}-\tau'_{\lceil1/(2\varepsilon)
\rceil+1}
\leq N^2
\end{equation}
and $\tau_0=0$.
Hence we only have to consider the increments $\tau'_{i+1}-\tau
'_{i}$, $0\leq i \leq\lceil1/(2\varepsilon) \rceil$.

%
\begin{lemma}\label{ladique}
We have
%
%
\begin{equation}
\lim_{N\to\infty} \mathbb{P} \bigl[ \exists i \in\bigl\{2, \dots,
\bigl\lceil1/(2\varepsilon) \bigr\rceil\bigr\}, \tau'_{i+1}-
\tau'_{i}\geq N^2 \bigr]=0.
\end{equation}
\end{lemma}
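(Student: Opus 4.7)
The plan is to combine the time-change identity with the bound from Lemma~\ref{leptitauis}(ii) and a lower bound on $(d+u)(t)$ coming from the closeness of each marginal to equilibrium on $[t_0,t_1]$. Fix $i\in\{2,\dots,\lceil 1/(2\gep)\rceil\}$ and work on $\{\tau'_{i+1}-\tau'_i\ge N^2\}$. From $\tau_{i+1}-\tau_i=\int_{\tau'_i}^{\tau'_{i+1}}(d+u)(s)\,ds$ and Lemma~\ref{leptitauis}(ii), outside a vanishing-probability event one gets
$$\int_{\tau'_i}^{\tau'_{i+1}} (d+u)(s)\,ds \le k^{1-(2i+1)\gep}N^2.$$
Combined with $\tau'_{i+1}-\tau'_i\ge N^2$, this forces the Lebesgue measure of $\{s\in[\tau'_i,\tau'_{i+1}]:(d+u)(s)\le 2k^{1-(2i+1)\gep}\}$ to be at least $N^2/2$, while on the whole interval $A(s+t_0)>k^{1/2-(i+2)\gep}N$ by the definition of $\tau'_{i+1}$.

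Next I would establish that at any fixed $t\in[0,t_1-t_0]$, with probability $1-o(1)$,
$$(d+u)(t+t_0)\gtrsim k^{1-(i+2)\gep}/\sqrt{\log N}\quad\text{whenever}\quad A(t+t_0)\ge k^{1/2-(i+2)\gep}N.$$
By Proposition~\ref{topandbottom} together with the non-increase of total-variation distance along the semigroup, each marginal $P^\wedge_{t+t_0}$, $P^\vee_{t+t_0}$ is within $o(1)$ of $\mu$. Under $\mu$, Gaussian concentration (the variance of $\eta(x)$ being of order $k$) yields $\max_x|\eta(x)|\le C\sqrt{k\log N}$ with probability $1-o(1)$, and the number of ``$10$'' patterns of $\gamma$ inside any deterministic subinterval of length $w$ concentrates around $wk(N-k)/N^2$. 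Combining these, on the typical event the strict support $S_t=\{x:\eta^\wedge_t(x)>\eta^\vee_t(x)\}$ satisfies $|S_t|\ge A(t+t_0)/(2C\sqrt{k\log N})\gtrsim k^{-(i+2)\gep}N/\sqrt{\log N}$, and the number of local extrema of $\eta^\wedge_t$ lying in $S_t$ (each of which contributes to $d+u$) is at least of order $|S_t|k/N\gtrsim k^{1-(i+2)\gep}/\sqrt{\log N}$.

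A union bound over a polynomial-sized grid of times in $[t_0,t_1]$ (justified by $(d+u)$ being piecewise constant with total jump rate $O(N)$) upgrades this to a uniform statement. Under the assumption $\log k/\log\log N\to\infty$, for $i\ge 2$ one has $k^{(i-1)\gep}\gg\sqrt{\log N}$, so $k^{1-(i+2)\gep}/\sqrt{\log N}\gg 2k^{1-(2i+1)\gep}$, contradicting the $N^2/2$ lower bound on the low-$(d+u)$ set obtained in the first step. Therefore $\bbP[\tau'_{i+1}-\tau'_i\ge N^2]\to 0$, and a final union bound over the $O(1/\gep)$ values of $i$ concludes.

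The delicate step will be the typical lower bound on the number of corners in the random active region $S_t$: the joint law of $(\eta^\wedge_t,\eta^\vee_t)$ is not a product, and $S_t$ is determined by both coordinates. The argument must carefully disentangle the deterministic geometric content (large area plus bounded maximum height forces wide support) from the probabilistic content (equilibrium concentration of corner counts on intervals of guaranteed minimum width), leveraging only the closeness of each \emph{marginal} to $\mu$ rather than any joint control on $(\eta^\wedge_t,\eta^\vee_t)$.
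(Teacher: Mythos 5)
Your plan correctly identifies the structural ingredients (the time-change identity plus Lemma~\ref{leptitauis}(ii), closeness to equilibrium via Proposition~\ref{topandbottom}, and a geometric corner-counting bound), and the Markov-inequality deduction that $(d+u)$ must be small on a set of measure $\ge N^2/2$ is fine. But there are two genuine gaps.

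First, the union bound over a polynomial grid of times does not go through with the tools available. At each fixed $t$, Proposition~\ref{topandbottom} only gives $\|P^\wedge_{t}-\mu\|_{TV}=o(1)$, so your ``bad at time $t$'' event is only $o(1)$, not $o(1/\text{poly}(N))$. Union-bounding over $\text{poly}(N)$ times (which is what you need, since both the jump times and the times at which $(d+u)$ changes are order $N^3$ in number on an interval of length $O(N^2)$) therefore fails: you lose a polynomial factor you cannot afford. The paper sidesteps this entirely by \emph{not} aiming for a pointwise-in-time statement. Instead it controls the \emph{total time} spent in the bad set $\cH$: stationarity of $\mu$ gives $\mu\bigl(\bbE[\int_0^{T}\ind\{\eta^\xi_s\in\cH\}\,ds]\bigr)=T\mu(\cH)$, Markov's inequality turns this into a high-probability bound on the occupation time, and the proximity of $P^\wedge_{t_0}$ to $\mu$ transfers it to the dynamics started from $\wedge$. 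This ``time-average plus Markov'' device is the correct replacement for your grid union bound, and it uses exactly the $o(1)$ TV estimate you already have.

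Second, you flag but do not resolve the difficulty that $S_t$ is a random union of bubbles. Mere concentration of corner counts on deterministic intervals is not directly applicable to the random set $S_t$: the bubbles have random endpoints and could in principle be aligned with atypically corner-poor windows. The paper resolves this by making the corner-counting step \emph{deterministic}: the bad set $\cH$ explicitly contains all paths that have a long affine piece (of length $2\frac{N}{k}(\log k)^2$), so whenever $\eta^\wedge_t\notin\cH$ every window of that length contains a corner, regardless of where the bubbles fall; combined with the deterministic area/height relation this immediately yields $(d+u)(t)\gtrsim k^{1-(i+2)\gep}/(\log k)^2$ whenever $A(t+t_0)$ is above the $\tau'_{i+1}$ threshold and both paths avoid $\cH$. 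All the probabilistic content is then concentrated in the single statement $\mu(\cH)\to 0$ and its time-averaged consequence. Your proposal would become essentially the paper's proof once you (a) replace the grid union bound by the occupation-time argument and (b) replace ``corner counts concentrate on intervals of guaranteed minimum width'' by the stronger, deterministic ``no long affine piece'' condition.
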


\begin{pf*}{Proof of Proposition~\ref{wegonosqueez}}
By definition, for any $t\geq0$ we have
%
%
\begin{equation}
\mathbb{P} \bigl[ \eta_{t+t_0}^{\wedge}\neq\eta_{t+t_0}^{\vee
}
\bigr]=\mathbb{P} \bigl[\tau'_\infty> t \bigr].
\end{equation}
From Lemma~\ref{ladique} and \eqref{linfff} we have
%
%
\begin{equation}
\lim_{N\to\infty} \mathbb{P} \bigl[\tau'_\infty
\geq\bigl\lceil1/(2\varepsilon) \bigr\rceil N^2 \bigr]=0.
\end{equation}
From this and \eqref{gramsci}, we can deduce that for any $\varepsilon
\leq
\delta/100$, if $N$ is large enough and such that
\[
t_1\leq t_0+ \bigl\lceil1/(2\varepsilon) \bigr\rceil
N^2,
\]
then we have
\[
d^{N,k}(t_1)\leq d^{N,k} \bigl(t_0+
\bigl\lceil1/(2\varepsilon) \bigr\rceil N^2 \bigr)<\varepsilon.
\]
\upqed
\end{pf*}

To prove Lemma~\ref{ladique}, we need a reasonable lower bound on
$(d+u)(t)$ in the interval
$[\tau'_{i}-\tau'_{i+1})$. To this end, we define a good set of
paths, for which there are sufficiently many active points.

We define $\mathcal{H}$ to be the set of bad paths that we wish to avoid
%
%
\begin{eqnarray}
\qquad \mathcal{H}&=&\mathcal{H}(k,N)\nonumber
\\
&:=& \Bigl\{ \eta\in\Omega_{N,k}
\big\vert\max
_{x\in[0,N]} \bigl\llvert\eta(x) \bigr\rrvert\geq\sqrt k\log k
\Bigr\}\nonumber
\\[-8pt]\\[-8pt]
&&{}\cup\biggl\{ \eta\in\Omega_{N,k} \Big\vert\exists x\in\biggl[0,
N- 2
\frac{N} k (\log k)^2 \biggr],\nonumber\\
&&\hspace*{60pt} \eta_{\vert [x,x+2(N/k) (\log k)^2]}
\mbox{ is
affine} \biggr\}.
\nonumber
\end{eqnarray}

We show first that most of the time, after $t_0$, both $\eta^\wedge
_t$ and $\eta^\vee_t$ stay out of $\mathcal{H}$.

%
\begin{lemma}\label{yapadeh}
We have
\[
\lim_{N\to\infty} \mu(\mathcal{H})=0,
\]
and as a consequence,
%
%
\begin{equation}
\label{cromium} \lim_{N\to\infty} \mathbb{P} \biggl[ \biggl(\int
_{t_0}^{t_0+\lceil
1/(2\varepsilon) \rceil N^2} \mathbf{1} \bigl\{\eta^{\wedge}_t
\in\mathcal{H}\mbox{ or } \eta^{\vee}_t \in\mathcal{H}
\bigr\}\,\mathrm{d}t \biggr) \geq N^2/2 \biggr]=0.
\end{equation}
\end{lemma}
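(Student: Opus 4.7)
The plan has three steps: establish the equilibrium bound $\mu(\cH)\to 0$; transfer this bound to the dynamical marginals $P_t^\wedge$ and $P_t^\vee$ using that we are already close to equilibrium at time $t_0$; then conclude the integral bound via Fubini and Markov's inequality.

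For the equilibrium bound, I would split $\cH=\cH_1\cup\cH_2$ according to its two defining events, with $L:=2(N/k)(\log k)^2$. For $\cH_1=\{\max_x|\eta(x)|\ge \sqrt k\log k\}$, under $\mu$ the increment sequence $(\gamma(z))_{z=1}^N$ is a uniformly random $0/1$ string with $k$ ones, so $\eta(x)$ is a centred hypergeometric partial sum. Hoeffding's inequality for sampling without replacement, together with a union bound over $x\in\{1,\dots,N\}$ (or equivalently a reflection-principle argument for lattice bridges), yields $\mu(\cH_1)\le 2N\exp(-c(\log k)^2)$ for some absolute constant $c>0$. For $\cH_2$, observe that $\eta$ is affine on $[x,x+L]$ iff $(\gamma(y))_{y=x+1}^{x+L}$ is constant, either all $0$'s or all $1$'s; the corresponding $\mu$-probabilities are $\binom{N-L}{k-L}/\binom{N}{k}\le (k/N)^L$ and $\binom{N-L}{k}/\binom{N}{k}\le (1-k/N)^L$. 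Summing over the at most $N$ possible starting points (assuming $k\le N/2$ without loss of generality),
$$\mu(\cH_2)\le N\big((k/N)^L+(1-k/N)^L\big)\le N\big(2^{-L}+e^{-kL/N}\big)\le N\big(2^{-L}+e^{-2(\log k)^2}\big),$$
and both terms vanish as $N\to\infty$ in the regime in which Proposition \ref{topandbottom} has been established.

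For the dynamical transfer, Proposition \ref{topandbottom} gives $\|P_{t_0}^\wedge-\mu\|_{TV}\to 0$ and $\|P_{t_0}^\vee-\mu\|_{TV}\to 0$. Since total-variation distance to $\mu$ is non-increasing under the Markov semigroup, for every $t\ge t_0$ and every $\xi\in\{\wedge,\vee\}$ we have $\bbP(\eta_t^\xi\in\cH)\le \mu(\cH)+\|P_{t_0}^\xi-\mu\|_{TV}=:\alpha_N\to 0$. Setting $T:=\lceil 1/(2\gep)\rceil N^2$ and applying Fubini together with a union bound gives
$$\bbE\left[\int_{t_0}^{t_0+T}\ind_{\{\eta^\wedge_t\in\cH\text{ or }\eta^\vee_t\in\cH\}}\dd t\right]\le 2\alpha_N T=o(N^2),$$
and Markov's inequality then yields $\bbP[\,\cdot\, \ge N^2/2]\le 4\alpha_N T/N^2=O(\alpha_N/\gep)=o(1)$, which is \eqref{cromium}.

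The main obstacle is the control of $\mu(\cH_2)$ in the sparse regime where $k$ is much smaller than $N$: the enhancement $L=2(N/k)(\log k)^2$, with its factor $N/k$, is precisely what is needed to make both $(k/N)^L$ and $(1-k/N)^L$ small enough to overcome the factor $N$ from the union bound. Tracking this carefully and uniformly across the different asymptotic regimes of $k(N)$ is the most delicate book-keeping, although conceptually the argument is a straightforward concentration estimate followed by a standard contraction-plus-Fubini argument.
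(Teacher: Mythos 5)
Your overall structure—first bound $\mu(\cH)$, then transfer to $P^\xi_{t_0}$ via Proposition~\ref{topandbottom}, then Fubini and Markov—matches the paper. Your dynamical transfer is organized slightly differently (you bound $P^\xi_t(\cH)$ pointwise in $t$ using monotonicity of total variation and then integrate, whereas the paper computes the expectation under the stationary chain and transfers the resulting probability bound through one application of $\|P^\wedge_{t_0}-\mu\|_{TV}$), but both routes are valid and essentially interchangeable.

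The genuine gap is in your bound on $\mu(\cH)$: the union bounds over $N$ positions are too lossy for the range of $k$ that the lemma must cover. Recall that in this part of Theorem~\ref{mainressep} (the TV upper bound) the only hypothesis is $\min(k,N-k)\to\infty$, so one must handle $k$ as small as $\log N$ or $\log\log N$. In that regime, $N\exp\bigl(-c(\log k)^2\bigr)\to\infty$, so your claimed estimates for both $\cH_1$ and $\cH_2$ do not vanish. For $\cH_1$ the paper sidesteps the issue entirely by using the weak convergence of the rescaled path to a Brownian Bridge: this is a global statement yielding $\mu\bigl(\max_x|\eta(x)|\ge a_N\sqrt{k}\bigr)\to 0$ for any $a_N\to\infty$, with no $N$ prefactor. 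Your parenthetical ``reflection principle'' remark is actually the right idea, but it is not equivalent to the union bound; only the former survives small $k$. For $\cH_2$ the paper does a union bound not over all $N$ starting points but over a \emph{fixed partition} of $[0,N]$ into $\lfloor k(\log k)^{-2}\rfloor$ disjoint intervals of length $\frac{N}{k}(\log k)^2$, using that any window of length $2\frac{N}{k}(\log k)^2$ must contain at least one partition cell; this reduces the prefactor from $N$ to $k(\log k)^{-2}$, which is enough. You should replace your $N$-term union bounds by these sharper arguments (reflection or tightness for $\cH_1$; sparse-partition union bound for $\cH_2$); the rest of your proof goes through unchanged.
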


\begin{pf}
The fact that
%
%
\begin{equation}
\lim_{N\to\infty}\mu\Bigl( \max_{x\in[0,N]} \bigl
\llvert\eta(x) \bigr\rrvert\geq\sqrt k\log k \Bigr)=0
\end{equation}
follows from the convergence of $ (\sqrt{\frac{N}{k(N-k)}}\eta
_{\lceil Nx\rceil} )_{x\in[0,1]}$ to the Brownian bridge; see
the proof Lemma~\ref{grainconv}.
For the second point it is sufficient to prove that w.h.p., each segment
\[
\biggl[(i-1)\frac{N} k (\log k)^2; i \frac{N} k (
\log k)^2 \biggr],\qquad i \in\bigl\{0,\dots, \bigl\lfloor k (\log
k)^{-2} \bigr\rfloor\bigr\}
\]
contains at least one particle and one empty site.

The probability for a segment of with $l$ sites ($l\leq N-k$) to contain
no particle is equal to
\[
\frac{(N-k)!(N-l)!}{(N-l-k)!N!}\leq\biggl(1-\frac{k}{N} \biggr)^l.
\]
Here $l\geq N k (\log k)^2/2$, and hence the probability is smaller than
$e^{-(\log k)^2/2}$.
As $k\leq N/2$ the probability of having a segment with no empty sites
is smaller than having a segment with no particle, and we can conclude.
Hence by union bound, after summing the probability of the two events
over all the segments, we obtain
%
%
\begin{eqnarray}
&&\mathbb{P} \biggl[ \exists x\in\biggl[0, N- 2\frac{N} k (\log
k)^2 \biggr], \eta_{\vert [x,x+2\sklfrac{N} k (\log k)^2]} \mbox{ is
affine} \biggr]
\nonumber
\\[-8pt]
\\[-8pt]
&&\qquad\leq k ( \log k)^{-2}e^{-(\log k)^2/2}.
\nonumber
\end{eqnarray}

Now let us deduce \eqref{cromium}.
Of course by symmetry it is sufficient to prove that
%
%
\begin{equation}
\label{rcrb} \lim_{N\to\infty} \mathbb{P} \biggl[ \biggl(\int
_{t_0}^{t_0+\lceil
1/(2\varepsilon) \rceil N^2} \mathbf{1} \bigl\{\eta^{\wedge}_t
\in\mathcal{H} \bigr\}\,\mathrm{d}t \biggr) \geq N^2/4 \biggr]=0.
\end{equation}
First, note that as $\mu$ is stable for the dynamics, we have
%
%
\begin{equation}
\mu\biggl(\mathbb{E} \biggl[\int_{0}^{\lceil1/(2\varepsilon)
\rceil N^2}
\mathbf{1} \bigl\{ \eta^{\xi}_t\in\mathcal{H} \bigr\} \,
\mathrm{d}t \biggr] \biggr)=\mu(\mathcal{H}) \bigl\lceil
1/(2\varepsilon) \bigr
\rceil N^2,
\end{equation}
where $\mu$ is the law of $\xi$.
Hence from the first point and the Markov inequality,
we have
%
%
\begin{equation}
\label{rcra} \lim_{N\to\infty} \mu\biggl(\mathbb{P} \biggl[
\biggl(\int_{0}^{\lceil
1/(2\varepsilon) \rceil N^2} \mathbf{1} \bigl\{
\eta^{\xi}_t\in\mathcal{H} \bigr\}\,\mathrm{d}t \biggr)
\geq N^2/4 \biggr] \biggr)=0.
\end{equation}
The quantity we want to estimate is equal (by the Markov property) to
\[
P^{\wedge}_{t_0} \biggl(\mathbb{P} \biggl[ \biggl(\int
_{0}^{\lceil
1/(2\varepsilon)
\rceil N^2} \mathbf{1} \bigl\{\eta^{\xi}_t
\in\mathcal{H} \bigr\}\,\mathrm{d}t \biggr) \geq N^2/4 \biggr]
\biggr)
\]
and hence
%
%
\begin{eqnarray}
\label{rcrc} && \biggl\llvert\mu\biggl(\mathbb{P} \biggl[ \biggl
(\int
_{0}^{\lceil
1/(2\varepsilon)
\rceil N^2} \mathbf{1} \bigl\{\eta^{\xi}_t
\in\mathcal{H} \bigr\} \,\mathrm{d}t \biggr) \geq N^2/4 \biggr]
\biggr)
\nonumber
\\
&&\quad{} - \mathbb{P} \biggl[ \biggl(\int_{t_0}^{t_0+\lceil
1/(2\varepsilon)
\rceil N^2}
\mathbf{1} \bigl\{\eta^{\wedge}_t\in\mathcal{H} \bigr\} \,
\mathrm{d}t \biggr) \geq N^2/4 \biggr] \biggr\rrvert
\\
&&\qquad\leq\bigl\llVert\mu-P^{\wedge}_{t_0} \bigr\rrVert
_{\mathrm{TV}}.
\nonumber
\end{eqnarray}
By Proposition~\ref{topandbottom} the right-hand side above converges to
zero, and hence \eqref{rcrb} is a consequence of \eqref{rcra} and
\eqref{rcrc}.
\end{pf}

The following result shows that indeed if both $\eta^{\wedge}_t$ and
$\eta^{\vee}_t$ lie outside of $\mathcal{H}$, then there are many
active sites.

%
\begin{lemma}
For all $ i\in\{2 ,\dots, \lceil1/(2\varepsilon)
\rceil\}$,
if $t< \tau'_{i+1}$, $\eta^{\wedge}_t\notin\mathcal{H}$ and $\eta
^{\vee
}_t \notin\mathcal{H}$,
%
%
\begin{equation}
(d+u) (t)\geq\frac{k^{1-(i+2)\varepsilon}}{8(\log k)^{2}}.
\end{equation}
\end{lemma}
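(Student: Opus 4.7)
The plan is to prove the estimate by combining an area--length bound for the strip between the two extremal trajectories with a corner-counting argument inside that strip. From $\eta^\wedge_t,\eta^\vee_t \notin \cH$ I would extract two ingredients: (a) $0 \leq \eta^\wedge_t(x) - \eta^\vee_t(x) \leq 2\sqrt{k}\log k$ for every $x$, and (b) neither $\eta^\wedge_t$ nor $\eta^\vee_t$ is affine on any subinterval of length $L := 2N/k(\log k)^2$, so each such subinterval contains a local extremum of each path.

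First, $t < \tau'_{i+1}$ gives by definition $A(t) \geq k^{1/2-(i+2)\gep} N$. Setting $D_t := \{ x : \eta^\wedge_t(x) > \eta^\vee_t(x)\} \subseteq C(t)$ and combining the area lower bound with the height bound (a) yields
\[
|D_t| \;\geq\; \frac{A(t)}{2\sqrt{k}\log k} \;\geq\; \frac{k^{-(i+2)\gep} N}{2 \log k}.
\]
Then I would decompose $D_t$ into its maximal connected subintervals (\emph{bubbles}) $[a_j, b_j]$ of lengths $\ell_j$ and split the count along their size. For large bubbles with $\ell_j \geq L$, one may partition $[a_j, b_j]$ into $\lfloor \ell_j/L \rfloor \geq \ell_j/(2L)$ disjoint length-$L$ pieces, each hosting (by (b)) a local extremum of $\eta^\wedge_t$; since those extrema lie in $D_t \subset C(t)$, they contribute directly to $(u+d)(t)$.

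For small bubbles with $\ell_j < L$ the previous argument is empty, but a short case analysis at the boundary point $a_j-1$ rescues one corner per bubble. Indeed, $\eta^\wedge_t(a_j-1) = \eta^\vee_t(a_j-1)$ while at $a_j$ the paths must record differing increments (one a particle, the other an empty site), so comparing the common slope at $a_j-1$ to the forced opposite-sign slopes at $a_j$ shows that at least one of $\eta^\wedge_t$, $\eta^\vee_t$ admits a local extremum at $a_j - 1 \in C(t)$, contributing either a local min of $\eta^\wedge_t$ or a local max of $\eta^\vee_t$ to $U(t)$. Summing, $(u+d)(t) \geq |D_t|_{\mathrm{large}}/(2L) + n_{\mathrm{small}}$, and since $n_{\mathrm{small}} \geq |D_t|_{\mathrm{small}}/L$, we get $(u+d)(t) \geq |D_t|/(2L)$. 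Substituting the bounds on $|D_t|$ and $L$ then produces the announced lower bound (up to absolute constants and powers of $\log k$ that match the claim).

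The main obstacle is precisely the small-bubble regime: these can carry a substantial fraction of the total area yet host no interior corner of $\eta^\wedge_t$, so without the slope analysis at their endpoints their contribution to $(u+d)(t)$ would be lost entirely, and the overall estimate would collapse. Once this case is dispatched, the remainder is a direct calculation combining the area/height bound with the no-long-affine-interval property.
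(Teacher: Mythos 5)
Your proposal follows essentially the same strategy as the paper: use the area lower bound coming from $t<\tau'_{i+1}$ together with the height bound $\max_x(\eta^\wedge_t-\eta^\vee_t)\le 2\sqrt k\log k$ to bound $|D_t|$ from below, decompose $D_t$ into bubbles, subdivide long bubbles into pieces of length $L$ so that the no-long-affine-stretch clause of $\cH$ forces an active corner in each piece, and then rescue the short bubbles separately. The one place you genuinely deviate is the short-bubble rescue. The paper observes that inside each bubble $\eta^\wedge_t$ must have a local maximum and $\eta^\vee_t$ a local minimum (the two paths separate at the left end and reconverge at the right end), so there are at least two active points \emph{interior} to the bubble, both lying in $D(t)$; you instead run a slope comparison at the exterior boundary point $a_j-1$ to produce a single corner in $U(t)$. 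Your argument is correct but slightly less robust: it tacitly requires $a_j-1\geq 1$ (and $a_j-2\geq 0$ so that the two common increments on either side of $a_j-1$ exist), so a bubble whose left end is $a_j=1$ would need a separate treatment (symmetrize to $b_j+1$, or fall back to the interior-corner observation), whereas the paper's interior argument handles boundary bubbles with no extra case. One more small point: tracking the length $L=2N(\log k)^2/k$ honestly through the computation, as you do, yields $(\log k)^3$ in the denominator rather than the $(\log k)^2$ stated in the lemma; you flag this discrepancy explicitly, and indeed the paper's own proof uses the inconsistent interval length $2N\log k/k$ at this step, so your caveat about powers of $\log k$ is warranted rather than a defect of your argument.
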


\begin{pf}
If $\eta^{\wedge}_t\notin\mathcal{H}$ and $\eta^{\vee}_t \notin
\mathcal{H}$, then
\[
\max_{x\in[0,N]} \bigl(\eta_t^{\wedge}-
\eta^{\vee}_t \bigr)\leq2\sqrt k \log k.
\]
If $t < \tau'_{i+1}$, we also have
\[
A(t)\geq k^{1/2-(i+2)\varepsilon}N.
\]
Combining these two inequalities we have
%
%
\begin{equation}
\label{alldabubl} \# \bigl\{ x\in\{1,\dots,N-1\} \vert\eta
^{\wedge}(x)>
\eta^{\vee
}_t(x) \bigr\}\geq N k^{-(i+2)\varepsilon}(2\log
k)^{-1}.
\end{equation}
Now the set of coordinates where $\eta^{\wedge}_t$ and $\eta^{\vee
}_t$ differ can be decomposed into maximal connected components
(for the usual graph structure on $\mathbb{Z} $), each component
corresponding to a ``bubble'' between $\eta^{\wedge}_t$ and $\eta
^{\vee}_t$; see Figure~\ref{bubbles}.

If $\{x_1, \dots, x_2\}$ corresponds to a bubble, then all the corners
of $\eta^{\wedge}_t$ and $\eta^{\vee}_t$
in the interval $\{x_1, \dots, x_2\}$ are active points. In particular
we have at least two active points per bubble.
We also need to show that long bubbles (i.e., those associated to long
intervals) have a lot of active points.

Note that the interval $\{x_1, \dots, x_2\}$ can be split into
\[
\biggl\lfloor\frac{(x_1-x_2)k}{2 N \log k} \biggr\rfloor
\]
intervals of length $\frac{2 N \log k}{k}$ or longer (not that it
might be zero).
If $\eta^{\wedge}_t\notin\mathcal{H}$, then each of these intervals will
contain at least one active coordinate.
Hence if $\eta_t^\wedge\notin\mathcal{H}$, the number of active
points in a
bubble in the interval $\{x_1, \dots, x_2\}$ is always larger than
\[
\frac{(x_1-x_2)k}{4 N \log k}.
\]

Note that the number has been chosen so that the statement is also
valid when $\lfloor\frac{(x_1-x_2)k}{2 N \log k}\rfloor=0$.

Summing over all bubbles and using \eqref{alldabubl}, we obtain the
following lower bound for the total number of active sites:
\[
(d-u) (t)\geq\frac{k^{1-(i+2)\varepsilon}}{8(\log k)^{2}}.
\]
\upqed
\end{pf}

\begin{pf*}{Proof of Lemma~\ref{ladique}}
It is sufficient that to prove that for each $i \in\{2, \dots, \lceil
1/(2\varepsilon) \rceil\}$, the probability of the event
\[
\mathcal{A}_i:= \bigl\{ \tau'_{i+1}-
\tau'_{i}\geq N^2 \bigr\} \cap\bigl\{
\forall j <i, \tau'_{i+1}-\tau'_{i}<
N^2 \bigr\}
\]
is vanishing.
Note that if the event $\mathcal{A}_i$ occurs, we have
%
%
\begin{eqnarray}
\tau_{i+1}-\tau_i&\geq&\int_{\tau'_i}^{\tau'_i+N^2}
(d+u) (t)\,\mathrm{d}t
\nonumber
\\
&\geq&\frac{k^{1-(i+2)\varepsilon}}{8(\log k)^{2}}\int_{\tau
'_i}^{\tau
'_i+N^2}
\mathbf{1}_{\{\eta^{\wedge}_t \notin\mathcal{H}\mathrm{\ and \ } \eta^{\vee
}_t \notin\mathcal{H}\}} \,\mathrm{d}t
\\
&\geq&\frac{k^{1-(i+2)\varepsilon}}{8(\log k)^{2}} \biggl(N^2-\int
^{\lceil
1/(2\varepsilon) \rceil N^2}_0
\mathbf{1}_{\{\eta^{\wedge}_t\in \mathcal{H} \mathrm{\ or\ } \eta
^{\vee}_t  \in\mathcal{H}\}} \,\mathrm{d}t \biggr).
\nonumber
\end{eqnarray}
According to Lemma~\ref{yapadeh}, w.h.p., the last factor on the right-hand side
is larger than $N^2/2$, and hence w.h.p.,
%
%
\begin{equation}
(\tau_{i+1}-\tau_i)\mathbf{1}_{\mathcal{A}_i} \geq
\frac{N^2
k^{1-(i+2)\varepsilon
}}{16(\log k)^{2}}.
\end{equation}

Hence $\mathcal{A}_i$ has to occur with vanishing probability, or else we
would have a contradiction to Lemma~\ref{leptitauis}.
\end{pf*}

%

%
\begin{appendix}
\section{Proof of technical results}\label{app}

\subsection{Proof of the FKG inequality for permutations}\label{afkg}

We prove that for any pair $(A,B)$ of increasing sets, we have
%
%
\begin{equation}
\label{permut} \mu(A\cap B)\geq\mu(A)\mu(B).
\end{equation}
Then we can deduce the inequality for functions as follows. Given $f$
and $g$ two increasing positive functions (there is no loss of
generality in assuming positivity as adding a constant to $f$ or $g$
leaves the inequality unchanged) and $x,y\in\mathbb{R} $, we define the
increasing sets
\[
A_s= \bigl\{f(\sigma)\geq s \bigr\} \quad\mbox{and}\quad
B_t:= \bigl\{g(\sigma)\geq t \bigr\}.
\]
As $f=\int_{\mathbb{R} _+} A_x\,\mathrm{d}x$,
we can deduce from \eqref{permut} that
%
%
\begin{eqnarray}
\mu\bigl(f(\sigma)g(\sigma) \bigr)&=&\mu\biggl( \int_{\mathbb
{R} _+^2}
\mathbf{1} _{A_s}\mathbf{1}_{B_t}\,\mathrm{d}s \,\mathrm{d}t
\biggr) \geq\int_{\mathbb{R} _+^2} \mu(A_s)
\mu(B_t)\,\mathrm{d}x \,\mathrm{d}y
\nonumber
\\[-8pt]
\\[-8pt]
&=& \mu\bigl(f(\sigma) \bigr)\mu\bigl(g( \sigma) \bigr).
\nonumber
\end{eqnarray}

Let us now prove \eqref{permut}. Let $A$ and $B$ be two increasing
subsets of $S_N$.
Let us start from the identity and run two coupled dynamics $\sigma_t$
and $\sigma^A_t$ defined as follows:
$\sigma_t$ is a normal AT shuffle,
and $\sigma^A_t$ has the same transition rule, except that all the transitions
going out of $A$ are canceled (this is called the \textit{reflected
Markov chain}). We couple the two dynamics using the graphical
construction of Section~\ref{graphix},
with both dynamics using the same clock processes $\mathcal{T} $ and update
variables $U$, the only difference being that $\sigma^A_t$ cancels
the transition that makes it go
out of $A$.

The Markov chain $\sigma^A_t$ is irreducible: the reason for this is
that for each \mbox{$(\sigma, \sigma') \in A^2$} one can always find a
sequence of
up transitions (corresponding to sorting neighbors) from $\sigma$
leading to $\mathbf{1}$ (the identity) and a sequence of down transitions
going from $\mathbf{1}$ to $
\sigma'$. The concatenation of these two sequences provides a path of
transitions from $\sigma$ to $\sigma'$ whose steps are all in $A$
(they are $\geq\sigma$ in the first half and $\geq\sigma'$ in the
second half).
The reader can check that $\mu(\cdot\vert A)$ (i.e., the uniform
measure on $A$) is reversible for $\sigma^A$ (this in fact a general
statement for reflected Markov chain)
and hence that the distribution of $\sigma^A_t$ converges to it.

As the only transitions which are canceled for $\sigma^A$ are those
transitions ``going down'' (corresponding to reverse-sorting of an
adjacent pair), we have
(as a consequence of the proof of Proposition~\ref{orderpreserv})
\[
\forall t\geq0,\qquad\sigma^A_t\geq
\sigma_t.
\]
Using Lemma~\ref{limitlema} we obtain that
%
%
\begin{equation}
\mu(\cdot\vert A)\succeq\mu,
\end{equation}
and we conclude by taking expectation over $B$ for these two measures.

\subsection{Proof of the censoring inequality for permutations}\label{acensor}

To use the censoring inequality, and also to prove it,
we have to work with increasing probability measures.
A key result is that those measures are conserved by the dynamics
(censored and uncensored) in the following sense:

%
\begin{proposition}\label{proppres}
Let $\nu$ be an increasing probability measure on $S_N$.
Then for every $t\geq0$, $P^\nu_t$ is also increasing and for any
censoring scheme,
$P_t^{\nu,\mathcal C}$ is increasing.
\end{proposition}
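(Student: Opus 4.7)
The strategy is to combine the order-preservation of the graphical grand coupling with a simple time-reversal identity for the censored kernel. The key observation is that, once the auxiliary Bernoulli $U_n^x$ is integrated out, each single-ring update of the graphical construction has transition kernel
\[
V_x(\xi,\sigma)=\tfrac{1}{2}\bigl(\ind_{\sigma=\xi}+\ind_{\sigma=\xi\circ\tau_x}\bigr),
\]
which is symmetric in $(\xi,\sigma)$ since $\tau_x$ is an involution; equivalently, each elementary update is reversible with respect to the uniform measure $\mu$. Writing $K_t^{\cC}$ for the kernel of the censored dynamics from time $0$ to $t$, and conditioning on a realization $0<s_1<\cdots<s_n<t$ of the clock process with corresponding positions $x_1,\dots,x_n$, the kernel $K_t^{\cC}$ is the expectation of the time-ordered product $V^{(1)}V^{(2)}\cdots V^{(n)}$, where $V^{(i)}=V_{x_i}$ if $x_i\in\cC(s_i)$ and $V^{(i)}=\mathrm{Id}$ otherwise. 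Transposing such a product reverses the order of the factors, and the resulting product is precisely the kernel of the censored dynamics driven by the time-reversed scheme $\cC^{\leftarrow}(s):=\cC(t-s)$ (the time-reversed clock process is again a rate-$2$ Poisson process, and $\cC^{\leftarrow}(t-s_i)=\cC(s_i)$ guarantees the matching of the active updates). Hence
\[
(K_t^{\cC})^{\top}=K_t^{\cC^{\leftarrow}}.
\]
The uncensored case $\cC\equiv\{1,\dots,N-1\}$ is self-dual under this reversal and recovers the usual reversibility $K_t^{\top}=K_t$.

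The rest is a one-line application of Proposition \ref{orderpreserv}. For $\sigma_1\leq\sigma_2$, using the identity above and the fact that $\mu$ is uniform,
\[
P_t^{\nu,\cC}(\sigma_i)=\sum_{\xi\in S_N}\nu(\xi)K_t^{\cC}(\xi,\sigma_i)=\sum_{\xi\in S_N}\nu(\xi)K_t^{\cC^{\leftarrow}}(\sigma_i,\xi)=\bbE\bigl[\nu(\sigma^{\sigma_i,\cC^{\leftarrow}}_t)\bigr],
\]
where $\sigma^{\sigma_i,\cC^{\leftarrow}}_t$ denotes the censored chain started from $\sigma_i$ and driven by the scheme $\cC^{\leftarrow}$. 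Running the grand coupling of Section \ref{graphix} under the scheme $\cC^{\leftarrow}$, the proof of Proposition \ref{orderpreserv} carries over verbatim (cancelling updates can only preserve the pointwise inequality $\tilde\sigma^{\sigma_1}\geq\tilde\sigma^{\sigma_2}$ and never create a violation), and yields $\sigma^{\sigma_1,\cC^{\leftarrow}}_t\leq\sigma^{\sigma_2,\cC^{\leftarrow}}_t$ almost surely. Since $\nu$ is increasing, this gives $\nu(\sigma^{\sigma_1,\cC^{\leftarrow}}_t)\leq\nu(\sigma^{\sigma_2,\cC^{\leftarrow}}_t)$ a.s., and taking expectations yields $P_t^{\nu,\cC}(\sigma_1)\leq P_t^{\nu,\cC}(\sigma_2)$, which is the desired monotonicity. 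The uncensored conclusion is the special case $\cC\equiv\{1,\dots,N-1\}$.

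The main (mild) obstacle is really just spotting the symmetry of the integrated one-step update and the ensuing transpose identity; once that observation is made, the result reduces immediately to Proposition \ref{orderpreserv} applied to the reversed scheme. A minor technical point to double-check in a careful write-up is that transposition commutes with the expectation over the Poisson clocks, which is immediate from linearity and the uniform boundedness of the random matrices involved.
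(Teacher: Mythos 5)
Your proof is correct, but it takes a route genuinely different from the paper's. The paper's argument (Lemma \ref{conservinc} plus Proposition \ref{proppres}) works directly on the space of measures: it introduces the one-step update operator $\theta_x(\nu)(\sigma):=\nu(\sigma_x^{\bullet})/2$, shows by a direct rearrangement that $\theta_x$ maps increasing laws to increasing laws (using that $\sigma\mapsto\sigma_x^+$ and $\sigma\mapsto\sigma_x^-$ are each order-preserving), then conditions on the clock process, applies this one step at a time, and averages. Your argument instead observes that the integrated one-step kernel $V_x$ is symmetric, derives the global identity $(K_t^{\cC})^{\top}=K_t^{\cC^{\leftarrow}}$ by transposing the time-ordered product of symmetric factors and using the reversal-invariance of the Poisson clock on $[0,t]$, and then reduces everything to the pathwise order preservation of Proposition \ref{orderpreserv} run under the reversed censoring scheme. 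Both proofs rest on the same elementary fact -- reversibility of the elementary update -- but you package it as a transpose identity for the full (censored) semigroup, whereas the paper iterates a monotonicity-preserving operator on densities. Your version is arguably cleaner as a self-contained proof of this particular proposition, since it reuses Proposition \ref{orderpreserv} and avoids introducing $\theta_x$; on the other hand, the paper's $\theta_x$ machinery and the stochastic domination $\nu\succeq\theta_x(\nu)$ established in Lemma \ref{conservinc} are needed again in the proof of the censoring inequality (Proposition \ref{censor}), so the paper's formulation amortizes its setup over both results.
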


The strategy to prove such a statement is to show first that each
individual update does not alter monotonicity, and then to average on
the different possibilities
for the chain of updates given by the clock process.

Given $x\in\{1,\dots,N-1\}$, $\sigma\in S_N$, we set
\[
\sigma_x^{\bullet}:= \bigl\{ \xi\in S_N \vert
\forall y\notin\{x,x+1\}, \xi(y)=\sigma(y) \bigr\}.
\]
The set $\sigma_x^{\bullet}$ contains two elements (one of which is
$\sigma$)
$\sigma_x^+\geq\sigma_x^-$, which are obtained respectively by
sorting and reverse sorting
$\sigma(x)$ and $\sigma(x+1)$.
Given $\nu$ a probability measure on $S_N$, one defines $\theta_x(\nu
)$, the measure ``updated at $x$'' as follows:
%
%
\begin{equation}
\theta_x(\nu) (\sigma):=\nu\bigl(\sigma_x^{\bullet}
\bigr)/2.
\end{equation}
The operator $\theta_x$ describes how the law of $\sigma_t$ is
changed when the clock-process rings at $x$.

%
\begin{lemma}\label{conservinc}
If $\nu$ is increasing, so is $\theta_x(\nu)$ and furthermore $\nu
\succeq\theta_x(\nu^x)$.
\end{lemma}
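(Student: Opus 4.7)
The plan is to prove the two assertions separately, both relying on the fact that for any $\sigma \in S_N$ the two-element set $\sigma_x^\bullet = \{\sigma_x^+, \sigma_x^-\}$ is totally ordered with $\sigma_x^+ > \sigma_x^-$, which is immediate from the definition~\eqref{tildesigma}: the only coordinates of $\tilde\sigma$ affected when passing from $\sigma_x^+$ to $\sigma_x^-$ are of the form $(x, y)$ with $\min(\sigma(x), \sigma(x{+}1)) \leq y < \max(\sigma(x), \sigma(x{+}1))$, and on these $\tilde\sigma_x^+ < \tilde\sigma_x^-$ (recall that $\sigma \leq \sigma' \Leftrightarrow \tilde\sigma \geq \tilde\sigma'$ everywhere).

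For the first claim (that $\theta_x(\nu)$ is increasing when $\nu$ is), given $\sigma \leq \sigma'$ I will argue that $\sigma_x^+ \leq \sigma'^+_x$ and $\sigma_x^- \leq \sigma'^-_x$. This is a specialization of Proposition~\ref{orderpreserv} to the graphical construction restricted to a single ring of $\cT^x$ with prescribed update variable $U^x_1 = 1$ (respectively $U^x_1 = 0$). Given this, monotonicity of $\nu$ yields
\begin{equation*}
\theta_x(\nu)(\sigma) = \tfrac{1}{2}\bigl(\nu(\sigma_x^+) + \nu(\sigma_x^-)\bigr) \leq \tfrac{1}{2}\bigl(\nu(\sigma'^+_x) + \nu(\sigma'^-_x)\bigr) = \theta_x(\nu)(\sigma'),
\end{equation*}
as desired.

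For the stochastic domination $\nu \succeq \theta_x(\nu^x)$, I interpret $\nu^x$ as $\nu$ (the most natural reading, and in any case the conclusion reduces to $\nu \succeq \theta_x(\nu)$ since $\theta_x$ is idempotent: $\theta_x(\nu)$ is constant on each class $\sigma_x^\bullet$, so $\theta_x \circ \theta_x = \theta_x$). I will construct a coupling class by class. The sets $\sigma_x^\bullet$ partition $S_N$ into two-element blocks, and within a block of total $\nu$-mass $m := \nu(\sigma_x^+) + \nu(\sigma_x^-)$, monotonicity of $\nu$ combined with $\sigma_x^+ > \sigma_x^-$ gives $\nu(\sigma_x^+) \geq m/2 \geq \nu(\sigma_x^-)$. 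I then define a conditional coupling that places mass $m/2$ on $(\sigma_x^+, \sigma_x^+)$, mass $\nu(\sigma_x^-)$ on $(\sigma_x^-, \sigma_x^-)$, and the remaining mass $\nu(\sigma_x^+) - m/2 \geq 0$ on the off-diagonal pair $(\sigma_x^+, \sigma_x^-)$. A direct check verifies that the first marginal is $\nu$ and the second is $\theta_x(\nu)$, and the first coordinate dominates the second by construction (the only off-diagonal pair goes from $\sigma_x^+$ down to $\sigma_x^-$). Summing these conditional couplings over all classes produces the desired global coupling.

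The main technical point is really the order-preservation of a single sort/reverse-sort update. This is subsumed by Proposition~\ref{orderpreserv}, but since that proposition is stated for the full dynamics, I would spell out briefly why running a single update preserves the order between two initial configurations—in essence this is the local argument already present in the proof of Proposition~\ref{orderpreserv} (comparing the $y$-th heights at coordinate $x$ in the cases where $\tilde\sigma^\xi_{t^-}(x,y) > \tilde\sigma^{\xi'}_{t^-}(x,y)$ versus equality). Once this is in place, both parts of the lemma are short.
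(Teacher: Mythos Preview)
Your proof is correct. For the first assertion your argument coincides with the paper's: both use the single-update order preservation $\sigma \le \sigma' \Rightarrow \sigma_x^\pm \le (\sigma')_x^\pm$ and then add the two inequalities $\nu(\sigma_x^+)\le\nu((\sigma')_x^+)$ and $\nu(\sigma_x^-)\le\nu((\sigma')_x^-)$. For the stochastic domination $\nu\succeq\theta_x(\nu)$, however, you take a different route. The paper invokes the functional characterization of $\succeq$ (Lemma~\ref{transport}): for any increasing $g$ it verifies the two-point correlation inequality
\[
g(\sigma_x^+)\nu(\sigma_x^+) + g(\sigma_x^-)\nu(\sigma_x^-) \ge \tfrac12\bigl(g(\sigma_x^+)+g(\sigma_x^-)\bigr)\bigl(\nu(\sigma_x^+)+\nu(\sigma_x^-)\bigr),
\]
which holds because $g(\sigma_x^+)\ge g(\sigma_x^-)$ and $\nu(\sigma_x^+)\ge\nu(\sigma_x^-)$, and then sums over blocks to obtain $\nu(g)\ge \theta_x(\nu)(g)$. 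You instead build the monotone coupling explicitly on each two-element block. Both arguments are short; yours is more self-contained (no appeal to Lemma~\ref{transport}), while the paper's exposes the underlying Chebyshev-type inequality more directly.

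One minor quibble: in your opening paragraph you write $\tilde\sigma_x^+ < \tilde\sigma_x^-$ on the affected coordinates, but a direct computation from~\eqref{tildesigma} gives the reverse inequality (sorting the pair at $x$ \emph{increases} $\tilde\sigma(x,\cdot)$, consistently with the identity being maximal and having nonnegative $\tilde\sigma$). The displayed order convention you quote from the paper appears to carry a sign typo. This does not affect your argument, since you only use $\sigma_x^+ > \sigma_x^-$, which is stated explicitly in the text.
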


\begin{pf}
If $\sigma\geq\xi$,
the reader can check that $\sigma_x^+\geq\xi_x^+$ and $\sigma
_x^-\geq
\xi_x^-$.
Hence
%
%
\begin{equation}
\nu\bigl(\sigma_x^{\bullet} \bigr)=\nu\bigl(
\sigma_x^+ \bigr)+\nu\bigl(\sigma_x^- \bigr) \geq\nu
\bigl( \xi_x^+ \bigr)+\nu\bigl(\xi_x^- \bigr)=\nu\bigl(
\xi_x^{\bullet} \bigr),
\end{equation}
and thus $\theta_x(\nu)$ is increasing if $\nu$ is increasing.

Let $g$ be an increasing function.
If $\nu$ is increasing, then we have $\nu(\sigma^+_x)\geq\nu
(\sigma
^-_x)$ and hence
%
%
\begin{eqnarray}
\qquad g \bigl(\sigma_x^+ \bigr)\nu\bigl(\sigma^+_x
\bigr)+ g \bigl(\sigma_x^- \bigr)\nu\bigl(\sigma^-_x
\bigr) &\geq& \bigl(g \bigl(\sigma_x^+ \bigr)+g \bigl(
\sigma_x^- \bigr) \bigr)\frac{\nu^x(\sigma_x^+)+\nu
^x(\sigma_x^-)}{2}
\nonumber
\\[-8pt]
\\[-8pt]
&=& g \bigl(\sigma_x^+ \bigr)\theta_x(\nu) \bigl(
\sigma_x^+ \bigr)+ g \bigl(\sigma_x^- \bigr)\theta
_x(\nu) \bigl(\sigma_x^- \bigr).
\nonumber
\end{eqnarray}
Summing over all $\sigma\in S_N$ and dividing by two, one obtains
\[
\nu(g)\geq\theta_x(\nu) (g).
\]
As $g$ is arbitrary, this implies
\[
\nu\succeq\theta_x(\nu).
\]
\upqed
\end{pf}

\begin{pf*}{Proof of Proposition~\ref{proppres}}
Let $\nu$ be an increasing probability and $\sigma^\nu_t$ be the
Markov chain trajectory obtained with
the graphical construction. By definition we have
%
%
\begin{equation}
P^{\nu}_t=\mathbb{P} \bigl[\sigma^\nu_t
\in\cdot\bigr].
\end{equation}

Let $\mathcal{N} $ denote the number of updates which have occurred
before time
$t$ and
$X_{1}, \dots, X_{\mathcal{N} }$ denote the sequence of vertices that have
rung for the clock process (with repetitions).
Then the probability law $\mathbb{P} [\sigma^\nu_t\in\cdot\vert
\mathcal{T} ]$,
knowing the clock process is given by
\[
\theta_{X_{\mathcal{N} }}\circ\cdots\circ\theta_{X_1}(\nu),
\]
is increasing according to Lemma~\ref{conservinc}.
The monotonicity is then conserved when averaging with respect to
$\mathcal{T} $.
The reasoning remains valid for the censored dynamics.
\end{pf*}

We end the preparation of the proof with two additional lemmas on monotonicity.
The first is simply a consequence of the graphical construction of
Section~\ref{graphix}.

%
%
\begin{lemma}\label{microdil}
Updates preserve stochastic domination in the sense that if \mbox{$\nu
_1\succeq\nu_2$}, then
\[
\theta_x(\nu_1)\succeq\theta_x(
\nu_2).
\]
\end{lemma}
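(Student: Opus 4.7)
The plan is to exploit the coupling characterization of stochastic domination combined with the synchronous update rule used in the graphical construction of Section \ref{graphix}. Concretely, starting from the hypothesis $\nu_1 \succeq \nu_2$, pick a coupling $\pi$ on $S_N \times S_N$ whose marginals are $\nu_1$ and $\nu_2$ and which is supported on the order set $\{(\sigma,\xi) \ | \ \sigma \ge \xi\}$. Then draw an independent Bernoulli variable $U \in \{0,1\}$ of parameter $1/2$, and define a new pair $(\sigma',\xi')$ by
\begin{equation}
(\sigma',\xi') := \begin{cases} (\sigma_x^+, \xi_x^+) & \text{if } U=1,\\ (\sigma_x^-, \xi_x^-) & \text{if } U=0. \end{cases}
\end{equation}
This produces a probability law $\pi'$ on $S_N\times S_N$, and the claim is that $\pi'$ is a coupling of $\theta_x(\nu_1)$ and $\theta_x(\nu_2)$ supported on the order set.

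First, I would check that the first marginal of $\pi'$ is $\theta_x(\nu_1)$. Conditionally on $\sigma$, the new first coordinate is uniform on $\sigma_x^\bullet$, so for any $\eta \in S_N$
\begin{equation}
\mathbb{P}[\sigma'=\eta] = \sum_{\sigma} \nu_1(\sigma)\,\mathbb{P}[\sigma'=\eta \ | \ \sigma] = \frac{1}{2}\nu_1(\eta_x^\bullet) = \theta_x(\nu_1)(\eta),
\end{equation}
where we use that $\eta \in \sigma_x^\bullet$ if and only if $\sigma \in \eta_x^\bullet$. The same computation works for the second marginal.

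Next, I would check the order-preservation of the update. This is precisely the elementary observation already made in the proof of Lemma \ref{conservinc}: whenever $\sigma \ge \xi$, one has $\sigma_x^+ \ge \xi_x^+$ and $\sigma_x^- \ge \xi_x^-$ (sorting and reverse-sorting both act monotonically on the pair of values at positions $x,x+1$, and the other coordinates are left unchanged). Since $\pi$ is supported on $\{\sigma \ge \xi\}$, it follows that $\pi'$ is supported on $\{\sigma' \ge \xi'\}$, giving the desired stochastic domination $\theta_x(\nu_1) \succeq \theta_x(\nu_2)$.

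There is really no obstacle here; the only small subtlety is the verification of the marginal identity when $\sigma_x^+ = \sigma_x^-$ (i.e., the case of a fixed-point update), which is automatic since the definition of $\theta_x$ counts the singleton $\sigma_x^\bullet$ with multiplicity, and the Bernoulli randomization still produces the correct total mass. The lemma is essentially a direct translation of Proposition \ref{orderpreserv} restricted to a single elementary update step.
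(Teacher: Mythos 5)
Your proof is correct and is precisely the argument the paper has in mind when it states that Lemma \ref{microdil} is ``simply a consequence of the graphical construction of Section \ref{graphix}'': run the two coordinates of a monotone coupling through the same update, using a shared Bernoulli variable to choose between $\cdot_x^+$ and $\cdot_x^-$, and invoke the one-step order preservation $\sigma\ge\xi\Rightarrow\sigma_x^\pm\ge\xi_x^\pm$ already noted in the proof of Lemma \ref{conservinc}. (A minor remark: on $S_N$ one always has $\sigma_x^+\neq\sigma_x^-$ since $\sigma(x)\neq\sigma(x+1)$, so the degenerate case you flag at the end never arises for permutations; it would only be relevant when adapting the argument to $\gO_{N,k}$, where $\eta_x^\bullet$ may be a singleton.)
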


%
\begin{lemma}\label{increden}
If $\nu_1$ has an increasing density and $\nu_1\preceq\nu_2$, then
\[
\llVert\nu_1-\mu\rrVert_{\mathrm{TV}}\leq\llVert
\nu_2-\mu\rrVert_{\mathrm{TV}}.
\]
\end{lemma}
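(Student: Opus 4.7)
The approach I will take is to identify the optimal test set in the variational formula for $\|\nu_1 - \mu\|_{TV}$ and observe that the hypothesis of monotone density forces this set to be increasing, after which stochastic domination finishes the job in one line.

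First I would introduce
$$A := \{\sigma : \nu_1(\sigma) \ge \mu(\sigma)\} = \{\sigma : (\nu_1/\mu)(\sigma) \ge 1\}.$$
Since $\nu_1/\mu$ is assumed to be an increasing function on the state space, $A$ is an increasing subset. The classical identity $\|\alpha - \beta\|_{TV} = \sum_\omega (\alpha(\omega) - \beta(\omega))_+$ applied with the choice $A$ on the right then gives the exact equality
$$\|\nu_1 - \mu\|_{TV} = \nu_1(A) - \mu(A),$$
i.e.\ the supremum in the dual formula is attained at the increasing set $A$.

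Next I would invoke the assumption $\nu_1 \preceq \nu_2$. Because $\ind_A$ is an increasing function, Lemma \ref{transport} yields $\nu_2(A) \ge \nu_1(A)$. Combining this with the trivial variational lower bound $\|\nu_2 - \mu\|_{TV} \ge \nu_2(A) - \mu(A)$ (valid for every subset $A$) one obtains
$$\|\nu_2 - \mu\|_{TV} \ge \nu_2(A) - \mu(A) \ge \nu_1(A) - \mu(A) = \|\nu_1 - \mu\|_{TV},$$
which is the desired inequality.

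There is no real obstacle: once one notices that monotone density pairs $\|\nu_1-\mu\|_{TV}$ with an increasing test event, the Kantorovich characterization of stochastic domination concludes in a single application. The same argument works verbatim on $\gO_{N,k}$ and will be reused in that setting.
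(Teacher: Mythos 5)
Your proposal is correct and follows essentially the same route as the paper: define $A=\{\sigma:\nu_1(\sigma)\ge\mu(\sigma)\}$, note it is increasing because $\nu_1/\mu$ is, use $\|\nu_1-\mu\|_{TV}=\nu_1(A)-\mu(A)$, and then apply stochastic domination via Lemma~\ref{transport}. If anything you are slightly more careful than the paper, which writes $\nu_2(A)-\mu(A)=\|\nu_2-\mu\|_{TV}$ where an inequality $\le$ is all that is needed (and all that is guaranteed).
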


\begin{pf}
Set
\[
A:= \bigl\{ \sigma\vert\nu_1(\sigma)\geq\mu(\sigma
)=(n!)^{-1} \bigr\}.
\]
As $\nu_1$ has an increasing density, $A$ is an increasing event and
%
%
\begin{equation}
\llVert\nu_1-\mu\rrVert_{\mathrm{TV}}=\nu_1(A)-
\mu(A) \leq\nu_2(A)-\mu(A)=\llVert\nu_2-\mu\rrVert
_{\mathrm{TV}}.
\end{equation}
\upqed
\end{pf}

Let us first prove Proposition~\ref{censor} for a fixed sequence of updates.

%
\begin{proposition}\label{detcensor}
Let $\nu_0$ be an increasing probability on $S_N$ and $k\in\mathbb
{N} $.

Given $(x_1,\dots, x_k)\in\{1,\dots,N-1\}^k$ (repetitions are allowed)
and $j\in\{1,\dots,\allowbreak k\}$.
Let $\nu_1$ denote the measure obtained by performing successive
updates at site $x_1,\dots, x_k$ and
$\nu_2$ denote the measure being obtained by performing the same
sequence of updates, omitting the one at $x_j$
(i.e., $x_1,\dots,x_{j-1},\break x_{j+1},x_{j+2},\dots, x_k)$.

Then
\[
\llVert\nu_1-\mu\rrVert_{\mathrm{TV}}\geq\llVert
\nu_2-\mu\rrVert_{\mathrm{TV}}.
\]

The result remains valid if several updates are omitted instead of one.
\end{proposition}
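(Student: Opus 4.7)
The plan is to compare the two sequences of measures step by step and conclude by invoking Lemma \ref{increden}. Set $\alpha_0 := \beta_0 := \nu_0$ and define recursively, for $i=1,\dots,k$,
\begin{equation*}
\alpha_i := \theta_{x_i}(\alpha_{i-1}), \qquad \beta_i := \begin{cases}\theta_{x_i}(\beta_{i-1}) & \text{if } i\neq j,\\ \beta_{i-1} & \text{if } i=j,\end{cases}
\end{equation*}
so that $\nu_1=\alpha_k$ and $\nu_2=\beta_k$. By Proposition \ref{proppres}, all $\alpha_i$ and $\beta_i$ are increasing probability measures.

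The heart of the argument is the claim that $\alpha_i \preceq \beta_i$ for every $i$, which I prove by induction on $i$. For $i<j$ the two measures coincide. At $i=j$, one has $\beta_j=\alpha_{j-1}$ while $\alpha_j=\theta_{x_j}(\alpha_{j-1})$; Lemma \ref{conservinc} applied to the increasing measure $\alpha_{j-1}$ then gives exactly $\alpha_j=\theta_{x_j}(\alpha_{j-1})\preceq\alpha_{j-1}=\beta_j$. For $i>j$, the inductive hypothesis $\alpha_{i-1}\preceq\beta_{i-1}$ combined with the order-preservation of $\theta_{x_i}$ (Lemma \ref{microdil}) yields $\alpha_i \preceq \beta_i$. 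Specialising to $i=k$ gives $\nu_1 \preceq \nu_2$.

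To conclude, note that $\nu_1$ still has an increasing density with respect to $\mu$ (since $\nu_0$ does, by Proposition \ref{proppres}). Lemma \ref{increden} applied to the pair $\nu_1 \preceq \nu_2$ then produces the desired comparison of total-variation distances to $\mu$. The extension to several omitted updates follows immediately by iterating the construction: restore the censored updates one at a time, each step producing a new stochastic-order comparison by exactly the same mechanism, and chain the resulting inequalities through Lemma \ref{increden}.

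The only delicate point is the step $i=j$, where one converts the deterministic inequality $\alpha_{j-1}\succeq\theta_{x_j}(\alpha_{j-1})$ of Lemma \ref{conservinc} into the seed of the full comparison; everything else is a routine propagation of stochastic dominance. Crucially, once both sides are again updated by the same $\theta_{x_i}$, the order survives via Lemma \ref{microdil}, and it is the density-monotonicity of $\nu_1$ (rather than of $\nu_2$) that ultimately makes Lemma \ref{increden} applicable at the end.
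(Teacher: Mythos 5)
Your proposal is correct and follows essentially the same path as the paper: the paper reduces without loss of generality to $j=1$, compares $\theta_{x_1}(\nu_0)$ with $\nu_0$ via Lemma \ref{conservinc}, propagates the stochastic domination forward with Lemma \ref{microdil}, and closes with Lemma \ref{increden} --- which is exactly the content of your explicit induction on $i$, with the WLOG reduction replaced by the trivial base case $\alpha_i=\beta_i$ for $i<j$. Two small notes: the fact that a single update $\theta_x$ preserves the increasing-density property is Lemma \ref{conservinc}, not Proposition \ref{proppres} (the latter is the time-$t$ consequence obtained by averaging over the clock process); and your chain of lemmas correctly delivers $\|\nu_1-\mu\|_{TV}\le\|\nu_2-\mu\|_{TV}$, which is the direction consistent with the censoring heuristic and with the paper's own use of Lemma \ref{increden} --- the reversed sign in the displayed conclusion of Proposition \ref{detcensor} (and the reversed dominance $\nu_2\preceq\nu_1$ written in the paper's proof) is a typographical slip.
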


\begin{pf}
Without loss of generality
we can consider that $j=1$ as the law obtained after the performing
$j-1$ first update has an increasing density; cf. Lemma~\ref{conservinc}.
Let $\nu'_0$ be the measure obtained after updating $x_1$. From
Lemma~\ref{conservinc}, we have
\[
\nu'_0\preceq\nu_0.
\]

As monotonicity is preserved by the updates at $(x_2,\dots,x_k)$ (cf.
Lemma~\ref{microdil}), we have
\[
\nu_2\preceq\nu_1.
\]

Furthermore from Lemma~\ref{conservinc},
both have increasing densities, and one can conclude using Lemma~\ref
{increden}.

The case of several omissions can be proved using a straightforward
induction.
\end{pf}

\begin{pf*}{Proof of the censoring inequality}
In our dynamics, at time $t$, the set of updates that have been
performed is random and is given by the clock process $\mathcal{T} $ restricted
to $[0,t]$
(recall the graphical construction of Section~\ref{graphix}) so that
Proposition~\ref{detcensor} cannot apply directly. However, for a
fixed realization of $\mathcal{T} $, we can apply Proposition~\ref{detcensor}
conditioned to $\mathcal{T} $.

Set
\[
p_t^{\mathcal{T} }:=\mathbb{P} \bigl[ \sigma^\nu_t
\in\cdot\vert\mathcal{T} \bigr]
\]
to be the law of $\sigma$ obtained after doing the updates
corresponding to $\mathcal{T} $,
and
\[
p_t^{\mathcal{T} ,\mathcal{C}}:= \mathbb{P} ^{\mathcal{C}} \bigl[
\sigma^\nu_t\in\cdot\vert\mathcal{T} \bigr]
\]
the one obtained after performing only the updates allowed the
censoring scheme.
Both probability measures are increasing, and from Proposition~\ref{detcensor},
\[
p^{\mathcal{T} }\succeq p^{\mathcal{T} ,\mathcal{C}}.
\]

These two properties are conserved when averaging with respect to
$\mathcal{T}
$ so that
\[
P_t^{\nu}\succeq P_t^{\nu,\mathcal{C}},
\]
and Lemma~\ref{increden} allows us to conclude.
\end{pf*}

\subsection{Proof of Proposition \texorpdfstring{\protect\ref{projecmon}}{3.8}}\label{aprojecmon}

First of all, we notice that items (iii) and (iv) can be obtained
simply by integrating the increasing function
$\nu/\mu$ against inequalities \eqref{nastro} and \eqref{dominus}.

We will only prove \eqref{nastro}. The reader can check then that the
proof also works if the grid
$(x_i,x_j)_{i,j=1}^{K-1}$ is replaced by an asymmetric one
$(x_i,y_j)_{i,j=1}^{K-1}$ and that in any case the particular values of
the $x_i$ do not play any role.
Thus \eqref{dominus} simply corresponds to the case $K=2$.

We prove the result in two steps.
First, we prove that if $\widehat\sigma^1,\widehat\sigma^2\in
\widehat S_N$ and
\mbox{$\widehat\sigma^1\geq\widehat\sigma^2$}, then
%
%
\begin{equation}
\label{nastros} \mu\bigl(\cdot\vert\widehat\sigma=\widehat
\sigma^1 \bigr)\succeq\mu\bigl(\cdot\vert\widehat\sigma
=\widehat
\sigma^2 \bigr).
\end{equation}
Then we show that if $\bar\sigma^1, \bar\sigma^2\in\bar S_N$ and
$\bar\sigma^1\geq\bar\sigma^2$, we have
%
%
\begin{equation}
\label{ledernier} \widehat\mu\bigl(\cdot\vert\bar\xi=\bar
\sigma^1
\bigr)\succeq\widehat\mu\bigl(\cdot\vert\bar\xi=\bar\sigma^2
\bigr),
\end{equation}
where, in the above equation $\bar\xi$ denotes projection of $\xi\in
\widehat S_N$ on $\bar S_N$.

Before going to the core of the proof, let us show that the combination
of \eqref{nastros} and \eqref{ledernier}
yields \eqref{nastro}. Let $f$ be an increasing function on $S_N$, and
we define $\widehat f$ on $\widehat S_N$ by
%
%
\begin{equation}
\label{hatf} \widehat f(\xi)= \mu\bigl(f(\sigma) \vert\widehat
\sigma=\xi
\bigr).
\end{equation}
Relation \eqref{nastros} implies that $\widehat f$ is an increasing
function on $\widehat S_N$.
Finally, if $\bar\sigma_1\geq\bar\sigma_2$,
%
%
\begin{eqnarray}
\mu\bigl( f(\sigma) \vert\bar\sigma=\bar\sigma_1 \bigr)&=&
\widehat\mu\bigl( \widehat f(\xi) \vert\bar\xi=\bar\sigma_1
\bigr)
\nonumber
\\[-8pt]
\\[-8pt]
&\geq&\widehat\mu\bigl( \widehat f(\xi) \vert\bar\xi
=\bar
\sigma_2 \bigr)= \mu\bigl( f(\sigma) \vert\bar\sigma=\bar
\sigma_1 \bigr),
\nonumber
\end{eqnarray}
where the inequality uses \eqref{ledernier} and the fact that
$\widehat f$
is increasing.
This is enough to conclude by using Lemma~\ref{transport}.

Let us prove \eqref{nastros}. First, we notice that the information
given by $\widehat\sigma$ is exactly the value of the sets
\[
\sigma^{-1} \bigl(\{x_{i-1}+1,\dots,x_{i}\}
\bigr),\qquad i\in\{1,\dots,K\}.
\]
For each $i$, this set is given by
%
%
\begin{eqnarray}
&& \bigl\{ x\in\{0,\dots, N\} \vert
\nonumber
\\[-8pt]
\\[-8pt]
&&\qquad\widehat\sigma(x,i+1)-\widehat\sigma(x-1,i+1)-\widehat
\sigma(x,i)+
\widehat\sigma(x-1,i)>0 \bigr\} .
\nonumber
\end{eqnarray}
The missing information is in what order the cards, whose labels belong
to $\{x_{i-1}+1 ,\dots,x_{i}\}$, appear in the pack.
Hence for each $\xi\in\widehat S^N$, there is a natural bijection
%
%
\begin{eqnarray}
\label{benarfa} %
\bigotimes_{i=1}^K
S_{\Delta x_{i}}&\to&\{ \sigma\in S_N \vert\widehat\sigma=\xi\},
\nonumber
\\[-8pt]
\\[-8pt]
(\sigma_1,\dots,\sigma_K) &\mapsto&\sigma^{(\sigma_1,\dots
,\sigma
_K)}_\xi,
\nonumber
\end{eqnarray}
where ${\Delta x_{i}}:=x_{i}-x_{i-1}$.
The permutation $\sigma^{(\sigma_1,\dots,\sigma_K)}_\xi$, is
defined to be the one\vspace*{2pt} in $\{ \sigma\in S_N \vert \widehat\sigma=\xi
\}$
for which, for all $i\in\{1,\dots,K\}$,
the card with the label $\{x_{i-1},\dots,x_{i}\}$ appears in the deck
in the order specified by $\sigma_i$,
%
%
\begin{eqnarray}
\label{samorder} &&\forall a, b \in\{x_{i-1}+1,\dots, x_i\}
\nonumber
\\[-8pt]
\\[-8pt]
\eqntext{\displaystyle\sigma^{-1}(a)\leq\sigma^{-1}(b)
\quad\Leftrightarrow\quad\sigma_i^{-1}(a-x_{i-1})
\leq\sigma_i^{-1}(b-x_{i-1}).}
\end{eqnarray}
The reader can check that given $(\sigma_1,\dots,\sigma_K)$ and $\xi
$, there is a unique permutation satisfying $\widehat\sigma=\xi$ and
\eqref{samorder}.

Mapping \eqref{benarfa} has the following expression in terms on
surfaces: for all $y\in\{x_{i-1},\dots,x_i\}$
%
%
\begin{eqnarray}
\qquad\widetilde\sigma_{\xi}^{(\sigma^1,\dots,\sigma^K)}(x,y)&=&
\frac{y-x_{i-1}}{\Delta x_i}
\xi(x,i)+ \frac{x_{i}-y}{\Delta x_i}\xi(x,i-1)
\nonumber
\\[-8pt]
\\[-8pt]
&&{}+\widetilde\sigma_i \biggl(\xi(x,i)-\xi(x,i-1)+
\frac{ x\Delta x_i}N ,y-x_{i-1} \biggr).
\nonumber
\end{eqnarray}
If $\xi\geq\xi'$ are two admissible semi-skeletons, it is tedious
but straightforward to check with the above expression that
for any $(\sigma^1,\dots,\sigma^K)$,
\[
\widetilde\sigma_{\xi}^{(\sigma^1,\dots,\sigma^K)}\geq\widetilde
\sigma
_{\xi'}^{(\sigma^1,\dots,\sigma^K)}.
\]
Hence the uniform measure on $\prod_{i=1}^K S_{\Delta x_{i}}$ induces a
monotonous coupling proving \eqref{nastros}.

Let us now prove \eqref{ledernier}.
Given $\bar\sigma_1\geq\bar\sigma_2$, we consider $\widehat S^1$ and
$\widehat S^2$ defined by
\[
\widehat S^i:= \{ \xi\in\widehat S_N \vert\bar\xi=\bar
\sigma_i\}.
\]
Let us prove that each $\widehat S^i$ possesses a maximal element $\xi
^i_{\max}$ and that they satisfy
%
%
\begin{equation}
\label{lesordere} \xi^1_{\max}\geq\xi^2_{\max}.
\end{equation}
%

%
\begin{figure}[b]

\includegraphics{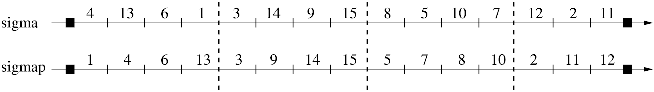}

\caption{The transformation $\sigma\to\sigma'$,
obtained by sorting the cards in each interval ($N=15$, $K=4$).}
\label{sorting}
\end{figure}

To obtain the maximal element of $\widehat S^1$, we start by taking
$\sigma
\in S_N$ such that \mbox{$\widehat\sigma\in\widehat S^1$}.
Then we consider $\sigma'$, the permutation obtained by sorting the
elements in each interval $\{x_{i-1}+1,\dots, x_i\}$, for all $i\in\{
1,\dots, K\}$ (see Figure~\ref{sorting}),
that is, the unique permutation which satisfies
%
%
\begin{equation}
\label{defsprim} \hspace*{4pt}\qquad\forall i\in\{1,\dots, K\},\qquad\sigma'
\bigl( \{x_{i-1}+1,\dots, x_i\} \bigr)=\sigma\bigl(
\{x_{i-1}+1,\dots, x_i\} \bigr),
\end{equation}
and
%
%
\begin{eqnarray}
&&\forall i\in\{1,\dots, K\}, \forall(y,z)\in\{x_{i-1}+1,\dots,
x_i\},
\nonumber
\\[-8pt]
\\[-8pt]
\eqntext{\displaystyle y\leq z \quad\Rightarrow\quad\sigma'(y)
\leq\sigma'(z).}
\end{eqnarray}

Then for all $i\in\{1,\dots, K\}$, $j\in\{0,\dots, K\}$ and $x \in
\{ x_{i-1},\dots, x_i\}$,
we have
%
%
\begin{equation}
\label{dooz} \quad\qquad\widehat\sigma' (x,j):= \min\biggl(
\frac{N-x_j}{N}(x-x_{i-1})+\bar\sigma(i-1,j), \frac{x_j}{N}(x_i-x)+
\bar\sigma(i,j) \biggr).
\end{equation}
This guarantees that $\widehat\sigma'$ is maximal in $\widehat S^1$ (and
hence the existence of a maximal element).
The expression of the maximum implies \eqref{lesordere}.

Let $\xi^1_t$ and $\xi^2_t$ be the Markov chain on $\widehat S^i$
constructed with the graphical construction from $U$ and $\mathcal{T} $
but ignoring the update at $x_i$, $i=1,\ldots,K-1$, starting from $\xi
^1_{\max}$ and $\xi^2_{\max}$, respectively.\vspace*{2pt}
This censoring corresponds to canceling updates that take $\xi^i_t$
out of $\widehat S^i$.

The Markov chains $\xi^1_t$ and $\xi^2_t$ are irreducible:
indeed given $\xi\in\widehat S^1$, we can find $\sigma$ such that
$\widehat
\sigma=\xi$.
Then from $\sigma$ it is possible to construct a path of transition
leading to $\sigma'$ [the maximal element described in \eqref{dooz}]
that does not use
any of the $\tau_{x_i}$, and projecting this path with the
semi-skeleton projection gives us a path of allowed transition
from $\xi$ to $\xi^1_{\max}$.

As the\vspace*{2pt} $\xi^i_t$ are reflected Markov chains, their respective
equilibrium measures are
$\widehat\mu(\cdot\vert \bar\xi= \bar\sigma^i)$, $i=1,2$ (which is
the uniform measure on $\widehat S^i$).
The ordering of the initial condition and the order preservation
induced by the graphical construction (see the proof of
Proposition~\ref{orderpreserv}) implies
\[
\forall t\geq0,\qquad\xi^1_t\geq\xi^2_t.
\]

Having this monotone coupling between the two processes, we use
Lemma~\ref{limitlema} to conclude.

\subsection{Proof of Lemma \texorpdfstring{\protect\ref{boudk}}{4.1}}\label{aboudk}

For any fixed $y$,
the solution of \eqref{disheat} can be computed by Fourier
decomposition on the basis of
eigenfunctions $(u_i)_{i=1}^{N-1}$ of $\Delta_x$ given by
%
%
\begin{equation}
\label{eigenu} u_i\dvtx x\mapsto\sqrt{\frac{2} N}\sin
\biggl( \frac{x i\pi}{N} \biggr).
\end{equation}
The eigenvalue associated to $u_i$ is $-\lambda_{N,i}$ where
%
%
\begin{equation}
\label{lambdani} \lambda_{N,i}:=2 \biggl(1-\cos\biggl(
\frac{i\pi}{N} \biggr) \biggr).
\end{equation}
Hence
%
%
\begin{equation}
\label{fourier} f(x,y,t)= \frac{2}{N}\sum_{i=1}^{N-1}
a_i \bigl(\widetilde\sigma_0(\cdot,y)
\bigr)e^{-\lambda_{N,i} t} \sin\biggl( \frac{x i\pi}{L} \biggr),
\end{equation}
where the Fourier coefficient $a_i$ is given by
%
%
\begin{equation}
a_i \bigl(\widetilde\sigma_0(\cdot,y) \bigr):=\sum
_{x=1}^{N-1}\widetilde\sigma
_0(x,y)\sin\biggl( \frac{x i\pi}{N} \biggr).
\end{equation}

We have, by definition of $\widetilde\sigma$,
\[
\bigl\llvert\widetilde\sigma_0(x,y) \bigr\rrvert\leq\min(y,N-y)
\qquad\forall x\in\{ 0,\dots,N\}
\]
(in the remainder of the proof we assume $y\leq N/2$ for simplicity),
and hence the Fourier coefficients satisfy
\[
\llvert a_i\rrvert\leq yN\qquad\forall i\in\{1,\dots,N-1\}.
\]
Moreover, the reader can check that $\lambda_{i,N}\geq i \lambda
_{N}$, for all
$i\in\{1,\dots,N-1\}$, and hence
we deduce from \eqref{fourier} that
%
%
\begin{equation}
\bigl\llvert f(x,t) \bigr\rrvert\leq2y \sum_{i=1}^{N-1}
e^{-i\lambda_{N} t}= \frac{2y
e^{-\lambda
_{N} t}}{1-e^{-\lambda_{N} t}}.
\end{equation}
When $e^{-\lambda_{N} t}\leq1/2$, this implies \eqref{group}, and when
$e^{-\lambda_{N} t}\geq1/2$
we have that $\llvert f(x,t)\rrvert \leq y$ because $\llvert
\widetilde\sigma(x,y,t)\rrvert \leq y$,
and hence \eqref{group}
is also valid in this case too.

For \eqref{group3},
note that when $y\leq N/2$,
%
%
\begin{eqnarray}
\min\biggl( x \biggl( 1- \frac{y} N \biggr), (N-x) \frac{y} N
\biggr)&\geq& \min\biggl( x \frac{y} N, (N-x) \frac{y} N \biggr)
\nonumber
\\
[-8pt]
\\[-8pt]
&=& \frac{y} \pi\min\biggl( \frac{x\pi}{N}, \pi-
\frac{x\pi}{N} \biggr).
\nonumber
\end{eqnarray}
Hence using the identity $\sin u\leq\min(u, \pi-u)$ valid for $u\in
[0,\pi]$, we obtain
%
%
\begin{equation}
\label{qpro} \forall x\in\{1,\dots,N-1\},\qquad\widetilde
\sigma_0(x,y) \geq\frac
{y}{\pi}\sin\biggl(
\frac{x \pi}{N} \biggr).
\end{equation}
Because of monotonicity of the solution of the heat equation in the
initial condition, one can deduce
\eqref{group3} by considering the solution of \eqref{disheat} at time
$t$ for both sides of \eqref{qpro}.

\subsection{Proof of Lemma \texorpdfstring{\protect\ref{cromostic}}{5.8}}\label{acromostic}

Inequality \eqref{letrucmoche} is obtained by integrating $\nu/\mu$
against the inequality \eqref{ared}.
We prove first \eqref{ared} for the conditioned law of the
semi-skeleton $\widehat\sigma$ [recall \eqref{semiskel}]
%
%
\begin{equation}
\label{deschapeaux} \widehat\mu( \cdot\vert c).
\end{equation}

Starting from the identity, we define $\sigma^1_t$ and $\sigma^2_t$
to be two AT shuffle dynamics for which the
transitions going out of $\mathcal{A}$ (resp., out of $\mathcal{B}
^c$) are canceled.
We couple the two dynamics using the graphical construction. Note that
the two Markov chains we have introduced are irreducible and
hence that their respective equilibrium measures are $\widehat\mu(
\cdot
\vert \mathcal{A})$ and
$\widehat\mu(\cdot\vert \mathcal{B} ^c)$.
We want to show that $\widehat\sigma^1_t\geq\widehat\sigma^2_t$
for all
times and then deduce \eqref{deschapeaux} from Lemma~\ref{limitlema}.

What there is to show is that the order is preserved each time that an
update is performed for either dynamics.
When an update is not censored by either dynamics, it preserves the
order as a consequence of the proof of Proposition~\ref{orderpreserv}.
Note also that as both events $\mathcal{A}$ and $\mathcal{B} ^c$ are
increasing; only
updates going down might be canceled.

It follows that the only thing to check is that if a down update is
censored for $\widehat\sigma^2$ but not for $\widehat\sigma^1$,
it cannot break monotonicity.
Let $z_{\min}(i,j)$ denote the smallest\vspace*{2pt} admissible value of $\bar
\sigma(i,j)$ which is larger or equal to $A\sqrt{k}$.
If the transition at $x_{i}$
is canceled for $\widehat\sigma^2$, say at time at time $t$, it
implies that
\[
\forall j\in\{1,\dots, K-1\},\qquad\widehat\sigma_{t}^2(x_i,j)
\leq z_{\min}(i,j),
\]
and if not, a single jump would not be sufficient to exit $\mathcal{B} ^c$.
By the definition of $\mathcal A$,
\[
\forall j\in\{1,\dots, K-1\},\qquad\widehat\sigma^1_{t}(x_i,j)
\geq z_{\min}(i,j).
\]

As the $\sigma(x,y)$, $x\ne x_{i}$ are not affected by the transition,
we have
$\widehat\sigma^1_{t}\geq\widehat\sigma^2_{t}$ provided $\widehat
\sigma
^1_{t^-}\geq\widehat\sigma^2_{t^-}$. This completes the proof of
\eqref
{deschapeaux}.

To prove the same stochastic domination with $\widehat\mu$ replaced by
$\mu$, we recall (from the proof of Proposition~\ref{projecmon}) that
if $f$ is increasing,
$\widehat f$ is increasing, defined by \eqref{hatf}, and thus for all
increasing $f$s,
%
%
\begin{equation}
\mu( f \vert\mathcal{A}) =\widehat\mu\bigl( \widehat f (\widehat
\sigma) \vert
\mathcal{A} \bigr)\geq\widehat\mu\bigl( \widehat f (\widehat
\sigma) \vert
\mathcal{B} ^c \bigr)=\mu\bigl( f ( \sigma) \vert\mathcal{B}
^c \bigr),
\end{equation}
which, according to Lemma~\ref{transport}, proves stochastic domination.

\section{Back to the original card shuffle}\label{appB}

As we wish to give the full answer to the question given in the
\hyperref[sec1]{Introduction}, we explain in this appendix how to
obtain the result in
discrete time.

We can use the tools we have developed in Section~\ref{monotool} to
compare the mixing time in discrete and continuous times.
We consider $(\sigma_n)_{n\geq0}$ the trajectory discrete Markov
chains described in the \hyperref[sec1]{Introduction}, and which can be described as
follows: we start from the identity
at each step, we chose a $x$ at random in $\{1,\dots,N-1\}$ and
perform an update at $x$.
Let $\mathbf{P} _n$ denote the law of $\sigma_n$.

The continuous time chain can be described in the following manner.
We consider $\mathcal{T} $ a Poisson point process with rate $2(N-1)$
($\mathcal{T}
_0=0$ and $\mathcal{T} _{n}-\mathcal{T} _{n-1}$, $n \geq1$ are
i.i.d. exponential
variables with mean $1/[2(N-1)]$)
which is independent, and set
%
%
\begin{equation}
\forall n\geq0\ \forall t \in[\mathcal{T} _n,\mathcal{T}
_{n+1}), \qquad\sigma'_t=
\sigma_n.
\end{equation}
Then $\sigma'_t$ is the continuous Markov chain with generator \eqref{defgen}.

Hence
%
%
\begin{equation}
\label{decomposture} P_t=\sum_{k=0}^\infty
\frac{(2t(N-1))^n e^{-2(N-1)t}}{k!} \mathbf{P} _n.
\end{equation}

From this we can prove the following result.

%
\begin{proposition}\label{contetdis}
We have for all $t$ and $n$,
%
%
\begin{equation}
\llVert\mathbf{P} _n-\mu\rrVert\leq\frac{\llVert P_t-\mu
\rrVert }{\sum_{k=0}^n ((2t(N-1))^k e^{-2(N-1)t})/k!},
\end{equation}
and
%
%
\begin{equation}
\llVert\mathbf{P} _n-\mu\rrVert\geq\frac{\llVert P_t-\mu
\rrVert -\sum_{k=0}^{n-1}
((2t(N-1))^k e^{-2(N-1)t})/k!}{\sum_{k=n}^\infty((2t(N-1))^k
e^{-2(N-1)t})/k!}.
\end{equation}
\end{proposition}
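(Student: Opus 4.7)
My plan is to exploit two ingredients: the Poissonization identity \eqref{decomposture} expressing $P_t$ as a convex combination $\sum_k p_k(t)\bP_k$ with weights $p_k(t):=e^{-2(N-1)t}(2(N-1)t)^k/k!$, together with the monotonicity machinery developed in Section \ref{monotool}. Since the chain starts at the maximum $\ind$ of $(S_N,\le)$, the Dirac mass $\delta_{\ind}$ is trivially an increasing probability, so Proposition \ref{proppres} ensures that $\bP_n/\mu$ is an increasing function on $S_N$ for every $n\ge 0$. Moreover, Lemma \ref{conservinc} gives $\nu\succeq \theta_x(\nu)$ whenever $\nu$ is increasing, and because $\bP_{n+1}$ is the uniform mixture of the $\theta_x(\bP_n)$ over $x\in\{1,\dots,N-1\}$, this yields the chain of stochastic dominations $\delta_{\ind}=\bP_0\succeq \bP_1\succeq \bP_2\succeq \cdots\succeq \mu$. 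In particular $n\mapsto\|\bP_n-\mu\|_{TV}$ is non-increasing, a fact I will use repeatedly.

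For the first inequality, I would introduce the witness set $A^*_n:=\{\sigma\in S_N\,:\,\bP_n(\sigma)\ge \mu(\sigma)\}$. Because $\bP_n/\mu$ is increasing, $A^*_n$ is an increasing subset of $S_N$, and by construction $\|\bP_n-\mu\|_{TV}=\bP_n(A^*_n)-\mu(A^*_n)$. Using the trivial bound $\|P_t-\mu\|_{TV}\ge P_t(A^*_n)-\mu(A^*_n)$ together with \eqref{decomposture}, I would get
\[
\|P_t-\mu\|_{TV}\ge \sum_{k=0}^\infty p_k(t)\bigl(\bP_k(A^*_n)-\mu(A^*_n)\bigr).
\]
For $k\le n$, the stochastic domination $\bP_k\succeq \bP_n$ tested against the increasing set $A^*_n$ (cf.\ Lemma \ref{transport}) gives $\bP_k(A^*_n)\ge \bP_n(A^*_n)$, hence $\bP_k(A^*_n)-\mu(A^*_n)\ge \|\bP_n-\mu\|_{TV}$; for $k>n$, the same argument with $\bP_k\succeq \mu$ merely gives nonnegativity. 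Dropping the tail and factoring out $\|\bP_n-\mu\|_{TV}$ delivers the first inequality.

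For the second inequality, I would take the opposite route: by convexity of the total variation norm applied to \eqref{decomposture},
\[
\|P_t-\mu\|_{TV}\le \sum_{k=0}^\infty p_k(t)\,\|\bP_k-\mu\|_{TV},
\]
and then split the sum at $k=n$. Bounding $\|\bP_k-\mu\|_{TV}\le 1$ for $k<n$ and using the monotonicity $\|\bP_k-\mu\|_{TV}\le \|\bP_n-\mu\|_{TV}$ for $k\ge n$ from the first paragraph, a straightforward rearrangement produces the stated bound. Neither half of the proposition presents a genuine obstacle; the only point requiring some care is identifying the correct witness $A^*_n$ and checking that it is increasing (where monotonicity of the density is essential), after which everything reduces to manipulating the Poisson weights.
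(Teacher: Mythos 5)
Your argument for the first inequality coincides with the paper's: same witness set $A^*_n=\{\sigma:\bP_n(\sigma)\ge\mu(\sigma)\}$ (increasing because $\bP_n/\mu$ is an increasing function), same use of the fact that $k\mapsto\bP_k(A^*_n)$ is non-increasing (from $\bP_k\succeq\bP_{k+1}$ tested against the increasing set $A^*_n$), and the same factorization after dropping the tail. For the second inequality you take a genuinely different, and somewhat cleaner, route. The paper works with the second witness set $A_2:=\{\sigma:P_t(\sigma)\ge\mu(\sigma)\}$, expands $P_t(A_2)-\mu(A_2)=\sum_k p_k(t)\bigl(\bP_k(A_2)-\mu(A_2)\bigr)$, bounds the summands by $1$ for $k<n$ and by $\bP_n(A_2)-\mu(A_2)\le\|\bP_n-\mu\|_{TV}$ for $k\ge n$. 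You instead apply the triangle inequality directly to the Poissonization identity, $\|P_t-\mu\|_{TV}\le\sum_k p_k(t)\|\bP_k-\mu\|_{TV}$, and then invoke the monotonicity of $k\mapsto\|\bP_k-\mu\|_{TV}$ (a consequence of $\bP_{k+1}\preceq\bP_k$, the increasing-density property of $\bP_{k+1}$, and Lemma \ref{increden}) to bound the terms with $k\ge n$ by $\|\bP_n-\mu\|_{TV}$. Both variants rest on the same monotonicity machinery (Lemma \ref{conservinc} plus Lemma \ref{increden}) and yield identical constants; yours has the minor conceptual advantage of reducing everything to a one-parameter family of total-variation distances rather than tracking a fixed event across all $k$. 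One small remark: the step ``$\bP_n\succeq\bP_{n+1}$ implies $\|\bP_n-\mu\|_{TV}\ge\|\bP_{n+1}-\mu\|_{TV}$'' is not automatic from stochastic domination alone; it requires Lemma \ref{increden} together with the increasing density of $\bP_{n+1}$, which you established in your first paragraph but should cite explicitly at that point.
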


\begin{pf}
Let us fix $t>0$ and $n\in\mathbb{N} $.
From Proposition~\ref{proppres} (which proof can easily adapt for
discrete time), note also that $\mathbf{P} _n$ is an increasing probability
for all $n$ (as is $P_t$) so that the events
%
%
\begin{eqnarray}
A_1&:=& \bigl\{\sigma\vert\mathbf{P} _n(
\sigma)\geq\mu(\sigma) \bigr\},
\nonumber
\\[-8pt]
\\[-8pt]
A_2&:=& \bigl\{\sigma\vert P_t(\sigma)\geq\mu(\sigma)
\bigr\},
\nonumber
\end{eqnarray}
are increasing events.
Recall that from the definition of the total variation distance,
\begin{eqnarray*}
\mathbf{P} _n(A_1)-\mu(A_1)&=&\llVert
\mathbf{P} _n-\mu\rrVert_{\mathrm{TV}}\quad\mbox{and}
\\
P_t(A_2)-\mu(A_2)&=&\llVert
P_t- \mu\rrVert_{\mathrm{TV}}.
\end{eqnarray*}

Now from Lemma~\ref{conservinc} (plus an average over the coordinate
which is updated), for any increasing event $A$,
$(\mathbf{P} _k(A))_{k\geq0}$ is a nonincreasing sequence tending to
$\mu
(A)$. Hence we have
%
%
\begin{eqnarray}
\llVert P_t-\mu\rrVert_{\mathrm{TV}}& \geq&
\bigl(P_t(A_1)-\mu(A_1) \bigr)
\nonumber
\\
&\stackrel{{\fontsize{8.36}{10}\selectfont{\eqref{decomposture}}}} {=}& \sum_{k=0}^\infty
\frac
{(2t(N-1))^n e^{-2(N-1)t}}{k!} \bigl(\mathbf{P} _n(A_1)-
\mu(A_1) \bigr)
\nonumber
\\
&\geq& \Biggl( \sum_{k=0}^{n}
\frac{(2t(N-1))^k e^{-2(N-1)t}}{k!} \Biggr) \bigl(\mathbf{P} _n(A_1)-
\mu(A_1) \bigr)
\\
& &{}+ \sum_{k=n+1}^{\infty}
\frac{(2t(N-1))^k e^{-2(N-1)t}}{k!} \bigl(\mathbf{P} _{k}(A_1)-
\mu(A_1) \bigr)
\nonumber
\\
&\geq& \Biggl( \sum_{k=0}^{n}
\frac{(2t(N-1))^k e^{-2(N-1)t}}{k!} \Biggr) \bigl\llVert\mathbf{P}
_n(A)-\mu\bigr
\rrVert_{\mathrm{TV}},
\nonumber
\end{eqnarray}
and
%
%
\begin{eqnarray}
\llVert P_t-\mu\rrVert_{\mathrm{TV}}&=&
\bigl(P_t(A_2)-\mu(A_2) \bigr)
\nonumber
\\
&\stackrel{{\fontsize{8.36}{10}\selectfont{\eqref{decomposture}}}} {=}& \sum_{k=0}^\infty
\frac
{(2t(N-1))^n e^{-2(N-1)t}}{k!} \mathbf{P} _n(A_2)-
\mu(A_2)
\nonumber
\\
&\leq&\sum_{k=0}^{n-1} \frac{(2t(N-1))^k e^{-2(N-1)t}}{k!}
\bigl(\mathbf{P} _k(A_2)-\mu(A_2) \bigr)
\\
&&{} + \Biggl( \sum_{k=n}^{\infty
}
\frac{(2t(N-1))^k e^{-2(N-1)t}}{k!} \Biggr) \bigl(\mathbf{P} _n(A_2)-
\mu(A_2) \bigr)
\nonumber
\\
& \leq& \Biggl(\sum_{k=0}^{n-1}
\frac{(2t(N-1))^k
e^{-2(N-1)t}}{k!} \Biggr)
\nonumber
\\
&&{} + \Biggl( \sum_{k=0}^{n}
\frac{(2t(N-1))^k e^{-2(N-1)t}}{k!} \Biggr) \bigl\llVert\mathbf{P}
_n(A)-\mu\bigr
\rrVert_{\mathrm{TV}},
\nonumber
\end{eqnarray}
which completes the proof.
\end{pf}

Now if we set
\[
\bTm^N(\varepsilon):=\inf\bigl\{ n \vert\llVert\mathbf{P}
_n-\mu\rrVert_{\mathrm{TV}}\leq\varepsilon\bigr\},
\]
Theorem~\ref{mainres} is equivalent to the following result.

%
\begin{theorem}\label{mainres2}
For the adjacent transposition shuffle, we have for every $\varepsilon
\in(0,1)$,
%
%
\begin{equation}
\lim_{N\to\infty} \frac{\pi^2\bTm^N(\varepsilon)}{N^3\log N}=1.
\end{equation}
\end{theorem}

\begin{pf}
We use the previous proposition for $t=\frac{n\pm n^{1/3}}{2(N-1)}$,
and we have
%
%
\begin{eqnarray}
\qquad\llVert P_{(n+n^{1/3})/(2(N-1))}-\mu\rrVert_{\mathrm{TV}}
+o(1) &\leq&\llVert
\mathbf{P}_n-\mu\rrVert
\nonumber
\\[-8pt]
\\[-8pt]
&\leq&\llVert P_{(n-n^{1/3})/(2(N-1))}-\mu\rrVert_{\mathrm{TV}}+o(1).
\nonumber
\end{eqnarray}
It is then easy to conclude.
\end{pf}
\end{appendix}

%
\section*{Acknowledgment}
The author is grateful to J. Lehec for
enlightening discussions.


%
%

\printaddresses
\end{document}